\documentclass{scrartcl}
\usepackage[utf8]{inputenc}
\usepackage{amsfonts,amssymb, amsmath, amsthm}
\usepackage{mathrsfs}
\usepackage{stmaryrd} %
\usepackage{xcolor}
\usepackage{xparse}
\usepackage{enumitem}

\usepackage{tikz}
\usepackage{tikz-cd}
\usetikzlibrary{cd}

\usepackage{hyperref}

\hypersetup{     %
    colorlinks   = true,
    urlcolor     = green!60!black,
    linkcolor    = green!60!black,
    citecolor    = green!60!black,
    linktocpage  = true,
    breaklinks   = true,
    unicode
}

\setcounter{secnumdepth}{4}
\counterwithin{paragraph}{subsection}
\counterwithin{equation}{subsection}

\theoremstyle{plain}
\newtheorem{lemma}{Lemma}[section]
\newtheorem{proposition}[lemma]{Proposition}
\newtheorem{corollary}[lemma]{Corollary}

\newtheorem{theorem}[lemma]{Theorem}

\theoremstyle{definition}
\newtheorem{remark}[lemma]{Remark}

\newtheorem{example}[lemma]{Example}

\newenvironment{axioms}
    {\begin{enumerate}[labelindent=3em, leftmargin=*, itemsep=0.8em, topsep=1em]}
    {\end{enumerate}}

\makeatletter
\def\namedlabel#1#2{\begingroup%
  #2%
  \def\@currentlabel{#2}%
  \phantomsection\label{#1}\endgroup
}
\makeatother

\newcommand\Ps{\raisebox{.17\baselineskip}{\Large\ensuremath{\wp}}} %
\newcommand{\inv}{^{-1}}
\newcommand\sue{\subseteq}
\newcommand{\op}{^{op}}
\newcommand{\sm}{{\setminus}}  %

\newcommand\two{\mathbf{2}}

\newcommand\mee{\wedge}
\newcommand\bigmee{\bigwedge}
\newcommand\bwe{\bigwedge}
\newcommand\bve{\bigvee}
\newcommand\bca{\bigcap}

\NewDocumentCommand{\bigfitvee}{ O{L} O{} }{{\textstyle\bigvee^{\So(#1)}_{#2}\,}}

\newcommand\upset  {\ensuremath{\mathord{\uparrow}\mkern1mu}}

\newcommand{\Up}{\mathsf{Up}}
\newcommand{\Idl}{\mathsf{Idl}}
\newcommand{\Fi}{\mathsf{Filt}}
\newcommand\sqleq{\sqsubseteq}

\newcommand{\clOp}{\mathsf{cl}}
\newcommand{\inOp}{\mathsf{int}}
\newcommand{\J}{\mathcal{J}}    %
\newcommand{\M}{\mathcal{M}}    %
\newcommand{\fix}{\mathsf{fix}}

\newcommand\diff{\mathbin{\raisebox{0.08em}{$\smallsetminus$}}}

\newcommand\GC{\ensuremath{\mathcal{G}}}

\newcommand\xe[1][X]{i_{#1}}  %
\newcommand\ye[1][Y]{i_{#1}}  %

\newcommand\femb[1][]{\kappa_{#1}}
\newcommand\fembF{\femb[\F]}
\newcommand\lemb[1][]{e_{#1}}
\newcommand\lembF{\lemb[\F]}

\newcommand\ce{^\delta} %
\newcommand\fe{^\F}     %

\newcommand{\F}{\mathcal{F}}    %
\newcommand{\Int}{\J}  %
\newcommand{\FJ}{\mathcal{FJ}}  %

\newcommand{\pt}{\mathsf{pt}}

\newcommand{\B}{\mathfrak{B}}   %

\newcommand{\bs}{\mathfrak{b}}
\newcommand{\os}{\mathfrak{o}}  %
\newcommand{\cs}{\mathfrak{c}}  %
  \newcommand{\of}{\mathfrak{of}}  %
  \newcommand{\cf}{\mathfrak{cf}}  %
\newcommand{\spa}{\mathsf{sp}}
\newcommand\Sl{\mathcal{S}}
\newcommand\SL{\Sl(L)}
    \newcommand\Emp{\mathsf{O}}
    \newcommand\Fll{L}

\newcommand\So{\Sl_o}
    \newcommand\Sb{\Sl_b}
    \newcommand\Sc{\Sl_c}
    \newcommand\Sk{\Sl_k}
        \newcommand\Ssp{\Sl_{sp}}

\newcommand\Slc{\Sl_{lc}}
\newcommand\Sco{\Sl_{co}}
\newcommand\Sop{\Sl_{*}}

\newcommand\fit{\mathtt{fit}}  %

\newcommand\SE{\mathsf{SE}}          %
    \newcommand\Ex{\mathsf{E}}      %
        \newcommand\Cl{\mathsf{C}}  %
        \newcommand\R{\mathsf{R}}    %
    \newcommand\SO{\mathsf{SO}}      %
        \newcommand\CP{\mathsf{CP}}  %

\newcommand\LCl{{\mathsf{L}\Cl}}  %
\newcommand\Pri{\mathsf{Pri}}  %

\newcommand\spp{^\#}  %
\newcommand\psc{^*}   %

\newcommand\con{\mathsf{con}}
\newcommand\tot{\mathsf{tot}}

\newcommand\stf{\mathsf{fi}}  %
\newcommand\fts{\mathsf{su}}  %
\newcommand\ftsC{J}  %

\newcommand\ARG{\,\text{-}\,}

\newcommand\df[1]{\emph{#1}}

\newcommand\ee[1]{\enspace #1 \enspace}
\newcommand\ete[1]{\enspace\text{#1}\enspace}
\newcommand\qtq[1]{\quad\text{#1}\quad}

\newcommand\TOCITE[1][]{{\color{blue}[00]}}

\begin{document}

\title{Canonical extensions via fitted sublocales}
\author{Tomáš Jakl \and Anna Laura Suarez}
\maketitle

\begin{abstract}
    \emph{Abstract.}
     We build on a result stating that the frame $\SE(L)$ of \emph{strongly exact} filters for a frame $L$ is anti-isomorphic to the coframe $\So(L)$ of fitted sublocales. The collection $\Ex(L)$ of \emph{exact} filters of $L$ is known to be a sublocale of this frame. We consider several other subcollections of $\SE(L)$: the collections $\J(\CP(L))$ and $\J(\SO(L))$ of intersections of completely prime and Scott-open filters, respectively, and the collection $\R(L)$ of regular elements of the frame of filters. We show that all of these are sublocales of $\SE(L)$, and as such they correspond to subcolocales of $\So(L)$, which all turn out to have a concise description. By using the theory of polarities of Birkhoff, one can show that all of the structures mentioned above enjoy universal properties which are variations of that of the canonical extension. We also show how some of these subcollections can be described as polarities and give three new equivalent definitions of subfitness in terms of the lattice of filters.
\end{abstract}

\par\noindent\rule{\textwidth}{0.4pt}
\vspace{-2.3em}
\tableofcontents
\par\noindent\rule{\textwidth}{0.4pt}

\section{Introduction}

The paper \cite{jakl20} made the first exploratory steps in canonical extensions of frames. Since then there has been a lot of development of different types of frame extensions which, crucially, were based on complete lattices of sublocales build from closed \cite{picado2019Sc}, open \cite{moshier2020exact} or complemented \cite{arrieta2021complemented} sublocales.
In this paper we revisit the approach of \cite{jakl20} in order to provide an organised manner of explaining and extending these recent results from the point of view of canonical extensions.

Canonical extension is a type of completion of ordered structures, such as Boolean algebras, distributive lattices, and posets which retains many properties of the original structure. Canonical extensions were originally constructed by Jonss\'on and Tarski for Boolean algebras \cite{jonssontarski1951boolean,jonssontarski1952boolean}, to give a topological semantics to modal logic. Over the years their universal properties were identified and the construction was freed of the use of Stone duality~\cite{gehrke2001bounded,gehrke2004bounded}.
Canonical extensions are invaluable when extending Stone duality (and its variant) to algebras with additional operators, such as modalities.
For an overview, we refer the reader to \cite{gehrke2018canonical} and to the upcoming book \cite{GFbook}. In \cite{jakl20}, canonical extensions are studied for locally compact frames, thus extending the construction to the setting of pointfree topology. An application of the theory of \emph{polarities} by Birkhoff \cite{birkhoff48} to frames enables us to systematically construct, for any frame $L$, a concrete structure which enjoys a variation of the properties \emph{density} and \emph{compactness} characterizing the canonical extension construction. This structures is realized as the collection of intersections of Scott-open filters of $L$.

On the other hand, in frame theory, the collection \(\Sl(L)\) of sublocales (i.e.\ pointfree subspaces) has been viewed for many years as a suitable discretisation of the original frame \(L\). However, recently a few sublattices of \(\Sl(L)\) have been studied that often have many desirable properties that \(\Sl(L)\) fails to have. Probably the most prominent examples are \(\Sc(L)\) and \(\So(L)\), the lattices of joins of closed sublocales and fitted sublocales, respectively, see e.g.~\cite{picado2017boolean}, \cite{moshier2020exact}.

\medskip

In this paper, we show that these and other collections of sublocales are all variations of the canonical extension construction. Our starting point is a result in \cite{moshier2020exact} which provides a useful link between fitted sublocales and filters. There, it is shown that \(\So(L)\) is isomorphic to the opposite of the frame \(\SE(L)\) of the so-called strongly exact filters. In this paper, we add to the picture the following collections.
\begin{itemize}
    \item The collection \(\Ex(L)\) of \emph{exact} filters;
    \item The collection $\R(L)$ of \emph{regular} filters, i.e.\ the regular elements of the frame $\Fi(L)$;
    \item The collection $\CP(L)$ of completely prime filters of $L$;
    \item The collection $\SO(L)$ of Scott-open filters of $L$.
\end{itemize}

We come to proving that we have for every frame $L$ the following poset of sublocale inclusions, where $\J$ denotes the closure under arbitrary intersections.

\begin{equation}
\begin{tikzcd}[row sep=1em, column sep=1.5em]
    \R(L) \rar[hook] & \Ex(L) \ar[hook]{rd} \\
        && \SE(L) \rar[hook] & \Fi(L) \\
    \J(\CP(L)) \rar[hook] & \J(\SO(L)) \ar[hook]{ru}
\end{tikzcd}
\label{eq:fi-inc}
\end{equation}

We use the correspondence from \cite{moshier2020exact} and identify the distinguished classes of fitted sublocales that these classes of filters correspond to. It turns out that all of these structures are variations of the canonical extension construction, as explained above. Particularly nice is the case where $L$ is a \emph{fit} frame; for such frames the poset above gives the poset of subcolocale inclusions

\begin{equation}
\begin{tikzcd}[row sep=1em, column sep=1.5em]
 & \mathcal{S}_{c}(L) =\mathcal{S}_b(L)\ar[hook]{rd} \\
        && \mathcal{S}_{o}(L)  \\
    \mathcal{S}_{sp}(L) \rar[hook] & \mathcal{S}_{k}(L)  \ar[hook]{ru}
\end{tikzcd}
\label{eq:sl-inc}
\end{equation}
where $\mathcal{S}_b(L)$ is the Booleanization of \(\Sl(L)\), studied in \cite{arrieta2021complemented}, $\mathcal{S}_{sp}(L)$ is the collection of spatial sublocales, and $\mathcal{S}_{k}(L)$ that of joins of compact sublocales.

Lastly, we show that the theory of polarities often gives very concise abstract descriptions of the structures above, which in some cases does not mention the notion of sublocale.
Thereby we arrive at three new equivalent definitions of subfitness in terms of filters, see Propositions~\ref{p:sfre}, \ref{p:subfit-joins-closed}, and \ref{p:open-closed-surprises}, describe each of the given classes of sublocales uniformly by a simple universal property and, also, characterise preservation of distinguished meets for embeddings into the extensions.

\section{Frame theory preliminaries}

Our main reference is the book \cite{pp2012book} (or the briefer and more recent \cite{pp2021notes}). We will be strictly using the following notation throughout the paper. For a function \(f\colon X\to Y\), we write \(f[A] = \{ f(a) \mid a\in A\}\) for the forward image of the subset \(A\sue X\) and, similarly, we write \(f\inv[B] = \{ x\in X \mid f(x) \in B\}\) for the preimage of \(B\sue Y\).

Further, by a \df{filter} \(F\) of a lattice \(L\) we mean a non-empty upwards closed subset \(F \sue L\) which is closed under finite meets. We say that \(F\) is \df{proper} when \(F \not= L\) and \df{principal} if \(F = \upset a = \{ x \mid a \leq x \}\) for some \(a\).

\subsection{Frames}
\label{s:frm-basics}

A \df{frame} is a complete lattice \(L\) satisfying the following distributivity law.
\[
    (\forall A\sue L, b\in L)
    \qquad
    (\bigvee A) \mee b
    = \bigvee \{ a \mee b \mid a \in A \}
\]
This law implies that every frame is also a Heyting algebra, with the Heyting implication obtained as the right adjoint to meets, i.e.\ \(a \mee b \leq c\) iff \(a \leq b \to c\) for any \(a,b,c\in L\).
However, \df{frame homomorphisms} are only required to preserve infinite suprema and finite infima, including \(0\) and \(1\), respectively. Heyting implication is typically not preserved.

\df{Points} of a frame \(L\) can be defined in three equivalent ways. They are given as
\begin{itemize}
    \item frame homomorphisms \(L \to \two\), where \(\two = (0 < 1)\) is the two-element frame,
    \item completely prime filters \(P \sue L\), that is, proper filters on \(L\) which satisfy, for any \(A\sue L\), that \(\bigvee A \in P\) implies that \(a\in P\) for some \(a\in A\), or
    \item prime elements \(p \in L\), that is, elements such that \(x \mee y \leq p\) implies \(x\leq p\) or \(y\leq p\).
\end{itemize}
Every completely prime filter \(P\) uniquely determines a frame homomorphism \(L \to \two\) as the characteristic function of \(P\) and, on the other hand, the prime element \(p\) corresponding to \(P\) is obtained by the join \(\bigvee \{a\in L \mid a\notin P\}\).

The set of points \(\pt(L)\) of \(L\) can be endowed with a topology consisting of opens of the form \(\{ P \mid a \in P\}\) for some \(a\in L\). Conversely, given a topological space \(X = (X,\tau)\), its lattice of opens \(\Omega(X) = \tau\) ordered by set inclusion is a frame.
Frames for which \(\Omega(\pt(L)) \cong L\) are called \df{spatial}. Conversely, spaces for which \(\pt(\Omega(X))\) is homeomorphic to \(X\) are called \df{sober}.

\subsection{Locales and sublocales}

The category of frames and frame homomorphisms is dual to the category of topological spaces in the sense that the operations \(\Omega(\ARG)\) and \(\pt(\ARG)\) extend to contravariant adjoint functors.
The geometric point of view is retained if, instead, we work with the category of \df{locales}. Objects of this category are still frames (usually referred to as locales) and morphisms are \df{localic maps}, that is, maps \(f\colon L \to M\) such that their (unique) left adjoint is a frame homomorphism \(h\colon M \to L\): we have \(h(a) \leq b\) iff \(a \leq f(b)\) for \(a\in M, b\in L\).

The generalisation of the notion of a subspace of a space is that of a \df{sublocale}. Similarly to subspaces, sublocales correspond to images of injective localic maps. Equivalently, these are subsets \(S\sue L\) such that
\begin{axioms}
    \item[(S1)] \(S\) is closed under all meets in \(L\), and
    \item[(S2)] for every \(s\in S\) and \(a\in L\), \(a \to s \in S\).
\end{axioms}
The collection \(\Sl(L)\) of all sublocales of \(L\) is a complete lattice, with meets \(\bigwedge_i S_i\) given by intersections \(\bigcap_i S_i\) and joins \(\bigvee_i S_i\) given by \(\{\bigmee A \mid A \sue \bigcup_i S_i\}\).

Every \(a\in L\), induces the \df{open sublocale} \(\os(a)\) corresponding to \(a\) and, its complement in~\(\Sl(L)\), the \df{closed sublocale} \(\cs(a)\). Being defined by
\[
    \os(a) = \{ a \to b \mid b\ \in L\} = \{ x \in L \mid a \to x = x \}
    \qtq{and}
    \cs(a) = \upset a,
\]
they exhibit a lot of expected properties of open and closed subspaces, for example:
\begin{align*}
    \os(1) = \Fll
    &\qquad
    \os(0) = \Emp
    &
    \textstyle
    \bigvee_i \os(a_i) = \os(\bigvee_i a_i)
    &\qquad
    \os(a) \cap \os(b) = \os(a \mee b)
    \\
    \cs(1) = \Emp
    &\qquad
    \cs(0) = \Fll
    &
    \textstyle
    \bigcap_i \cs(a_i) = \cs(\bigvee_i a_i)
    &\qquad
    \cs(a) \vee \cs(b) = \cs(a \mee b)
\end{align*}
Here \(\Emp = \{1\}\) and
\(\Fll\)
represent the smallest and largest sublocales of \(L\), respectively. This means that \(\cs(a) \cap \os(a) = \Emp\) and \(\cs(a) \vee \os(a) = \Fll\).

\subsection{Coframes, subcolocales, and supplements}

We also make use of notions dual to those introduced above. For example, we say that a poset \(C\) is a \df{coframe} (or \df{colocale}) if the same poset \(C\op\) but in the opposite order is a frame. Similarly, \(S \sue C\) is a \df{subcolocale} if \(S\op\) is a sublocale of \(C\op\).

The coHeyting implication in a coframe \(C\), called \df{difference} and denoted \(a \diff b\), satisfies
\(a \diff b \leq c\) iff \(a \leq b \vee c\).
Then, similarly to the \df{pseudocomplement}
\[
    a^* = a \to 0
\]
of an \(a\) in a frame, the dual notion is called \df{supplement} of an \(a\) in a coframe and is calculated as
\[a\spp = 1 \diff a.\]
An important example of a coframe is \(\Sl(L)\).

\subsection{Filters of frames}
\label{s:filt}

In this text we aim to prioritise the geometric point of view on filters and we think of a filter \(F\) of \(L\) as a representative for the sublocale corresponding to the intersection
\begin{align}
    \fts(F) =
    \bigcap_{a\in F} \os(a)
    \label{eq:fi-to-sl}
\end{align}
in \(\Sl(L)\). Therefore, unless said otherwise, we always order the collection of all filters \(\Fi(L)\) of a frame \(L\) by the reverse inclusion order. To emphasise this, we denote the filter order by \(\sqleq\). Under this order the correspondence from \eqref{eq:fi-to-sl} gives a monotone mapping from filters to sublocales:
\[
    F \sqleq G \ee\implies \fts(F) \sue \fts(G)
\]
Then, \(\Fi(L)\) is a complete lattice with joins and binary meets computed by:
\[
    \bigsqcup A = \bigcap A
    \qtq{and}
    F \sqcap G = \{ f \mee g \mid f\in F\, g\in G\}
\]
In fact, \(\Fi(L)\) is a coframe, with the \df{difference} given as follows.
\[
    H \diff G = \{ a\in L \mid \forall b\in G.\ b \vee a \in H\}
\]
Indeed, \(H \diff G \sqleq F\) iff \(H \sqleq F \sqcup G\), see e.g.\ Section~5.1 in \cite{moshier2020exact}.

\paragraph{Distinguished classes filters.}
\label{par:filt-cls}
In this paper we are looking at various classical classes of filters on a frame. Namely, we consider
\begin{itemize}
    \item \df{completely prime filters} \(\CP(L)\), introduced already in Section~\ref{s:frm-basics},
    \item \df{Scott-open filters} \(\SO(L)\), that is, filters \(F\) such that for any directed\footnote{Recall that a set \(A\) is directed if for any \(a,b\in A\) there is some \(c\in A\) such that \(a\leq c\) and \(b\leq c\).} \(A\sue L\), if \(\bigvee A \in F\) then \(a\in F\) for some \(a\in A\),
    \item \df{exact filters} \(\Ex(L)\), that is, filters \(F\) closed under \df{exact meets} which are the meets \(\bigmee M\) for some \(M\sue L\) such that, for any \(b\), \((\bigmee M) \vee b = \bigmee_{a\in M} (a \vee b)\), and
    \item \df{strongly exact filters} \(\SE(L)\), that is, filters \(F\) closed under \df{strongly exact meets}, which are the meets \(\bigmee M\) for some \(M\sue L\) such that \(\bigcap_{a\in M} \os(a) = \os(\bigmee M)\).
\end{itemize}

We view all classes of filters as subposets of \(\Fi(L)\),  in the \(\sqleq\) order.
Crucially, Theorem~4.5 in \cite{moshier2020exact} characterises strongly exact filters also in the following way.
\begin{lemma}
    A filter \(F\sue L\) is strongly exact if and only if, for any \(b\in L\),
    \begin{equation}
        \bigcap_{a\in F} \os(a) \sue \os(b)
        \implies
        b\in F
        \label{eq:se-fi}
    \end{equation}
    \label{l:SE-char}
\end{lemma}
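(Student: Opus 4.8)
The plan is to prove the two implications separately, reading the condition \eqref{eq:se-fi} as the assertion that $F$ coincides with $\{b\in L \mid \fts(F)\sue\os(b)\}$, i.e.\ that $F$ collects exactly the opens containing the sublocale $\fts(F)=\bigcap_{a\in F}\os(a)$.

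The implication \eqref{eq:se-fi}~$\Rightarrow$~strongly exact is immediate from monotonicity of intersection. Suppose \eqref{eq:se-fi} holds and let $\bigmee M$ be a strongly exact meet with $M\sue F$, so $\bigcap_{a\in M}\os(a)=\os(\bigmee M)$. Since $M\sue F$, enlarging the index set shrinks the intersection, whence $\fts(F)=\bigcap_{a\in F}\os(a)\sue\bigcap_{a\in M}\os(a)=\os(\bigmee M)$; applying \eqref{eq:se-fi} with $b=\bigmee M$ gives $\bigmee M\in F$. Thus $F$ is closed under strongly exact meets.

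For the converse I would fix $b$ with $\fts(F)\sue\os(b)$ and manufacture $b$ itself as a strongly exact meet of elements of $F$. The candidate is $M=\{\,a\mee b \mid a\in F\,\}$, which lies in $F$ because each $a\mee b\geq a\in F$ and $F$ is upward closed. The crucial computation is $\bigcap_{a\in F}\os(a\mee b)=\os(b)$: using that $\os(\ARG)$ sends binary joins of $L$ to joins in $\Sl(L)$ and that $\Sl(L)$ is a coframe (so a finite join distributes over an arbitrary meet), I get $\bigcap_{a\in F}\os(a\mee b)=\os(b)\vee\bigcap_{a\in F}\os(a)=\os(b)\vee\fts(F)$, and this equals $\os(b)$ exactly because $\fts(F)\sue\os(b)$.

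It then remains to identify $\bigmee M$ with $b$ and to confirm strong exactness of this meet. On one hand $b\leq\bigmee M$ since $b\leq a\mee b$ for every $a$; on the other hand the always-valid inclusion $\os(\bigmee M)\sue\bigcap_{x\in M}\os(x)=\os(b)$ forces $\bigmee M\leq b$, because $\os(\ARG)$ is an order-embedding (open sublocales form an isomorphic copy of $L$; this also follows by complementation from $\cs(a)=\upset a$). Hence $\bigmee M=b$ and $\bigcap_{x\in M}\os(x)=\os(b)=\os(\bigmee M)$, so $\bigmee M$ is a strongly exact meet of a subset of $F$; closure of $F$ under such meets yields $b=\bigmee M\in F$, which is \eqref{eq:se-fi}. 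The only non-routine step is the choice of $M=\{a\mee b\mid a\in F\}$ together with the coframe distributivity of $\Sl(L)$, which is precisely what converts the hypothesis $\fts(F)\sue\os(b)$ into a strongly exact presentation of $b$; taking instead $M=F$ would fail, since $\fts(F)$ need not be open. Everything after that choice is bookkeeping.
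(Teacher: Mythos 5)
Note first that the paper does not actually prove this lemma: it is imported as Theorem~4.5 of \cite{moshier2020exact}, so your attempt has to be judged on its own merits rather than against a proof in the text. Your overall strategy is the right one, and it is the natural (presumably the standard) argument: the forward implication is indeed immediate from monotonicity of intersection, and for the converse the key move is to exhibit \(b\) as a strongly exact meet of elements of \(F\), namely of the family \(\{a \vee b \mid a \in F\}\), using that \(\Sl(L)\) is a coframe to compute \(\bigcap_{a\in F}\os(a\vee b) = \os(b) \vee \fts(F) = \os(b)\), and that \(\os(\ARG)\) is an order-embedding to identify the meet with \(b\).

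However, as literally written your converse direction contains a systematic notational error: you write \(a \mee b\) (meet) throughout where you must mean \(a \vee b\) (join). With meets the argument collapses at the first step: \(M = \{a \mee b \mid a\in F\}\) is \emph{not} contained in \(F\) (since \(a \mee b \leq a\), upward closure gives nothing; in fact \(a\mee b \in F\) would already yield \(b \in F\), which is what you are trying to prove), the claim ``\(a \mee b \geq a\)'' is false, the rewriting of \(\os(a\mee b)\) via ``\(\os\) sends binary joins to joins'' does not apply, and ``\(b \leq a \mee b\)'' fails. Every one of these justifications is correct for \(\vee\), and your appeal to coframe distributivity only makes sense for \(\vee\), so this is clearly a consistent typo rather than a conceptual gap: replace \(\mee\) by \(\vee\) in the definition of \(M\) and in the subsequent computations, and the proof is complete and correct.
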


\section{Canonical extensions and polarities}

\subsection{Canonical extensions}

The motivation for our work comes from canonical extensions of (bounded) distributive lattices. Given such lattice \(D\), its canonical extension is an embedding \(e\colon D \hookrightarrow D\ce\) into a complete lattice \(D\ce\) such that:
\begin{axioms}
    \item[(D)] Every element of \(D\ce\) is a join of meets and a meet of joins of elements in the image~\(e[D]\).
    \item[(C)] If $\bigmee e[F] \leq \bigvee e[I]$ for some filter $F\sue D$ and ideal $I\sue D$, then $F \cap I \not= \emptyset$.
\end{axioms}

The existence, unicity and universal properties of \(D \hookrightarrow D\ce\) follows from the theory of polarities which we discuss in Section~\ref{s:polarities} below.

\paragraph{Canonical extensions of frames.}
\label{par:can-frm}
The elements of the form \(\bigmee e[F]\) and \(\bigvee e[I]\), for a filter \(F\sue D\) and ideal \(I\sue D\), play an important role in the definition above. In fact, axiom (D) equivalently says that every element of \(D\ce\) is both a join of elements of the form \(\bigmee e[F]\) and a meet elements of the form~\(\bigvee e[I]\).

Recall, from Stone duality for distributive lattices, that ideals correspond to opens of the spectral space \(X\) dual to \(D\), i.e.\ \(\Idl(D) \cong \Omega(X)\) \cite{priestley1984ordered}. Similarly, filters correspond to compact saturated subsets of \(X\). However, by the Hofmann--Mislove Theorem~\cite{hofmann2006local}, compact saturated subsets are in a correspondence with Scott-open filters on the frame~\(\Omega(X)\), that is, filters \(F\) such that for any directed family \(\{a_i\}_i\) such that \(\bigvee_i^\uparrow a_i \in F\) there is some \(i\) such that \(a_i \in F\).

This inspired the frame-theoretic definition of canonical extensions of frames in \cite{jakl20}. Given a frame \(L\), we define its canonical extension to be a monotone mapping \(e\colon L \to L^\SO\) such that:
\begin{axioms}
    \item[\namedlabel{ax:dso}{(D\({}^\SO\))}] Every element of $L^\SO$ is a join of elements of the form $\bigmee e[F]$ for some Scott-open filter \(F\sue L\) and a meet of elements of the form \(e(a)\) for some \(a\in L\).

    \item[\namedlabel{ax:cso}{(C\({}^\SO\))}] If $\bigmee e[F] \leq e(a)$ for some Scott-open filter $F \sue L$ and $a\in L$, then $a\in F$.
\end{axioms}

Observe that by the above discussion we have that \(D\ce \cong \Idl(D)^\SO\) and so canonical extensions of frames generalises canonical extensions for distributive lattices. Similarly to distributive lattices, the canonical extension \(L \to L^\SO\) always exists and is unique up to isomorphism. In case when \(L = \Omega(X)\) for some space \(X\), \(e\) is simply the embedding \(\Omega(X) \hookrightarrow \Up(X)\) where \(\Up(X)\) is the poset of saturated subsets, that is, upsets in the specialization order (cf.\ Example 3.5 in \cite{jakl20}).

\begin{remark}
    In the above discussion we omitted an important part of the theory which concerns with extensions of (not necessarily monotone) maps between distributive lattices \(D \to E\) to maps between their canonical extensions \(D\ce \to E\ce\). The theory of these extensions is what makes canonical extensions useful for semantics. Similar extensions theorems can be proved for canonical extensions of frames as well, cf.\ \cite{jakl20}.
\end{remark}

\subsection{Polarities}
\label{s:polarities}

The theory of polarities goes back all the way to Birkhoff~\cite[Chapter V]{birkhoff48}. It can be used for presenting different types of completions of posets known from the literature. We use polarities for generalisations of canonical extensions, and showing their existence, uniqueness and universal properties.
In this section we mostly follow the description from~\cite{gehrke06}.

\paragraph{}
\label{p:polarity}
A \df{polarity} is a tuple \(P=(X,Y,Z)\) where $Z \sue X\times Y$ is a relation between sets $X$ and $Y$. This data induces a pair of antitone functions between the powersets on \(X\) and \(Y\) (both taken with the subset orders):
\begin{align*}
    p\colon \Ps(X) \to \Ps(Y), \quad M \mapsto \{ y\in Y ~|~ \forall x\in M.\ x Z y\} \\
    q\colon \Ps(Y) \to \Ps(X), \quad N \mapsto \{ x\in X ~|~ \forall y\in N.\ x Z y\}
\end{align*}
Note that these maps are adjoint to each other, since $N \sue p(M)$ iff $M \sue q(N)$. Therefore, $q \circ p$ is a closure operator\footnote{By a \df{closure operator} we mean a monotone mapping \(\delta \colon P \to P\) on a poset \(P\) such that \(a \leq \delta(a)\) and \(\delta(\delta(a)) = \delta(a)\) for any \(a\in P\).} on the complete Boolean algebra $\Ps(X)$ and, dually, \(p \circ q\) is a closure operator on \(\Ps(Y)\).

Recall that any closure operator \(\delta\colon C \to C\) on a complete lattice \(C\) induces a complete lattice of fixpoints \(\fix(\delta) = \{ a \in C \mid \delta(a) = a\}\), with the order derived from \(C\). Furthermore, joins and meets in \(\fix(\delta)\) are computed in \(C\) as follows.
\begin{align}
    \textstyle
    \bigmee^{\fix(\delta)} A \ee{=} \bigmee^C A
    \qtq{and}
    \bigvee^{\fix(\delta)} A \ee{=} \delta(\bigvee^C A)
    \label{eq:fix-cl}
\end{align}

\paragraph{}
\label{par:FP-def}
For a polarity \(P\), define $\GC(P)$ to be the complete lattice of Galois closed sets of \(q \circ p\), that is, set
\[
    \GC(P) = \fix(q\circ p) = \{ M\in \Ps(X) \mid q(p(M)) = M\}.
\]
By \eqref{eq:fix-cl}, joins and meets in $\GC(P)$ are computed in \(\Ps(X)\) as 
\(\bigvee A = qp(\bigcup A)\) and \(\bigmee A = \bigcap A\), respectively.

The embeddings of singletons, $X \to \Ps(X)$ and $Y \to \Ps(Y)$, induces two maps into the lattice of Galois closed sets:
\[ \xe\colon X\to \GC(P),\ x\mapsto qp(\{x\}),
   \ete{and}
   \ye\colon Y\to \GC(P),\ y\mapsto q(\{y\}).
\]
A fundamental property of polarities is that these maps can be used to give an abstract characterisation of $\GC(P)$.

\begin{theorem}\label{t:polarities}
    Let $P = (X, Y, Z)$ be a polarity. Then the complete lattice $C = \GC(P)$ has the following properties.
    \begin{enumerate}
        \item For any $u \in C$,
        \[
            u = \bigvee \{ \xe(x) ~|~ x\in X,\ \xe(x)\leq u\}
            \ete{and}
            u = \bigmee \{ \ye(y) ~|~ y\in Y,\ u \leq \ye(y)\}.
        \]
        \item For any $x\in X$ and $y\in Y$, $\xe(x) \leq \ye(y)$ iff $x Z y$.
        \item For any $\xe'\colon X \to C'$ and $\ye'\colon Y \to C'$ also satisfying the conditions (1) and (2) there is a unique complete lattice isomorphism $\iota\colon C'\to C$ such that $\iota \circ \xe' = \xe$ and $\iota \circ \ye' = \ye$.
    \end{enumerate}
\end{theorem}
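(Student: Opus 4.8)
The proof rests on three standard facts about the Galois connection $(p,q)$: that $q\circ p$ and $p\circ q$ are closure operators, that $p\circ q\circ p = p$ and $q\circ p\circ q = q$, and that each $\xe(x) = qp(\{x\})$ and each $\ye(y) = q(\{y\})$ is Galois closed. A convenient lemma to record first is the pair of equivalences
\[
    \xe(x) \leq u \iff x\in u
    \qquad\text{and}\qquad
    u \leq \ye(y) \iff y\in p(u),
\]
valid for any $u\in C$: the first holds because $qp$ is extensive (giving $\Rightarrow$) and monotone on the closed set $u$ (giving $\Leftarrow$), and the second simply unwinds $u\subseteq q(\{y\})$.

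For part~(1), the plan is to unfold Galois closedness. Since $q$ turns unions into intersections, $u = q(p(u)) = \bigcap_{y\in p(u)} q(\{y\}) = \bigcap_{y\in p(u)} \ye(y)$, which is precisely the meet in $\GC(P)$; rewriting $y\in p(u)$ as $u\leq\ye(y)$ yields the meet representation. For the join representation, the lemma gives $\bigcup\{\xe(x)\mid \xe(x)\leq u\} = u$ as subsets of $X$, so applying the closure $qp$ (which computes joins in $\GC(P)$) returns $qp(u) = u$. Part~(2) is then immediate: applying the lemma with the Galois-closed set $u=\ye(y)$ gives $\xe(x)\leq\ye(y) \iff x\in q(\{y\}) \iff x\mathrel{Z}y$.

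For part~(3), given $\xe',\ye'$ into $C'$ satisfying (1) and~(2), I would define $\iota\colon C'\to C$ by $\iota(u') = \bigvee^{C}\{\xe(x)\mid \xe'(x)\leq u'\}$. The heart of the argument is establishing the two transfer equivalences
\[
    \xe(x)\leq\iota(u') \iff \xe'(x)\leq u'
    \qquad\text{and}\qquad
    \iota(u')\leq\ye(y) \iff u'\leq\ye'(y).
\]
In each, one direction is immediate from the definition of $\iota$ as a join; the nontrivial direction $\xe(x)\leq\iota(u') \Rightarrow \xe'(x)\leq u'$ is where I expect the main obstacle to lie. The trick is to expand $u'$ by the meet representation~(1) in $C'$ and, for each $y$ with $u'\leq\ye'(y)$, transport every inequality $\xe'(x')\leq\ye'(y)$ through the chain $x'\mathrel{Z}y$ to $\xe(x')\leq\ye(y)$ using~(2) in both lattices; this shows $\ye(y)$ bounds the defining join, hence $\iota(u')\leq\ye(y)$, and a final application of~(2) returns $\xe'(x)\leq\ye'(y)$ for all such $y$.

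From these equivalences $\iota$ is order-reflecting, and the symmetric construction $\iota^{-1}(u) = \bigvee^{C'}\{\xe'(x)\mid\xe(x)\leq u\}$ is its two-sided inverse once the representations in~(1) are fed in; a surjective order-embedding between complete lattices is automatically a complete-lattice isomorphism. The compatibilities $\iota\circ\xe' = \xe$ and $\iota\circ\ye' = \ye$ follow by combining the transfer equivalences with~(1) and~(2), and uniqueness is forced: any complete-lattice isomorphism respecting the compatibilities must send $u' = \bigvee\{\xe'(x)\mid\xe'(x)\leq u'\}$ to $\bigvee\{\xe(x)\mid\xe'(x)\leq u'\} = \iota(u')$.
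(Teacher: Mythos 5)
Your proof is correct, and for item (3) your map $\iota(u') = \bigvee\{\xe(x) \mid \xe'(x)\leq u'\}$ is precisely the ``extension of $\xe'(x)\mapsto \xe(x)$ by taking suprema'' that the paper describes, so the route is the same one the paper has in mind. The real difference is self-containedness: the paper proves nothing directly, quoting items (1) and (2) from Propositions 2.10 and 2.6 of \cite{gehrke06} and deferring the details of (3) to \cite{wille82}, whereas you derive (1) and (2) from the Galois-connection facts via the lemma $\xe(x)\leq u \iff x\in u$ and $u\leq\ye(y)\iff y\in p(u)$, and you verify explicitly the transfer equivalences that make the suprema-extension an order isomorphism, hence a complete-lattice isomorphism. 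Two small remarks. First, in the equivalence $\iota(u')\leq\ye(y)\iff u'\leq\ye'(y)$, neither direction is literally ``immediate from the definition of $\iota$ as a join'': the right-to-left direction is the transport step you spell out inside the hard direction of the first equivalence (each joinand $\xe(x')$ with $\xe'(x')\leq u'\leq\ye'(y)$ satisfies $x'Zy$, hence $\xe(x')\leq\ye(y)$), and the left-to-right direction additionally needs the join representation of $u'$ in $C'$; both are covered by the machinery you describe, so this is presentational only, not a gap. Second, your closing argument in fact shows slightly more than asked: uniqueness of $\iota$ uses only preservation of joins and the compatibility $\iota\circ\xe'=\xe$, with the $\ye$-compatibility then forced rather than assumed.
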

\begin{proof}
    Item 1 is Proposition 2.10 and item 2 is Proposition 2.6 in \cite{gehrke06}. The isomorphism \(\iota\colon C' \to C\) in item 3 is obtained by extending the mapping \(\xe'(x) \mapsto \xe(x)\) by taking suprema. For details see \cite{wille82} and also Section 2 of \cite{gehrke06}, by unwrapping the characterisation of \(\GC(P)\) as a Dedekind-MacNeille completion.
\end{proof}

\begin{example}
    The canonical extension \(D\ce\) of a distributive lattice \(D\) and \(L^\SO\) of a frame \(L\) are given by \(\GC(\Fi(D), \Idl(D), Z)\) and \(\GC(\SO(L), L, Z')\), respectively, where \((F,I)\in Z\) iff \(F\cap I \neq \emptyset\) and \(Z' = {\ni}\) i.e.\ \((F,a)\in Z'\) iff \(a\in F\) \cite{gehrke2001bounded, jakl20}.
\end{example}

\begin{remark}
    \label{r:fix-PY}
    It is a standard fact that the poset of fixpoints on one side of an adjunction is isomorphic to the poset of fixpoints on the other side.
    In our case, $\GC(P)$ is isomorphic to the lattice of subsets of $Y$ which are the fixpoints of $p \circ q$. However, the isomorphism \(\fix(q \circ p) \cong \fix(p \circ q)\) is antitone because the maps \(p\) and \(q\) are antitone. In order to obtain a monotone isomorphism we have to order $\Ps(Y)$ by the inverse subset order.

    This point of view becomes important when we discuss concrete descriptions of polarities on filters of a frame in Section~\ref{s:concrete-desc}.
\end{remark}

Furthermore, we also make use of the following simple consequence of Theorem~\ref{t:polarities} which describes the upside-down version of \(\GC(P)\).

\begin{corollary}
    \label{c:GC-op}
    For sets \(X,Y\) and a relation \(Z\sue X\times Y\), \[\GC(X,Y,Z)\op \ee\cong \GC(Y,X,Z\op)\] where \(Z\op = \{ (y,x) \mid (x,y)\in Z\}\).
\end{corollary}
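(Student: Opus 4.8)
The plan is to deduce the statement from the universal characterisation in Theorem~\ref{t:polarities}(3), by exhibiting the order-dual of $\GC(X,Y,Z)$ as a model of the polarity $(Y,X,Z\op)$. As a preliminary bookkeeping step I would record how the opposite polarity acts on powersets. Writing $p,q$ for the antitone maps attached to $(X,Y,Z)$ and $p',q'$ for those attached to $(Y,X,Z\op)$, a direct unwinding of the definitions gives $p'(N) = \{x \mid \forall y\in N.\ (y,x)\in Z\op\} = q(N)$ and, symmetrically, $q'(M) = p(M)$. Hence $\GC(Y,X,Z\op) = \fix(q'\circ p') = \fix(p\circ q)$. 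Already at this point one could invoke Remark~\ref{r:fix-PY}, which says that $\fix(q\circ p)$ and $\fix(p\circ q)$ are antitonely isomorphic; reading that antitone isomorphism as a monotone one between the opposite lattice and $\fix(p\circ q)$ yields $\GC(X,Y,Z)\op \cong \GC(Y,X,Z\op)$ directly.

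I would, however, present the self-contained argument through the universal property, since it avoids appealing to the antitone-fixpoint correspondence. Let $C = \GC(X,Y,Z)$ with its maps $\xe\colon X\to C$ and $\ye\colon Y\to C$ from Theorem~\ref{t:polarities}. I claim that the order-dual $C\op$, together with $\ye\colon Y\to C\op$ playing the role of the ``first-set'' map and $\xe\colon X\to C\op$ playing the ``second-set'' map, satisfies conditions (1) and (2) for the polarity $(Y,X,Z\op)$. For (1): passing to the opposite order swaps meets and joins, so the meet-density of $\ye$ in $C$ becomes join-density of $\ye$ in $C\op$, and the join-density of $\xe$ in $C$ becomes meet-density of $\xe$ in $C\op$, which is exactly condition (1) for $(Y,X,Z\op)$. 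For (2): $\ye(y) \le_{C\op} \xe(x)$ iff $\xe(x)\le_C \ye(y)$ iff $x Z y$ iff $(y,x)\in Z\op$, as required.

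With both conditions verified, Theorem~\ref{t:polarities}(3)---applied with $\GC(Y,X,Z\op)$ as the canonical model and $C\op$ (with the maps $\ye,\xe$ just exhibited) as a competing model---produces a unique complete-lattice isomorphism $C\op \cong \GC(Y,X,Z\op)$ commuting with both pairs of maps, which is the assertion of Corollary~\ref{c:GC-op}. There is essentially no genuine obstacle here: the whole content is careful bookkeeping. The only point to watch is that order reversal simultaneously swaps the two density statements \emph{and} interchanges the roles of $\xe$ and $\ye$, so that condition (2) must be checked against $Z\op$ rather than $Z$; once the maps are assigned to the correct sides, everything falls out of Theorem~\ref{t:polarities}.
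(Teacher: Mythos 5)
Your proposal is correct and takes essentially the approach the paper intends: Corollary~\ref{c:GC-op} is stated there as a direct consequence of Theorem~\ref{t:polarities}, and your verification that \(C\op\), with the roles of \(\xe\) and \(\ye\) interchanged, satisfies conditions (1) and (2) for the polarity \((Y,X,Z\op)\) is precisely the bookkeeping that deduction requires. Your alternative route via \(q'\circ p' = p\circ q\) and Remark~\ref{r:fix-PY} is equally valid and also already implicit in the paper's setup.
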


\paragraph{Polarities of posets.}
\label{par:pos-pol}
In the following, we fix a polarity \(P = (X,Y,Z)\) where $(X,\leq)$ and $(Y,\leq)$ are posets. This situation is motivated by our applications where \(Y\) is a frame and \(X\) is a poset of filters on \(Y\).

The following simple but really useful facts demonstrate the power of the abstract characterisation given by Theorem~\ref{t:polarities}.

\begin{lemma}
    \label{l:xe-ye-basics}
    Let \(\xe\) and \(\ye\) be as above. Then:
    \begin{enumerate}
        \item \(\xe(x) \leq \xe(x')\) \ee{iff,} for every $y\in Y$, $x' Z y$ implies $x Z y$.
        \item $\xe$ is monotone \ee{iff}  $x \leq x'$ and $x' Z y$ implies $x Z y$.
        \item $\xe$ is injective \ee{iff} (for all $y\in Y$, $x' Z y \iff x Z y$) implies $x = x'$.
        \item $\xe$ is order-reflecting \ee{iff} whenever ($\forall y\in Y$, $x' Z y$ implies $x Z y$) then $x\leq x'$.
    \end{enumerate}
    And dually:
    \begin{enumerate}
        \setcounter{enumi}{4}
        \item \(\ye(y) \leq \ye(y')\) \ee{iff,} for every $x\in X$, $x Z y$ implies $x Z y'$.
        \item $\ye$ is monotone \ee{iff} $x Z y$ and $y \leq y'$ implies $x Z y'$.
        \item $\ye$ is injective \ee{iff} (for all $x\in X$, $x Z y \iff x Z y'$)  implies $y = y'$.
        \item $\ye$ is order-reflecting \ee{iff} whenever ($\forall x\in X$, $x Z y$ implies $x Z y'$) then $y\leq y'$.
    \end{enumerate}
\end{lemma}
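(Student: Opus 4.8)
The plan is to reduce all eight items to the first statement, item~1, together with its mirror image, item~5; the remaining six items are then purely formal unwindings of the definitions of \emph{monotone}, \emph{injective}, and \emph{order-reflecting}, so no separate work is needed for them.

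First I would establish item~1, which is the heart of the lemma. Recall that $\GC(P)$ carries the subset order inherited from $\Ps(X)$ and that $\xe(x) = qp(\{x\})$. Since $q\circ p$ is a closure operator, the standard fact that $qp(\{x\}) \sue qp(\{x'\})$ iff $\{x\} \sue qp(\{x'\})$ reduces the comparison to the membership $x \in qp(\{x'\})$. I would then invoke the adjunction $N \sue p(M) \iff M \sue q(N)$ with $M = \{x\}$ and $N = p(\{x'\})$ to rewrite $\{x\} \sue q(p(\{x'\}))$ as the reversed inclusion $p(\{x'\}) \sue p(\{x\})$. Finally, unwinding the definition of $p$, this inclusion says precisely that every $y$ with $x' Z y$ also satisfies $x Z y$. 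Chaining the equivalences gives item~1. (Alternatively, one can read item~1 off Theorem~\ref{t:polarities}: by its item~1 the element $\xe(x')$ is the meet of the $\ye(y)$ lying above it, and by its item~2 these are exactly the $\ye(y)$ with $x'Zy$; hence $\xe(x)\le\xe(x')$ iff $\xe(x)\le\ye(y)$ for all such $y$, which by item~2 again is $xZy$.)

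Items~2--4 are then immediate. Monotonicity of $\xe$ is the implication $x\le x' \Rightarrow \xe(x)\le\xe(x')$, and substituting the criterion of item~1 for the conclusion yields item~2 verbatim. For item~3, $\xe(x)=\xe(x')$ is the conjunction of the two inequalities, which by item~1 becomes the biconditional ``$x'Zy \iff xZy$ for all $y$'', so injectivity is exactly the stated implication. Item~4 is order-reflection, $\xe(x)\le\xe(x') \Rightarrow x\le x'$, and replacing the hypothesis using item~1 gives the claim. The dual half works identically, with one simplification: since $\ye(y)=q(\{y\})$ and $q(N)$ is always Galois closed (because $qpq=q$), no closure is needed, and $\ye(y)\le\ye(y')$ is literally $q(\{y\})\sue q(\{y'\})$, which by the definition of $q$ says that every $x$ with $xZy$ also satisfies $xZy'$; this is item~5, and items~6--8 follow from it just as items~2--4 followed from item~1. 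One could instead derive items~5--8 from items~1--4 via the symmetry of Corollary~\ref{c:GC-op}, but the direct argument is shorter.

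The only step demanding genuine care is item~1, where the two antitone maps $p,q$ and the closure $q\circ p$ interact. One must correctly reverse the inclusion when passing through the adjunction, so that $\xe$ ends up \emph{contravariant} in the relation $Z$ (i.e.\ $\xe(x)\le\xe(x')$ corresponds to $x'Zy \Rightarrow xZy$, not the other direction); once this flip is handled, everything else is bookkeeping.
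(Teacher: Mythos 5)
Your proposal is correct, but your primary argument takes a genuinely different route from the paper's. The paper proves item 1 entirely from the abstract characterisation of $\GC(P)$: by Theorem~\ref{t:polarities}.1, $\xe(x)\leq\xe(x')$ iff every $\ye(y)$ above $\xe(x')$ also lies above $\xe(x)$, and Theorem~\ref{t:polarities}.2 turns each such inequality into an instance of the relation $Z$ --- this is precisely the alternative you sketch in parentheses. Your main argument instead computes concretely inside $\Ps(X)$: the closure-operator fact reduces $qp(\{x\})\sue qp(\{x'\})$ to $x\in qp(\{x'\})$, the adjunction $N\sue p(M)\iff M\sue q(N)$ flips this to $p(\{x'\})\sue p(\{x\})$, and unwinding the definition of $p$ yields the stated condition; every step is valid. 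The trade-off: the paper's proof uses only the universal properties of Theorem~\ref{t:polarities} (which is exactly the point that section wants to illustrate, and makes the lemma available for any lattice presented abstractly via conditions (1)--(2) of that theorem), while yours is more elementary and self-contained, needing nothing beyond the Galois-connection identities and the concrete realization $\GC(P)=\fix(q\circ p)$. Your handling of the dual half is also slightly sharper than the paper's ``completely analogous'': since $qpq=q$, the element $\ye(y)=q(\{y\})$ is already Galois closed, so item 5 is literally the inclusion $q(\{y\})\sue q(\{y'\})$ read off from the definition of $q$, with no closure operator intervening. The reduction of items 2--4 (resp.\ 6--8) to item 1 (resp.\ item 5) is the same formal unwinding in both proofs.
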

\begin{proof}
    For item 1 observe that, by Theorem~\ref{t:polarities}.1, $\xe(x) \leq \xe(x')$ iff, for every \(y\in Y\), \(\xe(x') \leq \ye(y)\) implies \(\xe(x)\leq \ye(y)\). The rest follows by Theorem~\ref{t:polarities}.2.

    Items 3 and 4 follow immediately from item 1. For item 2, fix \(x \leq x'\). By item 1, we see that \(\xe(x) \leq \xe(x')\) iff $x' Z y$ implies $x Z y$. The second part is completely analogous.
\end{proof}

\noindent
Observe that if \(\xe\) is both monotone and order-reflective then
\[
    x \leq x' \ete{iff,} \forall y\in Y,\ x' Z y \text{ implies } x Z y
\]
and similarly for \(\ye\).

In our examples, we often have that $X$ is a $\vee$-semilattice with directed infima, for example, when $X$ is the set of Scott-open filters. To accommodate various types of join and meet preservation requirements for \(\xe\) we establish the following general lemma.

\begin{lemma}
    \label{l:xe-pres}
    Let $A$ be a subset of $X$.
    Assuming the supremum $\bigvee A$ exists in $X$, then
    \begin{enumerate}
        \item \(\xe(\bigvee A) = \bigvee \xe(A)\) \ee{iff} (for every $a\in A$, $a Z y$) implies $(\bigvee A) Z y$.
    \end{enumerate}
    Further, assuming the infimum $\bigmee A$ exists in $X$
    \begin{enumerate}
        \setcounter{enumi}{1}
        \item if $\xe$ is monotone and order-reflecting, then $\xe(\bigmee A) = \bigmee \xe(A)$.
    \end{enumerate}
\end{lemma}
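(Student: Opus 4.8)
Both parts are comparisons, inside $C := \GC(P)$, of the image $\xe(\ell)$ of a limit $\ell$ formed in $X$ with the corresponding limit of the family $\xe(A)$ formed in $C$. The engine throughout is the abstract characterisation of Theorem~\ref{t:polarities}: every $u\in C$ is the meet $\bigmee\{\ye(y)\mid u\le\ye(y)\}$ and the join $\bigvee\{\xe(x)\mid\xe(x)\le u\}$, together with the pivotal equivalence $\xe(x)\le\ye(y)\iff xZy$. Consequently, to test an inequality $u\le u'$ in $C$ I would expand $u'=\bigmee\{\ye(y)\mid u'\le\ye(y)\}$, so that $u\le u'$ holds iff $u\le\ye(y)$ for every $y$ with $u'\le\ye(y)$; dually I would expand $u$ as the join of the $\xe(x)$'s below it. This turns each inequality in $C$ into an elementary statement about the relation $Z$.

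For part (1), write $s=\bigvee A$ and $v=\bigvee_{a\in A}\xe(a)$, the latter computed in $C$. The inequality $v\le\xe(s)$ is immediate from monotonicity of $\xe$: each $a\le s$ gives $\xe(a)\le\xe(s)$, so $\xe(s)$ is an upper bound of the family $\{\xe(a)\}_{a\in A}$ and hence dominates their join. (This monotonicity, automatic in the intended filter applications, is exactly what makes the single stated condition equivalent to \emph{full} preservation.) For the reverse inequality I expand $v=\bigmee\{\ye(y)\mid v\le\ye(y)\}$. Now $v\le\ye(y)$ says that $\ye(y)$ bounds every $\xe(a)$, i.e.\ $aZy$ for all $a\in A$, while $\xe(s)\le\ye(y)$ says $sZy$, i.e.\ $(\bigvee A)Zy$. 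Hence $\xe(s)\le v$ holds iff for every $y$, having $aZy$ for all $a\in A$ forces $(\bigvee A)Zy$ --- precisely the displayed condition. Combining the two inequalities yields the claimed equivalence.

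For part (2), now assume $\xe$ is monotone and order-reflecting, and put $m=\bigmee A$ and $w=\bigmee_{a\in A}\xe(a)$, recalling that meets in $C$ are computed as intersections. Monotonicity gives $\xe(m)\le\xe(a)$ for every $a\in A$, whence $\xe(m)\le w$. The substance is the reverse inequality $w\le\xe(m)$, and here I would use join-density: $w=\bigvee\{\xe(x)\mid\xe(x)\le w\}$, so it suffices to show $\xe(x)\le\xe(m)$ for each generator $\xe(x)\le w$. Such an $x$ satisfies $\xe(x)\le w\le\xe(a)$ for all $a\in A$; the order-reflecting property (Lemma~\ref{l:xe-ye-basics}.4), which with monotonicity yields $\xe(x)\le\xe(x')\iff x\le x'$, then upgrades these to $x\le a$ for all $a$, so $x\le\bigmee A=m$ by the universal property of the meet in $X$, and monotonicity returns $\xe(x)\le\xe(m)$. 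Thus $\xe(m)$ is an upper bound of all the generators of $w$, giving $w\le\xe(m)$ and therefore $\xe(m)=w$.

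The main obstacle is conceptual rather than computational: one must consistently translate the order of the abstract lattice $C$ into statements about $Z$ through the density and adjunction of Theorem~\ref{t:polarities}, and, in part (1), recognise that the stated condition pins down exactly the one inequality that is not already forced by monotonicity. In part (2) the delicate point is that order-reflectingness is what lets the inequalities $\xe(x)\le\xe(a)$ be transported back into $X$, where the defining property of the meet $\bigmee A$ can finally be invoked.
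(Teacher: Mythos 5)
Your proposal is correct and takes essentially the same approach as the paper: both parts are handled by translating inequalities in $\GC(P)$ into statements about $Z$ via the density and adjunction properties of Theorem~\ref{t:polarities}, and your argument for part (2) (join-density, order-reflection to pull $\xe(x)\leq\xe(a)$ back to $x\leq a$, then monotonicity) is the paper's proof almost verbatim. The one divergence is in part (1): the paper's proof only shows that the displayed condition is equivalent to the single inequality $\xe(\bigvee A)\leq\bigvee\xe(A)$ and tacitly identifies this with equality, whereas you correctly observe that the reverse inequality $\bigvee\xe(A)\leq\xe(\bigvee A)$ needs monotonicity of $\xe$ (an assumption not literally present in the statement of part (1), though available in all the paper's applications by Lemma~\ref{l:xe-ye-basics}) --- so on this point your write-up is the more precise of the two.
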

\begin{proof}
    Observe that, by Theorem~\ref{t:polarities}.1, $\xe(\bigvee A) \leq \bigvee \xe(A)$ is equivalent to: for a fixed~\(y\in Y\),
    \[ (\forall a\in A,\, \xe(a) \leq \ye(y)) \ete{implies} \xe(\bigvee A) \leq \ye(y).\]
    However, by Theorem~\ref{t:polarities}.2, this is equivalent to the claim.

    For the second part, we want to show that $\bigmee \xe(A) \leq \xe(\bigmee A)$. Assume that $\xe(x) \leq \bigmee \xe(A)$, i.e.\ $\xe(x) \leq \xe(a)$ for all $a\in A$. Since $\xe$ reflects the order, $x \leq a$ for every $a\in A$, i.e.\ $x\leq \bigmee A$. Then, since $\xe$ is monotone, also $\xe(x) \leq \xe(\bigmee A)$. As $x$ was chosen arbitrarily, by Theorem~\ref{t:polarities}.1, we obtain that $\bigmee \xe(A) \leq \xe(\bigmee A)$.
\end{proof}

For future reference, we also mention the dual version of this fact.
\begin{lemma}
    \label{l:ye-pres}
    Let $B$ be a subset of $Y$.
    Assuming the infimum $\bigmee B$ exists in $Y$, then
    \begin{enumerate}
        \item $\ye(\bigmee B) = \bigmee \ye(B)$ \ee{iff} (for every $b\in B$, $x Z b$) implies $x Z (\bigmee B)$.
    \end{enumerate}

    Further, assuming the supremum $\bigvee B$ exists in $Y$
    \begin{enumerate}
        \setcounter{enumi}{1}
        \item if $\ye$ is monotone and order-reflecting, then $\ye(\bigvee B) = \bigvee \ye(B)$.
    \end{enumerate}
\end{lemma}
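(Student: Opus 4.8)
The plan is to deduce Lemma~\ref{l:ye-pres} from Lemma~\ref{l:xe-pres} via the polarity duality of Corollary~\ref{c:GC-op}, so that no argument is repeated from scratch. Write $C = \GC(X,Y,Z)$ with its embeddings $\xe,\ye$, and let $D = \GC(Y,X,Z\op)$ be the Galois-closed lattice of the transpose polarity, with embeddings $j\colon Y\to D$ and $k\colon X\to D$ furnished by Theorem~\ref{t:polarities}. The first step is to see that the isomorphism $\psi\colon C\op\to D$ underlying Corollary~\ref{c:GC-op} may be taken to satisfy $\psi\circ\ye = j$ and $\psi\circ\xe = k$. Indeed, viewing $\ye$ and $\xe$ as maps into $C\op$, meet-density of $\ye$ in $C$ (Theorem~\ref{t:polarities}.1) becomes join-density in $C\op$, join-density of $\xe$ becomes meet-density, and the biconditional $\xe(x)\leq\ye(y)\iff x Z y$ (Theorem~\ref{t:polarities}.2) reads in $C\op$ as $\ye(y)\leq\xe(x)\iff(y,x)\in Z\op$. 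Thus $\ye,\xe$ satisfy conditions~(1) and~(2) of Theorem~\ref{t:polarities} for the transpose polarity $(Y,X,Z\op)$, and the unique isomorphism onto $D$ produced by Theorem~\ref{t:polarities}.3 is the desired $\psi$.

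The one genuine subtlety is the bookkeeping of orders, which is the order reversal already flagged in Remark~\ref{r:fix-PY}. To make monotonicity and the two kinds of extremum line up under $\psi$, I would run Lemma~\ref{l:xe-pres} for the transpose polarity with the posets $Y$ and $X$ carried in their opposite orders; then the supremum of a set $B\sue Y$ computed there is its infimum $\bigmee B$ in $Y$, and ``$j$ monotone and order-reflecting'' becomes verbatim ``$\ye$ monotone and order-reflecting''. With this convention, part~1 of Lemma~\ref{l:xe-pres} applied to $j$ and a subset $B\sue Y$ states that $j$ preserves the relevant supremum iff $(\forall b\in B,\ (b,x)\in Z\op)$ implies $(\bigmee B,x)\in Z\op$. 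Applying $\psi^{-1}$ turns the join in $D$ of $j(B)$ into the meet in $C$ of $\ye(B)$, so the preservation statement becomes $\ye(\bigmee B) = \bigmee\ye(B)$; rewriting $(b,x)\in Z\op$ as $x Z b$ gives exactly part~1 of Lemma~\ref{l:ye-pres}. Part~2 is obtained identically from part~2 of Lemma~\ref{l:xe-pres}, the hypotheses transporting verbatim through the identification $j = \psi\circ\ye$.

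I expect the only real obstacle to be this order reversal: it is easy to forget a dualisation and prove instead that $\ye$ preserves suprema $\bigvee B$, or to confuse monotonicity of $j$ with antitonicity of $\ye$. Once the slogan ``transpose the relation and reverse both poset orders'' is fixed, every clause is forced. If one prefers to avoid the duality machinery, the statement can also be proved by mirroring the proof of Lemma~\ref{l:xe-pres} directly: for part~1, use the join-density of $\xe$ together with Theorem~\ref{t:polarities}.2 to reduce the equality to the displayed condition exactly as in the proof of Lemma~\ref{l:xe-pres}, and for part~2 repeat the order-reflecting computation there with meets and joins interchanged, now invoking the meet-density half of Theorem~\ref{t:polarities}.1.
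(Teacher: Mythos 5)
Your proof is correct. The paper gives no explicit proof of Lemma~\ref{l:ye-pres} --- it is stated merely as ``the dual version'' of Lemma~\ref{l:xe-pres} --- and your argument, which transports Lemma~\ref{l:xe-pres} across the transpose polarity $(Y,X,Z\op)$ via Theorem~\ref{t:polarities}.3 while reversing both poset orders (the one genuine subtlety, which you handle correctly), is exactly a rigorous rendering of that intended duality, so it matches the paper's approach.
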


We usually get monotonicity for free. On the other hand, checking that $\xe$ resp.\ $\ye$ is order-reflective is not always immediate. We often get this for free by the following simple fact about injective semilattice homomorphisms.

\begin{lemma}
    \label{l:slat-inject}
    Injective semilattice homomorphism are order-reflective.
\end{lemma}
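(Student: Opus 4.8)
The plan is to exploit the fact that in any semilattice the partial order is recovered algebraically from the binary operation, which makes the combination of injectivity and homomorphism automatically yield order-reflection. Concretely, I would treat the join-semilattice case (the meet-semilattice case being entirely dual), since in our applications $X$ is a $\vee$-semilattice.

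First I would recall that for elements $a,b$ of a join-semilattice one has $a \leq b$ if and only if $a \vee b = b$. Now fix an injective semilattice homomorphism $f$ and suppose $f(a) \leq f(b)$. By this order characterisation applied in the codomain, $f(a) \vee f(b) = f(b)$. Since $f$ preserves binary joins, the left-hand side equals $f(a \vee b)$, so $f(a \vee b) = f(b)$. Injectivity of $f$ then gives $a \vee b = b$, that is, $a \leq b$. Thus $f$ reflects the order. (For a meet-semilattice homomorphism one argues identically using $a \leq b$ iff $a \mee b = a$.)

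There is essentially no obstacle here: the only point worth flagging is that the order is definable from the operation, which is precisely what lets injectivity upgrade to order-reflection. This also dovetails with the preceding remark that monotonicity comes for free, since semilattice homomorphisms are automatically monotone; hence whenever $X$ carries the relevant semilattice structure and $\xe$ (or $\ye$) is an injective homomorphism, the hypotheses of Lemmas~\ref{l:xe-pres} and \ref{l:ye-pres} on order-reflection are met without further work.
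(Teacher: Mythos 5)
Your proof is correct and is essentially identical to the paper's: both use the algebraic characterisation $a \leq b \iff a \vee b = b$, apply preservation of binary joins to get $f(a \vee b) = f(b)$, and conclude $a \vee b = b$ by injectivity. No gaps; the dualisation remark for meet-semilattices is a harmless addition.
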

\begin{proof}
    Let $f\colon S \to S'$ be an injective $\vee$-semilattice homomorphism.
    Assume $f(s) \leq f(s')$. Then $f(s') = f(s) \vee f(s') = f(s \vee s')$ and by injectivity $s\leq s'$ since $s' = s \vee s'$.
\end{proof}

\paragraph{Computing inside complete lattices.}
\label{par:inside-clat}
We finish this overview of the general theory of polarities by a result that allows us to give concrete description of \(\GC(P)\) by computing it as a certain closure within a complete lattice.
To this end, given a subset \(S\) of a complete lattice \(C\), define the following closure and interior operators:
\begin{align*}
    \clOp_S : &\  C \to C
    &
    \inOp_S : &\ C \to C
    \\
    & c \mapsto \bigmee \{ s \in S \mid c \leq s\}
    &
    & c \mapsto \bigvee \{ s \in S \mid s \leq c\}
\end{align*}
Furthermore, write
\begin{equation}
    \J(S)\sue C
    \qtq{and}
    \M(S)\sue C
    \label{eq:joins-clos}
\end{equation}
for the closure of \(S\) under all joins and meets in \(C\), respectively.

\begin{proposition}
    \label{p:int-cl}
    For \(X,Y\sue C\) where \(C\) is a complete lattice and \(\leq\) is the order of~\(C\),
    \[
        \GC(X,Y,\leq) \cong \inOp_X[\M(Y)]  \cong \clOp_Y[\J(X)]
    \]
    and, consequently, also
    \[
        \GC(X,Y,\leq) \cong \GC(X,\M(Y),\leq) \cong \GC(\J(X),Y,\leq) \cong \GC(\J(X),\M(Y),\leq).
    \]
\end{proposition}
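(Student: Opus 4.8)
The plan is to unwind the polarity $(X,Y,\leq)$ and realise its Galois-closed sets concretely inside $C$. Here the induced maps specialise to $p(M) = \{y\in Y \mid \forall x\in M.\ x\leq y\}$, the upper bounds of $M$ lying in $Y$, and $q(N) = \{x\in X \mid \forall y\in N.\ x\leq y\}$, the lower bounds of $N$ lying in $X$. I would introduce two candidate maps out of $\GC(X,Y,\leq)$,
\[
    \Phi(M) = \bigwedge p(M)
    \qtq{and}
    \Psi(M) = \bigvee M,
\]
with meet and join taken in $C$, and show that $\Phi$ is an order isomorphism onto $\clOp_Y[\J(X)]$ while $\Psi$ is an order isomorphism onto $\inOp_X[\M(Y)]$; composing these then delivers all three isomorphisms of the first display at once.

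The computational heart is the pair of identities $\clOp_Y(\bigvee A) = \bigwedge p(A)$ for $A\sue X$ and $\inOp_X(\bigwedge B) = \bigvee q(B)$ for $B\sue Y$, both immediate from rewriting $\bigvee A \leq y \iff y\in p(A)$ and $x\leq\bigwedge B \iff x\in q(B)$. Since $\bigvee M\in\J(X)$, the first identity shows $\Phi(M)=\clOp_Y(\bigvee M)$ lands in $\clOp_Y[\J(X)]$, and surjectivity onto it follows by taking $M = qp(A)$ and using the standard Galois identity $pqp=p$; the analogous facts for $\Psi$ use $qpq=q$. For injectivity and order reflection I would record that for $M\in\GC(X,Y,\leq)$ one has both $M = \{x\in X \mid x\leq \bigwedge p(M)\}$ and $p(M) = \{y\in Y \mid \bigvee M \leq y\}$, so $M$ is recovered from either $\Phi(M)$ or $\Psi(M)$; combined with the antitonicity of $p$ and $q$ this makes each of $\Phi,\Psi$ monotone and order-reflecting, hence (being bijections) isomorphisms of complete lattices. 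An equivalent route is to equip $\clOp_Y[\J(X)]$ with the evident maps from $X$ and $Y$ and verify conditions (1)–(2) of Theorem~\ref{t:polarities}, appealing to its uniqueness clause (3); I find the direct construction cleaner.

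For the ``consequently'' clause I would observe that the two concrete descriptions depend on $X$ only through $\J(X)$ and on $Y$ only through $\M(Y)$. Indeed $\J(\J(X)) = \J(X)$ and $\M(\M(Y)) = \M(Y)$, and moreover $\clOp_{\M(Y)} = \clOp_Y$ and $\inOp_{\J(X)} = \inOp_X$ as operators on $C$: one inequality is immediate since $Y\sue\M(Y)$, and the reverse holds because any $s=\bigwedge B\in\M(Y)$ with $c\leq s$ forces $\clOp_Y(c)\leq y$ for each $y\in B$, hence $\clOp_Y(c)\leq s$ (dually for $\inOp$). Applying the first part of the proposition to each of the four polarities then produces the single lattice $\clOp_Y[\J(X)]\cong\inOp_X[\M(Y)]$ in every case, yielding the four displayed isomorphisms.

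The main obstacle I anticipate is the bookkeeping about where each supremum and infimum is computed — inside $C$ versus inside the fixpoint lattice $\GC(X,Y,\leq)$, whose order is inclusion on $\Ps(X)$ — and, relatedly, verifying order reflection rather than mere bijectivity, since that is precisely what upgrades the set-level bijections $\Phi,\Psi$ to isomorphisms of complete lattices. The Galois identities $pqp=p$, $qpq=q$ and the ``$M$ is recovered from $\bigvee M$'' observation are the load-bearing facts that make surjectivity and injectivity fall out cleanly.
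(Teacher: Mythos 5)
Your proposal is correct and takes essentially the same approach as the paper: your map $\Psi(M)=\bigvee M$ is precisely the paper's $\alpha$, established as an isomorphism onto $\inOp_X[\M(Y)]$ by the same three checks (well-definedness via $\bigvee q(N)=\inOp_X(\bigmee N)$, monotonicity, order-reflection through recovering $M$ from its image, and surjectivity). The only differences are presentational: the paper obtains the $\clOp_Y[\J(X)]$ description by duality via Corollary~\ref{c:GC-op} where you construct $\Phi(M)=\bigmee p(M)$ explicitly, and your identities $\clOp_{\M(Y)}=\clOp_Y$ and $\inOp_{\J(X)}=\inOp_X$ spell out the ``consequently'' step that the paper treats tersely.
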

\begin{proof}
    Recall from \S\ref{par:FP-def} that \(\GC(X,Y,\leq)\) can be concretely computed as the image \(q[\Ps(Y)]\) for \(p\) and \(q\) as in \S\ref{p:polarity}, induced by \(Z\) set to \({\leq}\). Therefore, \(q[\Ps(Y)]\) consists of the sets of the form
    \[
        \{ x \in X \mid x \leq \bigmee N\}
        \qquad\ete{for some} N \sue Y.
    \]
    Define \(\alpha\colon q[\Ps(Y)] \to \inOp_X[\M(Y)]\) by sending \(S\) to \(\bigvee S\). Observe that this is a well-defined map. Indeed, for any \(N \sue Y\), we have \(\alpha(q(N)) = \inOp_X(\bigmee N)\) and, for \(N,N'\sue Y\), we have that \(q(N) = q(N')\) if and only if, for any \(x\in X\), \(x \leq \bigmee N\) iff \(x\leq \bigmee N'\).

    Next we show that \(\alpha\) is an isomorphism of posets. Since \(\alpha\) is clearly monotone, it is enough to show that it is onto and order-reflective. For the latter, assume \(\alpha(q(N)) \leq \alpha(q(N')\) for some \(N,N'\sue Y\) and let \(x \in q(N)\). Then, by definition, \(x\leq \alpha(q(N)) \leq \alpha(q(N')) \leq \bigmee N'\), giving us \(x\in q(N')\). Therefore \(q(N) \sue q(N')\). For surjectivity, it is enough to observe that a typical element of \(\inOp_X[\M(Y)]\) is of the form \(\inOp_X(\bigmee N) = \alpha(q(N))\) for some \(N \sue Y\). We have shown that \(\GC(X,Y,\leq)\) is isomorphic to \(\inOp_X[\M(Y)]\).

    From this it follows that \(\GC(X,Y,\leq) \cong \GC(X,\M(Y),\leq)\).
    By Corollary~\ref{c:GC-op} we also obtain the dual statement as \(\GC(X,Y,\leq)\op \cong \GC(Y,X,\geq) \cong \clOp_Y[\J(X)]\), viewed as a sub-poset of~\(C\op\).
\end{proof}

\paragraph{The isomorphism concretely.}
\label{par:iso-cl-int}
Observe that the proof gives us that the formula for the isomorphism is given by the closure operation
\[
    \clOp_Y : \inOp_X[\M(Y)]  \xrightarrow{\ee\cong} \clOp_Y[\J(X)].
\]
Indeed, for \(N = p(q(N))\) i.e.\ \(N = \{ y\in Y \mid \inOp_X(\bigmee N) \leq y\}\) and for the mapping \(\beta\colon p[\Ps(X)] \to \clOp_Y[\J(X)]\), \(S \mapsto \bigmee S\) (the map defined dually to \(\alpha\)), we have that \(\clOp_Y(\alpha(q(N))) = \clOp_Y(\inOp_X(\bigmee N)) = \bigmee N = \beta(N)\).

By a dual argument, the inverse map in the opposite direction is given by the interior operation
\[
    \inOp_X : \clOp_Y[\J(X)] \xrightarrow{\ee\cong} \inOp_X[\M(Y)].
\]

\section{Filter extensions}
\label{s:filter-ext}

Recent frame-theoretic constructions appearing in the literature, e.g.\ in \cite{picado2019Sc,moshier2020exact,jakl20}, are examples of polarities specified by a class of filters \(\F \sue \Fi(L)\) on a frame~\(L\). The induced polarity is of the form \((\F, L, \ni)\), i.e.\ it is the polarity \((\F,L,Z)\) with \(Z\) defined by: \(F Z a \iff a \in F\).
\begin{remark}
    Observe that the definition of \(Z\) aligns with our geometrical point of view. Indeed, with the mapping from \eqref{eq:fi-to-sl}, \(F Z a\) implies \(\fts(F) \sue \os(a)\).
    Furthermore, if \(F\) is strongly exact (cf.\ \S\ref{par:filt-cls}) the converse also holds. In fact, all classes of filters we consider in this text are subclasses of strongly exact filters.
\end{remark}

We define the \df{\(\F\)-extension} \(L\fe\) of \(L\) (or simply the \df{filter extension} if \(\F\) is clear from context) as the lattice of Galois closed sets for the polarity induced by \(\F\), that is,
\[
    L\fe = \GC(\F,L,\ni).
\]
Similarly to \cite{jakl20}, we denote the induced maps \(\ye[L]\) and \(\xe[\F]\) by
\[
    \lemb[\F] : L \to L\fe
    \qtq{and}
    \femb[\F] : \F \hookrightarrow L\fe
\]
respectively. Sometimes we simply write \(\lemb\) and \(\femb\) if \(\F\) is clear from the context.

Then, Theorem~\ref{t:polarities} entails the following universal properties of \(\lemb[\F]\) inspired by \ref{ax:dso} and \ref{ax:cso}.
Namely, items 1 and 2 in Theorem~\ref{t:polarities} imply that $\femb[\F](F) = \bigmee \lemb[\F][F]$ and, consequently, \(\lemb[\F]\) is the unique mapping \(e\colon L \to L\fe\) into a complete lattice \(L\fe\) such that:
\begin{axioms}
    \item[\namedlabel{ax:df}{(D\({}^\F\))}] Every element of $L\fe$ is a join of elements of the form $\bigmee e[F]$ for some \(F\in \F\) and a meet of elements of the form \(e(a)\) for some \(a\in L\).

    \item[\namedlabel{ax:cf}{(C\({}^\F\))}] If $\bigmee e[F] \leq e(a)$ for $F \in \F$ and $a\in L$, then $a\in F$.
\end{axioms}

The fact that \(\fembF\) is an embedding as well as many of its and \(\lembF\)'s desirable properties follow automatically from the construction and our observations in~\S\ref{par:pos-pol}. The following is a generalisation of Propositions 3.6 and 5.1 in~\cite{jakl20}.

\begin{proposition}
    \label{p:fe-basic-props}
    For a class of filters \(\F\) ordered by \(\sqleq\), as indicated in Section~\ref{s:filt}, and for \(\fembF\) defined as above we have that
    \begin{enumerate}
        \item $\fembF$ is monotone, injective and order-reflective, and
        \item $\fembF$ preserves existing joins and meets in  $(\F,\sqleq)$.
    \end{enumerate}
    Also, for \(\lembF\) defined as above we have that
    \begin{enumerate}
        \setcounter{enumi}{2}
        \item $\lembF$ is monotone,
        \item \(\lembF\) preserves 0 and finite meets,
        \item if $\lembF$ is injective then it is a frame embedding, i.e.\ it is order-reflective and preserves finite meets as well as arbitrary joins, and
        \item $\lembF$ is injective \ee{iff} $L$ is \df{$\F$-separable}, i.e.\
    \[ a = b \ete{in} L \qtq{iff} \forall F\in \F, \ a\in F \iff b\in F. \]
    \end{enumerate}
\end{proposition}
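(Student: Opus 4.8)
The plan is to obtain every item by instantiating the general polarity lemmas of \S\ref{par:pos-pol} to the polarity defining $L\fe = \GC(\F, L, \ni)$. Under this dictionary $X = \F$ carries the order $\sqleq$, so that $F \sqleq F'$ means $F' \sue F$; $Y = L$ carries its frame order; and the relation is membership, $F\,Z\,a \iff a \in F$. With these translations the abstract side-conditions of Lemmas~\ref{l:xe-ye-basics}, \ref{l:xe-pres} and~\ref{l:ye-pres} collapse to elementary facts about filters, so most of the proof is bookkeeping; the one genuinely delicate point is the preservation of joins in item~2.

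First I would read off items~1,~3 and~6. For item~1, I apply Lemma~\ref{l:xe-ye-basics}.2--4 to $\fembF = \xe[\F]$: monotonicity asks that $F' \sue F$ and $a \in F'$ force $a \in F$; injectivity asks that filters with the same elements coincide; and order-reflection asks that $(\forall a,\ a \in F' \Rightarrow a \in F)$, i.e.\ $F' \sue F$, give $F \sqleq F'$, which is the definition of $\sqleq$. For item~3 I apply Lemma~\ref{l:xe-ye-basics}.6 to $\lembF = \ye[L]$: monotonicity of $\lembF$ is exactly upward-closure of filters. Item~6 is Lemma~\ref{l:xe-ye-basics}.7 read off verbatim, since injectivity of $\lembF$ states that $(\forall F\in\F,\ a \in F \iff b \in F)$ implies $a = b$, which is $\F$-separability.

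Next come items~4 and~5. Preservation of finite meets follows from Lemma~\ref{l:ye-pres}.1, whose side-condition $(\forall b\in B,\ b\in F) \Rightarrow \bigmee B \in F$ is, for finite $B$, just closure of filters under finite meets; and $\lembF(0)$ equals the bottom of $L\fe$ because a filter contains $0$ exactly when it is the improper filter $L$. For item~5 I assume $\lembF$ injective: being an injective $\mee$-semilattice homomorphism it is order-reflecting by the order-dual of Lemma~\ref{l:slat-inject}, and then, being also monotone, it preserves all existing suprema by Lemma~\ref{l:ye-pres}.2; since $L$ is a frame these are all joins, so $\lembF$ is a frame embedding.

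Finally, item~2. Meets are immediate: by item~1 the map $\fembF$ is monotone and order-reflecting, so Lemma~\ref{l:xe-pres}.2 yields $\fembF(\bigmee A) = \bigmee \fembF(A)$ for every existing infimum. Joins are where I expect the main obstacle. By Lemma~\ref{l:xe-pres}.1, $\fembF(\bigvee A) = \bigvee \fembF(A)$ holds iff $(\forall a\in A,\ y\in a) \Rightarrow y \in \bigvee A$, that is, iff $y \in \bigcap A \Rightarrow y \in \bigvee A$. As every $\sqleq$-upper bound of $A$ lying in $\F$ is contained in $\bigcap A$, this is equivalent to the supremum $\bigvee A$ in $(\F,\sqleq)$ being realised as the set-theoretic intersection $\bigcap A$, i.e.\ to $\bigcap A \in \F$. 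The crux is therefore to verify that the existing $\sqleq$-suprema of $\F$ are computed as intersections; this is automatic for the classes closed under arbitrary intersections, and it is precisely the step at which one must invoke a structural property of $\F$ beyond the pure polarity formalism.
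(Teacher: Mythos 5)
Your proof is correct and follows essentially the same route as the paper's: items 1, 3 and 6 are read off from Lemma~\ref{l:xe-ye-basics}, item 4 from Lemma~\ref{l:ye-pres} plus the observation that only the improper filter contains $0$, item 5 from (the order-dual of) Lemma~\ref{l:slat-inject} together with Lemma~\ref{l:ye-pres}.2, and the meet half of item 2 from Lemma~\ref{l:xe-pres}.2.

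The join half of item 2 is the one place you part ways with the paper, and your caution there is justified --- indeed it exposes a gap in the paper's own proof rather than in yours. The paper dismisses item 2 as ``a consequence of Lemma~\ref{l:xe-pres}, (1) and the definition of~$Z$'', which tacitly assumes that a supremum existing in $(\F,\sqleq)$ must be the set-theoretic intersection; as you observe, Lemma~\ref{l:xe-pres}.1 reduces join preservation precisely to $\bigcap A \in \F$, and this is a structural property of $\F$, not a formal consequence of the polarity. The assumption can genuinely fail for classes not closed under intersections. For instance, take $L = \Omega(X)$ where $X = \mathbb{N}\cup\{\infty\}$ is the one-point compactification of discrete $\mathbb{N}$, and $\F = \SO(L)$: the Scott-open filters $F_n = \{U \mid \{0,\dots,n\} \sue U\}$ have intersection $\{\mathbb{N}, X\}$, which is not Scott-open, yet the family $\{F_n\}_n$ does have a supremum in $(\SO(L),\sqleq)$, namely $\{X\}$ (Scott-open by compactness of $X$, and the only Scott-open filter contained in $\{\mathbb{N},X\}$); since joins in $L^{\SO} \cong \Int(\SO(L))$ are intersections, $\femb[\SO]$ fails to preserve this supremum. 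So item 2, read literally for an arbitrary class $\F$, is false; the reading that survives --- and the only one the paper ever uses, since every class in diagram \eqref{eq:fi-inc} is closed under intersections --- is exactly the one you isolate: $\femb[\F]$ preserves those joins which are realised as intersections belonging to $\F$. With that reading your proof is complete, and it is more honest than the paper's one-line justification.
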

\begin{proof}
    First, we discuss consequences of Lemma~\ref{l:xe-ye-basics} for the polarity \((\F,L,Z)\) where \(Z = {\ni}\).
    Observe that \(F \sqleq G\), \(G \mathbin{Z} a\) and \(a \leq b\) implies \(F \mathbin{Z} b\), giving us that both \(\fembF\) and \(\lembF\) are monotone. Injectivity of order-reflectivity of \(\fembF\) follows from the fact that \(\sqleq\) and equality of filters is determined by the \(Z\) relation. We have checked (1) and (3). Observe that (6) also follows from the definition of \(Z\). Similarly, (2) is a consequence of Lemma~\ref{l:xe-pres}, (1) and the definition of~\(Z\).

    To check (4), we use Theorem~\ref{t:polarities}. Assume \(\femb(F) \leq \lemb(0)\) for some \(F\in \F\). By \ref{ax:cf}, \(0\in F\), i.e.\ \(F = \upset 0\) is the least filter in the \(\sqleq\) order.
    Since every element \(L\fe\) is a join of elements of the form \(\femb(F)\), we have \(\lemb(0) = 0\).
    Preservation of finite meets is a consequence of Lemma~\ref{l:ye-pres} and the definition of \(Z\) as \(\ni\). Finally, (5) follows from Lemma~\ref{l:slat-inject} and (4).
\end{proof}

\subsection{A concrete description}
\label{s:concrete-desc}
Up until now everything we proved about filter extensions followed from the abstract description given by Theorem~\ref{t:polarities}. In order to make comparison with existing constructions in the literature we need to have an exact description of the lattice \(L\fe\). In Corollary~4.4 of \cite{jakl20} it was shown that \(L^\SO\) can be identified with  the poset of intersections of Scott-open filters. The same is true about general filter extensions. Recall from \eqref{eq:joins-clos} the operation \(\J(\ARG)\) which closes a subset of a complete lattice under all joins.
Then, for \(\F\) viewed as a subset of the complete lattice \(\Fi(L)\), ordered by \(\sqleq\),
\begin{equation}
    \Int(\F) = \{ \bigcap A \mid A \sue \F\}
    \label{eq:int-F}
\end{equation}
is the poset of intersections of filters from \(\F\) since intersections of filters are precisely joins in~\(\Fi(L)\).

\begin{proposition}
    \label{p:fext-concr}
    For any \(\F\), the filter extension \(L\fe\) is isomorphic to \(\Int(\F)\).
\end{proposition}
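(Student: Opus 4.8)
The plan is to deduce the isomorphism as an instance of Proposition~\ref{p:int-cl}, by recognising the polarity $(\F, L, \ni)$ that defines $L\fe$ as an order-theoretic polarity living \emph{inside} the complete lattice $C = \Fi(L)$ ordered by $\sqleq$. The key observation is that, once $L$ is embedded into $\Fi(L)$ by principal filters, the membership relation $\ni$ is literally the restriction of the ambient order $\sqleq$.

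First I would set $\Pri(L) = \{ \upset a \mid a \in L\} \sue \Fi(L)$ and record that $a \mapsto \upset a$ is a bijection $L \to \Pri(L)$. Under the reverse-inclusion convention, $a \in F$ is equivalent to $\upset a \sue F$, i.e.\ to $F \sqleq \upset a$; hence the relation $\ni$ on $\F \times L$ corresponds, along the identity on $\F$ and the bijection $a \mapsto \upset a$ on the second coordinate, to the restriction of $\sqleq$ to $\F \times \Pri(L)$. Since $\GC$ depends only on the underlying relation (cf.\ Theorem~\ref{t:polarities}.3), relabelling the two sides by these bijections yields $\GC(\F, L, \ni) \cong \GC(\F, \Pri(L), \sqleq)$, where now both $\F$ and $\Pri(L)$ are regarded as subsets of $C = \Fi(L)$ and $\sqleq$ is the ambient order.

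Now Proposition~\ref{p:int-cl}, applied with $X = \F$, $Y = \Pri(L)$ and $C = \Fi(L)$, gives
\[
    \GC(\F, \Pri(L), \sqleq) \cong \clOp_{\Pri(L)}\bigl[\Int(\F)\bigr],
\]
once we observe that the closure of $\F$ under all joins of $\Fi(L)$ appearing in the proposition is exactly the intersection poset $\Int(\F)$ of \eqref{eq:int-F}: joins in $\Fi(L)$ are intersections ($\bigsqcup A = \bigcap A$), so this closure is $\{ \bigcap A \mid A \sue \F\}$, and it is already join-closed because $\bigsqcup_j \bigcap A_j = \bigcap(\bigcup_j A_j)$. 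It then remains to show that $\clOp_{\Pri(L)}$ acts as the identity on $\Fi(L)$, so that taking its image changes nothing. By definition $\clOp_{\Pri(L)}(G) = \bigsqcap \{ \upset a \mid G \sqleq \upset a\} = \bigsqcap\{\upset a \mid a \in G\}$, where $\bigsqcap$ is the meet in $(\Fi(L), \sqleq)$, i.e.\ the filter generated by the union $\bigcup_{a \in G} \upset a$. Since $G$ is up-closed this union is already $G$ itself, so $\clOp_{\Pri(L)}(G) = G$ for every $G \in \Fi(L)$. Combining these two identifications yields $\clOp_{\Pri(L)}[\Int(\F)] = \Int(\F)$ and hence $L\fe \cong \Int(\F)$.

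The genuinely delicate part is purely order-direction bookkeeping: because $\Fi(L)$ carries the reverse inclusion order, one must consistently track that $a \in F$ reads as $F \sqleq \upset a$, and that $\sqleq$-meets of principal filters are computed as generated filters (unions), not as set-theoretic intersections. Once the relation $\ni$ is correctly matched with $\sqleq$ and the closure operator $\clOp_{\Pri(L)}$ is seen to be trivial, the statement follows immediately from Proposition~\ref{p:int-cl} with no further computation.
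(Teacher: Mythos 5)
Your proof is correct and takes essentially the same route as the paper's: both identify $L$ with the principal filters $\Pri(L)$ inside $\Fi(L)$ so that $\ni$ becomes the restriction of $\sqleq$, apply Proposition~\ref{p:int-cl} with $C = \Fi(L)$, and conclude by observing that $\clOp_{\Pri(L)}$ acts as the identity on filters. The only difference is cosmetic: you spell out the order-direction bookkeeping (that $\sqleq$-meets of principal filters are generated filters, and that $\J(\F) = \Int(\F)$) which the paper leaves implicit.
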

\begin{proof}
    This follows from Proposition~\ref{p:int-cl}. Observe that, for \(F\in \Fi(L)\) and \(a,b\in L\), we have that \(F Z a\) (i.e.\ \(a\in F\)) iff \(F \sqleq \upset a\) in \(\Fi(L)\) and \(a\leq b\) iff \(\upset a \sqleq \upset b\). Consequently, for \(\F\) and \(\Pri(L) = \{\upset a \mid a\in L\}\) both viewed as subsets of the complete lattice \(\Fi(L)\),
    \[
        L^\F
        \cong \GC(\F,\Pri(L), \sqleq)
        \cong \clOp_{\Pri(L)}[\J(\F)]
        = \J(\F)
    \]
    where the last equality holds because for every \(F\in \Fi(L)\), we have that \(\clOp_{\Pri(L)}(F) = \bigsqcap \{ \upset a \mid a\in F\} = F\).
\end{proof}

Consequently, existing joins in \(L\fe\) are computed as intersections of filters in \(\Fi(L)\). In fact, it is a meet-sublattice of \(\Fi(L)\). The mappings \(\lemb\) and \(\femb\) composed with the isomorphism \(L\fe \cong \Int(\F)\) translate as follows.
\begin{equation}
    \begin{aligned}
        \femb\colon \F &\to \Int(\F)
        &\qquad&&
        \lemb\colon L &\to \Int(\F)
        \\
        F&\mapsto F
        &&&
        a&\mapsto \bigcap \{ F\in \F \mid a\in F\}
    \end{aligned}
    \label{eq:concr-e-k}
\end{equation}

Observe that classes of exact and strongly exact filters are already closed under intersections.
\begin{lemma}
    \(\Int(\SE(L)) = \SE(L)\) and \(\Int(\Ex(L)) = \Ex(L)\).
    \label{l:se-e-j}
    \label{l:SE-intersect}
\end{lemma}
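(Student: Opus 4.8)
The plan is to prove both identities by a single observation, namely that $\SE(L)$ and $\Ex(L)$ are each closed under arbitrary intersections. In each case the inclusion $\F \sue \Int(\F)$ is immediate, obtained from singleton families $A = \{F\}$, so what remains is the reverse inclusion $\Int(\F) \sue \F$, i.e.\ that an arbitrary intersection $F = \bigcap_i F_i$ of filters from the class lies again in the class. Before anything else I would record that such an $F$ is genuinely a filter (it contains $1$, is upward closed, and is closed under finite meets) and that the empty intersection equals the top filter $L$, which is trivially exact and strongly exact since it contains every element of $L$; this disposes of the degenerate member of $\Int(\F)$.

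The crux is that the defining closure condition in either class is a property of the index set $M \sue L$ of the meet and not of the ambient filter. Explicitly, the statement that $\bigmee M$ is an \emph{exact} meet, $(\bigmee M)\vee b = \bigmee_{a\in M}(a\vee b)$ for all $b$, and the statement that it is a \emph{strongly exact} meet, $\bigcap_{a\in M}\os(a) = \os(\bigmee M)$, each mention only $M$ and $L$. Hence, given $\{F_i\}_i \sue \Ex(L)$ (resp.\ $\sue \SE(L)$) and a subset $M \sue F = \bigcap_i F_i$ whose meet is exact (resp.\ strongly exact), we have $M \sue F_i$ for every $i$, so closure of each $F_i$ yields $\bigmee M \in F_i$, and therefore $\bigmee M \in \bigcap_i F_i = F$. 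This gives $F \in \Ex(L)$ (resp.\ $F \in \SE(L)$), as required.

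For the strongly exact case one may alternatively invoke Lemma~\ref{l:SE-char} directly: if $\bigcap_{a\in F}\os(a) \sue \os(b)$, then since $F \sue F_i$ gives $\bigcap_{a\in F_i}\os(a) \sue \bigcap_{a\in F}\os(a) \sue \os(b)$, the lemma applied to each strongly exact $F_i$ forces $b \in F_i$ for all $i$, whence $b \in F$. I do not anticipate a genuine obstacle here; the only subtlety worth flagging is precisely the filter-independence of ``being an (strongly) exact meet'', which is what separates these two classes from $\SO(L)$ and $\CP(L)$ --- the latter are in general not closed under intersection, which is why the closure operator $\J$ is applied to them in \eqref{eq:fi-inc}.
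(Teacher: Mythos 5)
Your proof is correct and follows essentially the same route as the paper's: the key observation in both is that being a (strongly) exact meet is a property of the set $M$ alone, so any such meet of elements of $\bigcap_j F_j$ lies in each $F_j$ and hence in the intersection. The extra care you take (the empty intersection, the alternative via Lemma~\ref{l:SE-char}) is fine but not needed beyond what the paper records.
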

\begin{proof}
    If \(\bigcap_j F_j\) is an intersection of (strongly) exact filters and \(\bwe_i x_i\) is (strongly) exact meet such that \(\{x_i\}_i \sue \bigcap_j F_j\) then also \(\bwe_i x_i \in \bigcap_j F_j\) because each \(F_j\) contains \(\bwe_i x_i\). Consequently, \(\bigcap_j F_j\) is also a (strongly) exact filter.
\end{proof}

\subsection{Specific cases of \texorpdfstring{\(\lemb[\F]\colon L \to L\fe\)}{map e}}

In this subsection we take a look at properties of \(\lemb[\F]\) depending on the class \(\F\). To reduce clutter, instead of writing \(\lemb[\SE(L)]\), \(\femb[\SE(L)]\) and \(L^{\SE(L)}\) we simply write \(\lemb[\SE]\), \(\femb[\SE]\) and \(L^{\SE}\) and similarly for other classes of filters such as \(\Ex(L), \SO(L), \CP(L)\) and those that we introduce later on.

Our starting point is the following fact, where item 3 assumes the concrete description \(L\fe = \Int(\F)\) and \eqref{eq:concr-e-k}.

\begin{proposition}\label{p:generalchar}
For a frame $L$ and a collection $\mathcal{F}$ of its filters, the following are equivalent:
\begin{enumerate}
    \item $\lemb[\F]$ is injective;
    \item The frame $L$ is $\F$-separable;
    \item $\lemb[\F](a)= \upset a$ for all $a\in L$;
    \item The collection $\Int(\F)$ contains all principal filters. 
\end{enumerate}
\end{proposition}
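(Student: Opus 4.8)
The plan is to set up the equivalences around (3), taking the already-established (1) $\iff$ (2) from Proposition~\ref{p:fe-basic-props}(6) as a free input. Concretely, I will prove $(2) \Rightarrow (3)$, $(3) \Rightarrow (4)$, $(4) \Rightarrow (3)$, and $(3) \Rightarrow (1)$; together with the known $(1)\iff(2)$ this closes the full cycle.

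The first thing to record, and the fact used throughout, is the unconditional inclusion $\upset a \sue \lemb[\F](a)$. Indeed, by the concrete description in \eqref{eq:concr-e-k} we have $\lemb[\F](a) = \bigcap \{ F\in \F \mid a\in F\}$, and every $F$ appearing in this intersection contains $a$ and is upward closed, hence contains $\upset a$. So in every instance the real content of statement (3) is only the reverse inclusion.

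The main step is $(2) \Rightarrow (3)$. Fix $a\in L$ and $x \in \lemb[\F](a)$; the goal is to deduce $a\leq x$. The idea is to compare $a$ with $a\mee x$ through the filters of $\F$. For any $F\in \F$: if $a\in F$ then $x\in F$ (since $x$ lies in every member of $\F$ containing $a$), so $a\mee x\in F$ by meet-closure; conversely, if $a\mee x\in F$ then $a\in F$ because $a\mee x\leq a$ and $F$ is upward closed. Thus $a$ and $a\mee x$ belong to exactly the same filters of $\F$, and $\F$-separability forces $a = a\mee x$, i.e.\ $a\leq x$. This comparison is the crux; I expect it to be the only nonobvious point, since it is where one must use both the upward closure and the meet-closure of filters simultaneously.

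The remaining implications are routine bookkeeping. For $(3)\Rightarrow(4)$, the image of $\lemb[\F]$ lies in $\Int(\F)$ by \eqref{eq:concr-e-k}, so $\upset a = \lemb[\F](a) \in \Int(\F)$. For $(4)\Rightarrow(3)$, write $\upset a = \bigcap A$ with $A\sue \F$; since $a\in \upset a$, every $F\in A$ contains $a$, whence $A \sue \{F\in\F \mid a\in F\}$ and therefore $\lemb[\F](a) = \bigcap\{F\in\F\mid a\in F\} \sue \bigcap A = \upset a$, which with the unconditional inclusion gives equality. Finally $(3)\Rightarrow(1)$ is immediate, as $\upset a = \upset b$ entails $a=b$ (each principal filter has its generator as least element), so $\lemb[\F]$ is injective. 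No genuine obstacle is anticipated beyond spotting the $a$ versus $a\mee x$ trick.
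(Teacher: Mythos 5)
Your proposal is correct and follows essentially the same route as the paper: (1)$\iff$(2) is cited from Proposition~\ref{p:fe-basic-props}(6), (2)$\iff$(3) rests on the observation that $\upset a \sue \lemb[\F](a)$ always holds so only the reverse inclusion is at stake, and (3)$\iff$(4) uses the concrete description \eqref{eq:concr-e-k}. The only difference is that you spell out the $(2)\Rightarrow(3)$ step explicitly (the comparison of $a$ with $a\mee x$ through the filters of $\F$), which the paper compresses into the unproved remark that $\F$-separability ``may be rephrased'' as $\lemb[\F](a)\sue\upset a$ — a worthwhile elaboration, but not a different proof.
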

\begin{proof}
    The equivalence of (1) and (2) is precisely item 6 of Proposition~\ref{p:fe-basic-props}. Notice that $\F$-separability may be rephrased as having $\lemb[\F](a)\subseteq \upset a$ for all $a\in L$, and that the reverse set inclusion always holds. Therefore, (2) and (3) are equivalent, too. Finally, if (3) holds then (4) follows immediately. If (4) holds, then for each $a\in L$, the principal filter $\upset a$ is an intersection of filters in $\F$. In particular, by \eqref{eq:concr-e-k}, \(\lemb[\F](a) = \bigcap \{ F\in \F \mid a\in F\} = \upset a\) as it is the intersection of all filters containing $a$.
\end{proof}
From item 5 in Proposition~\ref{p:fe-basic-props}, Proposition~\ref{p:generalchar} and the fact that principal filters are automatically exact and strongly exact we deduce the following.
\begin{corollary}\label{c:inj}
   For a frame $L$ the following maps are always frame embeddings.
   \begin{itemize}
       \item $\lemb[\SE]\colon L\to L^{\SE}$,
       \item $\lemb[\Ex]\colon L\to L^{\Ex}$.
   \end{itemize}
\end{corollary}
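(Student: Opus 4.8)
The plan is to reduce the statement to injectivity and then invoke the characterisations already in hand. By item~5 of Proposition~\ref{p:fe-basic-props}, for any class $\F$ the map $\lemb[\F]$ is a frame embedding as soon as it is injective, so it suffices to prove that $\lemb[\SE]$ and $\lemb[\Ex]$ are injective. By the equivalence of (1) and (4) in Proposition~\ref{p:generalchar}, injectivity of $\lemb[\F]$ is in turn equivalent to the condition that $\Int(\F)$ contain every principal filter $\upset a$. So the whole task collapses to checking this membership for the two classes $\F = \SE(L)$ and $\F = \Ex(L)$.

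First I would verify that every principal filter $\upset a$ lies in both $\SE(L)$ and $\Ex(L)$. This is immediate from the definitions: if $M \sue \upset a$, then every element of $M$ dominates $a$, whence $\bigmee M \geq a$ and so $\bigmee M \in \upset a$. Thus $\upset a$ is closed under \emph{all} meets taken in $L$, and in particular under strongly exact meets and under exact meets; therefore $\upset a \in \SE(L)$ and $\upset a \in \Ex(L)$. Having this, the membership in $\Int(\F)$ is automatic, since $\F \sue \Int(\F)$ always (take the one-element family $\{\upset a\}\sue \F$, whose intersection is $\upset a$ itself). Consequently condition (4) of Proposition~\ref{p:generalchar} holds for both classes, which yields injectivity of $\lemb[\SE]$ and $\lemb[\Ex]$, and a final appeal to Proposition~\ref{p:fe-basic-props}(5) upgrades each to a frame embedding.

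There is no genuine obstacle here: the entire mathematical content is the routine observation that principal filters are closed under arbitrary meets, hence \emph{a fortiori} under (strongly) exact meets, so that they already belong to $\SE(L)$ and $\Ex(L)$. The only point deserving a moment's care is to make sure the deduction runs through condition~(4) of Proposition~\ref{p:generalchar} rather than attempting to establish $\F$-separability directly; but since $\Int(\F)$ is by definition the closure of $\F$ under intersections, the inclusion $\F \sue \Int(\F)$ is exactly what turns membership of $\upset a$ in $\F$ into membership in $\Int(\F)$, and the argument is complete.
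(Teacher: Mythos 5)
Your proof is correct and follows essentially the same route as the paper: the paper's (very terse) proof likewise combines Proposition~\ref{p:fe-basic-props}(5), Proposition~\ref{p:generalchar}, and the observation that principal filters are automatically exact and strongly exact. Your write-up merely fills in the routine details (principal filters are closed under \emph{all} meets, and $\F \sue \Int(\F)$), which is exactly what the paper leaves implicit.
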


Furthermore, the following special cases are also easy to establish. Given our concrete construction of $L\fe$, items 1 and 2 are very close to Theorems 2.2.2 and 3.4.2 in \cite{moshier2022some}. Our proof uses only the universal properties of \(L\fe\).

\begin{proposition}
    The map $\lemb[\F]$ preserves a meet $\bwe_i x_i$ if and only if the filters of $\F$ are closed under that meet, that is, if for all $F\in \F$ we have that $x_i\in F$ for all $i\in I$ implies $\bwe_i x_i$. In particular
    \begin{itemize}
        \item $\lemb[\F]$ preserves strongly exact meets if and only if $\F\subseteq \SE(L)$,
        \item $\lemb[\F]$ preserves exact meets if and only if $\F\subseteq \Ex(L)$,
        \item $\lemb[\F]$ preserves all meets if and only if all filters of $\F$ are principal.
    \end{itemize}
\end{proposition}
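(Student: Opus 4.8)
The plan is to read off the displayed biconditional as an immediate instance of Lemma~\ref{l:ye-pres}.1 applied to the polarity $(\F, L, \ni)$ underlying the filter extension. Under the dictionary of \S\ref{s:filter-ext}, the map $\lembF$ is precisely the map $\ye[L]$ of this polarity, the relation $Z$ is $\ni$ (so that $F \mathbin{Z} a$ reads $a\in F$), and for a family $B = \{x_i\}_{i\in I}\sue L$ the infimum $\bigmee B$ computed in $Y = L$ is the meet $\bwe_i x_i$. Lemma~\ref{l:ye-pres}.1 then says that $\lembF(\bwe_i x_i) = \bwe_i \lembF(x_i)$, with the right-hand meet taken in $L\fe$, holds exactly when for every $F\in \F$ the condition $x_i\in F$ for all $i$ entails $\bwe_i x_i\in F$. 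This is literally the statement that every filter in $\F$ is closed under the meet $\bwe_i x_i$, which is the claimed equivalence.

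For the three itemised consequences I would specialise this equivalence to the relevant class of meets, being careful that each item is universally quantified over all meets of the given type. To say that $\lembF$ preserves all strongly exact meets means it preserves $\bwe M$ for every $M\sue L$ with $\bigcap_{a\in M}\os(a) = \os(\bwe M)$; by the biconditional this holds iff every $F\in\F$ is closed under each such meet, which by the definition in \S\ref{par:filt-cls} is exactly the condition $\F\sue\SE(L)$. The exact case is verbatim the same argument with ``exact'' in place of ``strongly exact''.

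For the final item I would invoke the elementary observation that a filter is closed under all meets iff it is principal: if $F$ is closed under arbitrary meets then $\bwe F\in F$ and hence $F = \upset(\bwe F)$, while conversely $\upset a$ is trivially closed under any meet since $x_i\geq a$ for all $i$ forces $\bwe_i x_i\geq a$. Feeding this into the biconditional for every meet simultaneously shows that $\lembF$ preserves all meets iff every $F\in\F$ is closed under all meets iff every $F\in\F$ is principal.

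There is no real obstacle here: once the polarity dictionary is set up, the entire content of the main equivalence is carried by Lemma~\ref{l:ye-pres}.1 (monotonicity of $\lembF$, which makes one of the two inequalities automatic, being already recorded in Proposition~\ref{p:fe-basic-props}). The only points requiring a moment's attention are the ``closed under all meets iff principal'' characterisation used for the third bullet, and the passage from preservation of a single meet to the universally quantified statements $\F\sue\SE(L)$ and $\F\sue\Ex(L)$.
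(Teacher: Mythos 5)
Your proposal is correct and takes essentially the same route as the paper: both hinge on Lemma~\ref{l:ye-pres} applied to the polarity \((\F,L,\ni)\), the only cosmetic difference being that the paper derives the preservation-implies-closure direction directly from the compactness axiom \ref{ax:cf} (itself just Theorem~\ref{t:polarities}.2 restated) while you read both directions off the lemma, correctly flagging that monotonicity of \(\lemb[\F]\) supplies the automatic inequality. Your explicit treatment of the three bullets, including the closed-under-all-meets-iff-principal argument, fills in details the paper leaves as ``in particular''.
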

\begin{proof}
    The left-to-right direction of all items follows directly from Lemma~\ref{l:ye-pres}.
    Conversely, suppose that $\bwe_ie(x_i)=e(\bwe_i x_i)$. If $\{x_i\}_i\sue F$ for some $F\in \F$ then we get that $\bwe e[F]\leq \bwe_i e(x_i)=e(\bwe_i x_i)$. However, then $\bwe_i x_i\in F$ by \ref{ax:cf}.
\end{proof}

\begin{corollary}
 \label{c:e-pres-e+se-meets}
 We have the following.
 \begin{itemize}
     \item The map $\lemb\colon L\to L^{\SE}$ preserves a meet \(\bwe_i x_i\) if and only if \(\bwe_i x_i\) is strongly exact.
     \item The map $\lemb\colon L\to L^{\Ex}$ preserves a meet \(\bwe_i x_i\) if and only if \(\bwe_i x_i\) is exact.
 \end{itemize}
\end{corollary}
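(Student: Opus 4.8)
The plan is to treat the two items as the instances $\F=\SE(L)$ and $\F=\Ex(L)$ of the Proposition immediately preceding the statement. By that Proposition, $\lemb[\SE]$ preserves $\bwe_i x_i$ exactly when every strongly exact filter is closed under this meet, and $\lemb[\Ex]$ preserves it exactly when every exact filter is. Hence everything reduces to the two biconditionals
\[
    \bwe_i x_i \text{ strongly exact} \iff \text{every } F\in\SE(L) \text{ is closed under } \bwe_i x_i,
\]
and its analogue with $\Ex(L)$ and ``exact'' in place of $\SE(L)$ and ``strongly exact''. In each, the left-to-right direction is nothing but the defining closure property of (strongly) exact filters, so the real content is the converse. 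I would prove each converse by contraposition: assuming the meet fails to be (strongly) exact, I exhibit one (strongly) exact filter containing every $x_i$ but not $\bwe_i x_i$. The hard part is precisely checking that these witnessing filters genuinely belong to the class $\SE(L)$, respectively $\Ex(L)$, since membership is itself a closure condition.

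For $\SE(L)$, put $S=\bigcap_i\os(x_i)$ and consider $F=\{\,b\in L\mid S\sue\os(b)\,\}$. Monotonicity of $\os$ together with $\os(b)\cap\os(c)=\os(b\mee c)$ makes $F$ a filter, and each $x_j$ lies in $F$ because $S\sue\os(x_j)$. Since every $x_j\in F$, both inclusions are immediate and one gets $\bigcap_{a\in F}\os(a)=S$, so the membership test defining $F$ becomes verbatim the criterion of Lemma~\ref{l:SE-char}; thus $F$ is strongly exact. Now $\bwe_i x_i\in F$ means $S\sue\os(\bwe_i x_i)$, and as the reverse inclusion $\os(\bwe_i x_i)\sue S$ always holds (again by monotonicity of $\os$), this membership is equivalent to $\bigcap_i\os(x_i)=\os(\bwe_i x_i)$, i.e.\ to $\bwe_i x_i$ being strongly exact. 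So $F$ is a strongly exact filter containing all $x_i$ that omits $\bwe_i x_i$ exactly when strong exactness fails, which is the desired converse.

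For $\Ex(L)$, suppose $\bwe_i x_i$ is not exact and fix a witnessing $b$ with $\bwe_i(x_i\vee b)\not\leq(\bwe_i x_i)\vee b$ (the reverse inequality is automatic). I would take $F=\{\,y\in L\mid y\vee b\geq \bwe_i(x_i\vee b)\,\}$. Using frame distributivity in the form $(y\mee y')\vee b=(y\vee b)\mee(y'\vee b)$ one checks $F$ is a filter, and each $x_j\in F$ since $x_j\vee b\geq\bwe_i(x_i\vee b)$. To see $F$ is exact, let $\bwe M$ be an exact meet with $M\sue F$; then $(\bwe M)\vee b=\bwe_{a\in M}(a\vee b)\geq\bwe_i(x_i\vee b)$, because each $a\in M$ satisfies $a\vee b\geq\bwe_i(x_i\vee b)$, whence $\bwe M\in F$. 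Finally $\bwe_i x_i\notin F$ by the very choice of $b$, so $F$ witnesses the contrapositive and completes the argument.
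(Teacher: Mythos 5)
Your proof is correct and follows the route the paper intends: both items are deduced from the preceding Proposition, which reduces meet-preservation by $\lemb[\F]$ to the statement that every filter in $\F$ is closed under the given meet. The paper states the corollary with no further argument, leaving implicit exactly the converse implications you prove, namely that a meet under which \emph{every} strongly exact (resp.\ exact) filter is closed must itself be strongly exact (resp.\ exact); your contribution is to exhibit the witnessing filters, and both of your witnesses are sound and in fact coincide with objects appearing elsewhere in the paper. The filter $\{b \in L \mid \bigcap_i \os(x_i) \sue \os(b)\}$ is $\stf$ applied to a fitted sublocale, whose strong exactness is Lemma~\ref{l:se-basics} (you rederive it directly from Lemma~\ref{l:SE-char}, which is fine), and your filter $\{y \in L \mid \bwe_i(x_i \vee b) \leq y \vee b\}$ is the locally closed filter $\upset\bigl(\bwe_i(x_i \vee b)\bigr) \diff \upset b$, whose exactness is verified by the same computation as in the first paragraph of the proof of Proposition~\ref{p:Ex-new-char}. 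All the individual verifications (that each witness is a filter, lies in the right class, contains every $x_i$, and contains $\bwe_i x_i$ precisely when the meet is strongly exact, resp.\ exact) check out, so your write-up correctly fills in a step the paper treats as immediate; there is no gap.
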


Finally, we look at some simple facts concerning Scott-open and completely prime filters.
\begin{proposition}
    \label{p:lemb-special}
    \hfill
    \begin{enumerate}
        \item If $\F \sue \SO(L)$ then $\lembF$ preserves directed joins.
        \item If $\SO(L) \sue \F$ and \(L\) is pre-spatial (i.e.\ \(\SO(L)\)-separable) then $\lembF$ is injective.
        \item If $\CP(L) \sue \F$ and \(L\) is spatial (i.e.\ \(\CP(L)\)-separable) then $\lembF$ is injective.
    \end{enumerate}
\end{proposition}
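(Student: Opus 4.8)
The plan is to dispatch items (2) and (3) quickly and to reserve the real work for item (1). For items (2) and (3), I would reduce injectivity of $\lembF$ to separability via the equivalence of (1) and (2) in Proposition~\ref{p:generalchar} (equivalently, item 6 of Proposition~\ref{p:fe-basic-props}), so that it suffices to check that $L$ is $\F$-separable. The key observation is that separability is monotone in the filter class: if $\mathcal{H}\sue\F$ and $L$ is $\mathcal{H}$-separable, then $L$ is $\F$-separable, since any $a,b$ agreeing on every member of the larger class $\F$ in particular agree on every member of $\mathcal{H}$, and are therefore equal. Applying this with $\mathcal{H}=\SO(L)$ yields (2) and with $\mathcal{H}=\CP(L)$ yields (3), using the hypotheses $\SO(L)\sue\F$, $\CP(L)\sue\F$ together with the assumed pre-spatiality, resp.\ spatiality.

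For item (1) I would argue directly in the concrete model $L\fe\cong\Int(\F)$ of Proposition~\ref{p:fext-concr}, where by \eqref{eq:concr-e-k} we have $\lembF(a)=\bigcap\{F\in\F\mid a\in F\}$ and, crucially, joins in $L\fe$ are computed as intersections of filters in $\Fi(L)$. Fixing a directed $D\sue L$ with join $d=\bigvee D$, preservation of this join amounts to the set equality
\[
    \lembF(d) = \bigcap_{a\in D}\lembF(a).
\]
I would rewrite both sides as intersections of subfamilies of $\F$: the left-hand side is $\bigcap\{F\in\F\mid d\in F\}$, while the right-hand side, after collecting index families, equals $\bigcap\{F\in\F\mid F\cap D\neq\emptyset\}$. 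Thus the problem reduces to comparing the two index families $\{F\mid F\cap D\neq\emptyset\}$ and $\{F\mid d\in F\}$.

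The inclusion $\lembF(d)\sue\bigcap_{a\in D}\lembF(a)$ holds for free: each $a\in D$ satisfies $a\leq d$, so by monotonicity of $\lembF$ (Proposition~\ref{p:fe-basic-props}.3) we get $\lembF(d)\sue\lembF(a)$ for every $a$, hence $\lembF(d)\sue\bigcap_{a\in D}\lembF(a)$; equivalently, the first index family is contained in the second because filters are upward closed. The reverse inclusion is the only real obstacle, and it is precisely where $\F\sue\SO(L)$ is indispensable: I must show that every $F\in\F$ with $d\in F$ already meets $D$. This is exactly Scott-openness of $F$ applied to the directed family $D$, which from $\bigvee D\in F$ produces some $a\in D$ with $a\in F$, i.e.\ $F\cap D\neq\emptyset$. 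Hence the two index families coincide, the intersections agree, and $\lembF$ preserves the directed join. Beyond correctly tracking that the relevant join in $L\fe$ is the filter intersection, I expect no further difficulty.
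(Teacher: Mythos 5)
Your proposal is correct, and items (2) and (3) are handled exactly as in the paper: the paper disposes of them by citing Proposition~\ref{p:fe-basic-props}.6, leaving implicit the monotonicity of separability in the filter class ($\mathcal{H}\sue\F$ and $\mathcal{H}$-separability imply $\F$-separability) that you spell out explicitly --- a small but genuine gap-filling. For item (1), your route differs in implementation though not in its crux. The paper never leaves the abstract setting: it invokes \ref{ax:df} to reduce $\lembF(\bigvee D)\leq\bigvee\lembF(D)$ to showing that every $F\in\F$ with $\bigmee\lembF[F]\leq\lembF(\bigvee D)$ satisfies $\bigmee\lembF[F]\leq\bigvee\lembF(D)$; it then applies \ref{ax:cf} to conclude $\bigvee D\in F$, and Scott-openness to find $d\in D\cap F$, giving $\bigmee\lembF[F]\leq\lembF(d)\leq\bigvee\lembF(D)$. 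You instead descend to the concrete model $L\fe\cong\Int(\F)$ of Proposition~\ref{p:fext-concr}, where joins are intersections, and verify the set equality
\[
    \bigcap\{F\in\F\mid \textstyle\bigvee D\in F\}
    \;=\;
    \bigcap\{F\in\F\mid F\cap D\neq\emptyset\}
\]
by comparing index families: upward closure of filters in one direction, Scott-openness in the other. The pivotal step --- Scott-openness turning $\bigvee D\in F$ into $F\cap D\neq\emptyset$ --- is identical in both arguments. What the paper's version buys is representation-independence: it uses only the universal properties \ref{ax:df} and \ref{ax:cf}, in line with the paper's explicit preference elsewhere for proofs from the universal property alone, so it works for any realization of $L\fe$. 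What yours buys is a transparent, purely set-theoretic computation that never needs the compactness axiom \ref{ax:cf} once the concrete description is granted; the cost is that it leans on Proposition~\ref{p:fext-concr}, on \eqref{eq:concr-e-k}, and on the fact that joins in $\Int(\F)$ are intersections, all of which encapsulate the polarity machinery.
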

\begin{proof}
    (1) We use \ref{ax:df} to show that $\lemb(\bigvee D) \leq \bigvee \lemb(D)$ for any directed $D\sue L$. Let $F\in \F$ be arbitrary, such that $\bwe \lemb[][F] \leq \lemb(\bigvee D)$. By \ref{ax:cf}, $\bigvee D\in F$ and, since \(F\) is Scott-open, there is a $d\in D$ such that $d\in F$. Therefore, $\bwe \lemb[][F] \leq \lemb(d) \leq \bigvee \lemb(D)$ as required. (2) and (3) follow directly from Proposition~\ref{p:fe-basic-props}.6.
\end{proof}

\section{Comparing classes of filters}
\label{s:filt-classes}
In this section, we look at the restrictions of strongly exact filters from the diagram \eqref{eq:fi-inc} in the Introduction.
In particular, we study the joins (intersections) of \emph{regular}, exact, completely prime and Scott-open filters.
Note that all these classes of the form \(L\fe = \J(\F)\), that is, they are closed under joins in \(\Fi(L)\). For exact and strongly exact filters this follows from Lemma~\ref{l:se-e-j} and for the other classes it follows by definition.

Apart from proving the non-obvious inclusions between the classes of filters in the diagram \eqref{eq:fi-inc}, Corollary~\ref{c:all-subcoloc} below shows that these are in fact subcolocale inclusions.

\subsection{Scott-open filters}

We start by remarking that joins of Scott-open filters are strongly exact. The base case of this fact is due to Johnstone (Lemma~3.4 in~\cite{johnstone1985vietoris}, see also \cite{escardo2003joins} and \cite{vickers1997constructive}) who showed that Scott-open filters are strongly exact. 

Johnstone's proof uses a transfinite induction on nuclei, whereas the proof in \cite{escardo2003joins} uses Pataraia's fixed-point theorem for pre-nuclei and \cite{vickers1997constructive} is written in terms of generalised points of powerlocales. We report here a simple explicit proof of the result which, on the other hand, uses Zorn's Lemma.
\begin{lemma}\label{1}
    In a frame $L$ and for $x,y,z\in L$ we have $x\to y=y$ if and only if $z > y$ implies $z \wedge x \nleq y$.
\end{lemma}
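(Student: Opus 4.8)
The plan is to characterize the condition $x \to y = y$ directly in terms of the Heyting structure and then translate it into the elementwise statement about $z > y$. Recall that in a frame (Heyting algebra), $x \to y$ is the largest element $w$ with $w \wedge x \leq y$, and that $y \leq x \to y$ always holds. So the equation $x \to y = y$ is equivalent to saying that $y$ is the \emph{largest} element whose meet with $x$ lies below $y$; equivalently, no element strictly above $y$ has this property.

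For the forward direction, I would assume $x \to y = y$ and take any $z > y$. Since $y$ is by assumption the whole of $x \to y$, the element $z$ cannot satisfy $z \wedge x \leq y$ — otherwise $z$ would be below $x \to y = y$, contradicting $z > y$. Hence $z \wedge x \nleq y$. This direction is essentially immediate from the adjunction $w \wedge x \leq y \iff w \leq x \to y$.

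For the converse, I would assume the implication ``$z > y \Rightarrow z \wedge x \nleq y$'' and aim to show $x \to y \leq y$ (the reverse inequality being automatic). Set $z = x \to y$. We always have $z \wedge x \leq y$ by the defining property of Heyting implication. The contrapositive of the hypothesis then says that $z \wedge x \leq y$ forces $z \not> y$; combined with $y \leq z$, this gives $z = y$, i.e.\ $x \to y = y$. This is the crux of the argument, and the only subtlety is making sure the hypothesis is applied to the right witness, namely $z = x\to y$ itself.

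I expect the main (though minor) obstacle to be purely bookkeeping: being careful about strict versus non-strict inequalities, since the statement is phrased with $z > y$ whereas the adjunction naturally produces $\leq$. The whole argument rests only on the two basic facts $y \leq x \to y$ and $(x \to y)\wedge x \leq y$, both of which are standard consequences of the Heyting adjunction already recalled in Section~\ref{s:frm-basics}, so no transfinite machinery or choice principle is needed for this particular lemma.
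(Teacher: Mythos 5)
Your proof is correct, and in the crucial right-to-left direction it takes a genuinely different (and more economical) route than the paper. The paper first uses the adjunction to rewrite $x \to y = y$ as ``for all $z$, $z \nleq y$ implies $z \wedge x \nleq y$'', and then must bridge the gap between the non-strict condition $z \nleq y$ and the strict hypothesis $z > y$: it does so by passing from $z$ to $z \vee y > y$ and invoking distributivity, $(z\vee y)\wedge x = (z\wedge x)\vee(y\wedge x) \nleq y$, to recover $z \wedge x \nleq y$. You avoid that bridging step entirely by instantiating the hypothesis at the single witness $z = x \to y$: from $(x\to y)\wedge x \leq y$ and the contrapositive of the hypothesis you get $\neg(x \to y > y)$, which together with the automatic inequality $y \leq x\to y$ forces $x \to y = y$. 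Your argument uses only the two basic Heyting facts and no distributivity, so it is shorter and slightly more general in spirit; what the paper's detour buys is the explicit intermediate characterisation of $x\to y = y$ in terms of all $z \nleq y$, but since that reformulation is not used elsewhere, nothing is lost by your shortcut.
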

\begin{proof}
Note that $x\to y = y$ if and only if $z\nleq y$ implies that $z\wedge x\nleq y$. We claim that this is equivalent to having $z > y$ implying $z \wedge x \nleq y$. It is clear that the first condition implies the second. For the converse, if $z\nleq y$, then we have $z\vee y>y$. Since the second condition holds, we have $(z\vee y)\wedge x=(z\wedge x)\vee(y\wedge x)\nleq y$, and so we must have $z\wedge x\nleq y$.
\end{proof}

\begin{proposition}
    Scott-open filters are strongly exact.
    \label{l:SO-SE}
    \label{p:so-vs-se}
\end{proposition}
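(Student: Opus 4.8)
The plan is to show that every Scott-open filter $F$ is strongly exact by verifying the characterisation in Lemma~\ref{l:SE-char}: for any $b\in L$, if $\bigcap_{a\in F}\os(a)\sue\os(b)$ then $b\in F$. Equivalently, I want to find, whenever $b\notin F$, some witness $z\in L$ lying in the sublocale $\bigcap_{a\in F}\os(a)$ but not in $\os(b)$. Recalling that $\os(a)=\{x\mid a\to x = x\}$, membership of $z$ in $\bigcap_{a\in F}\os(a)$ means $a\to z = z$ for all $a\in F$, and $z\notin\os(b)$ means $b\to z\neq z$, i.e.\ $z < b\to z$. So the goal reduces to the following order-theoretic statement: if $b\notin F$ then there exists $z$ with $a\to z = z$ for every $a\in F$ and with $b\to z > z$.

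First I would use Lemma~\ref{1}, which reformulates the fixpoint condition $a\to z = z$ purely in terms of the order: $a\to z = z$ holds iff every $w > z$ satisfies $w\wedge a\nleq z$. This is the key device that lets me avoid nuclei and transfinite induction entirely. So the condition ``$z$ is a fixpoint for every $a\in F$'' becomes: for all $a\in F$ and all $w > z$, $w\wedge a\nleq z$. Meanwhile the failure $b\to z > z$ I would arrange by ensuring $z$ is chosen maximal among elements \emph{not above some target}; the natural target is an element of $F$, exploiting that $F$ is a filter and $b\notin F$.

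The main construction is a Zorn's Lemma argument. Fix $b\notin F$ and consider the set $P = \{ z\in L \mid z\ngeq c \text{ for some (hence the relevant) element related to } F,\ \text{and } b\to z > z\}$; more precisely I would take $z$ maximal subject to the constraint that keeps $b$ from being ``absorbed'', while using Scott-openness to control directed suprema. Concretely, one chooses $z$ to be maximal in $\{x \mid b\vee x \notin F\}$ — this set is nonempty (it contains, say, suitable small elements since $b\notin F$ and $F$ is upward closed) and closed under directed joins precisely because $F$ is \emph{Scott-open}: if $\{x_i\}$ is directed with each $b\vee x_i\notin F$ but $b\vee\bigvee^\uparrow x_i = \bigvee^\uparrow(b\vee x_i)\in F$, then Scott-openness forces some $b\vee x_i\in F$, a contradiction. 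Hence Zorn's Lemma yields a maximal such $z$. Maximality then gives $b\vee z > z$ would push us out of the set, which translates (via the adjunction $a\wedge z\le z \iff a\le z\to\ldots$) into $b\to z > z$, so $z\notin\os(b)$; and for each $a\in F$, maximality combined with $b\vee(z\vee a\text{-type element})\in F$ shows any $w>z$ meets $a$ above $z$, i.e.\ $w\wedge a\nleq z$, giving $a\to z = z$ by Lemma~\ref{1}. Thus $z$ is the desired witness and $F$ is strongly exact.

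The step I expect to be the main obstacle is pinning down the exact set over which to apply Zorn's Lemma so that (i) it is genuinely closed under the directed joins that Scott-openness controls, and (ii) the maximal element simultaneously satisfies \emph{both} the fixpoint conditions $a\to z=z$ for all $a\in F$ \emph{and} the separation condition $b\to z>z$. Getting a single maximal $z$ to do both jobs is delicate: the interaction between maximality, the filter closure properties of $F$, and Lemma~\ref{1} must be balanced so that enlarging $z$ in any direction either violates the $F$-avoidance constraint (securing the fixpoint property) or is simply impossible (securing separation from $b$).
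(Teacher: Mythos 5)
Your proposal is correct and, despite the different packaging (you verify the characterisation of Lemma~\ref{l:SE-char} rather than closure under strongly exact meets), it is essentially the paper's own proof: Zorn's Lemma plus Scott-openness produces a maximal element, to which Lemma~\ref{1} is then applied. The difficulty you flag in your last paragraph evaporates once you observe that $b \vee z$ itself lies in your Zorn set $\{x \mid b \vee x \notin F\}$, so maximality forces $b \leq z$ — your $z$ is therefore exactly the paper's maximal element of $\upset b \setminus F$ — and then both jobs are done at once: $b \leq z$ gives $b \to z = 1 \neq z$ (as $1 \in F$ but $z = b \vee z \notin F$), while for $a \in F$ and $w > z$ maximality gives $w = b \vee w \in F$, hence $w \wedge a \in F$, hence $w \wedge a \nleq z$, so $a \to z = z$ by Lemma~\ref{1}.
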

\begin{proof}
    Suppose that $F$ is a Scott-open filter of a frame $L$, and that $x\notin F$. Our argument will show that $x$ cannot be a strongly exact meet of elements in F. First, we claim that there must be some $y \in L$ which is maximal among the elements of ${\uparrow}x$ which are not in $F$. If this were
    not the case, then by Zorn's Lemma we could find an infinite ascending chain $x \leq x_1 < ... < x_n < ...$, outside of $F$ but with supremum in \(F\). This would contradict Scott-openness of $F$. Then, let $m \in L$ be such an element, and in particular, observe $\upset m \cap F = \upset m \setminus \{m\}$. Suppose that $x =\bwe_i x_i$ with $\{x_i\}_i\sue F$, we show that the meet cannot be strongly exact. Observe that, for each $i\in I$, and for $z>m$, we must have $z\wedge x_i\nleq m$, because $z\wedge x_i\in F$ and $m\notin F$. Then, by Lemma \ref{1}, $x_i\to m=m$ and, consequently, \(m\in \bigcap_i \os(x_i)\). On the other hand, since $x \leq m$, we have $x \to m= 1 \neq m$ and \(m \notin \os(x)\). Then, $x$ can never be an exact meet of elements in $F$.
\end{proof}

Because strongly exact filters are closed under intersection (Lemma~\ref{l:se-e-j}), Proposition~\ref{p:so-vs-se} entails the desired inclusion from \eqref{eq:fi-inc}.
\begin{corollary}
    \(\Int(\SO(L)) \sue \SE(L)\).
\end{corollary}

\subsection{Completely prime filters}
The class \(\CP(L)\) of completely prime filters is not, in general, closed under intersections. It is clear that we have $\CP(L)\subseteq \SO(L)$. In this special case we can actually describe the filter extension in terms of the dual space \(\pt(L)\) of the frame \(L\). In the following, we regard the points of $\pt(L)$ as completely prime filters \(\CP(L)\).
\begin{lemma}
    \(\Int(\CP(L))\cong \mathcal{U}(\pt(L))\) where, for a topological space \(X\), \(\mathcal{U}(X)\) is the lattice of upsets in the specialisation order of \(X\). The isomorphism is given by
    \[
    P\mapsto \upset P=\{Q\in \CP(L) \mid P\subseteq Q\}.
    \]
\end{lemma}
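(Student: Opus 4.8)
The plan is to exhibit the stated assignment as one half of a pair of mutually inverse monotone maps. First I would record that, viewing points as completely prime filters, the specialisation order on $\pt(L)$ is exactly set inclusion: for $P,Q\in\CP(L)$ the point $P$ lies in the closure of $\{Q\}$ iff every basic open $\{R\mid a\in R\}$ containing $P$ contains $Q$, i.e.\ iff $a\in P$ implies $a\in Q$, i.e.\ iff $P\sue Q$. Hence $\mathcal U(\pt(L))$ is the lattice of $\sue$-upward closed subsets of $\CP(L)$ ordered by inclusion, and $\upset P=\{Q\in\CP(L)\mid P\sue Q\}$ is indeed such an upset. I would then define
\[
    \Phi\colon G\mapsto\{Q\in\CP(L)\mid G\sue Q\}
    \qtq{and}
    \Psi\colon \mathcal V\mapsto\textstyle\bigcap\mathcal V,
\]
as maps $\Phi\colon\Int(\CP(L))\to\mathcal U(\pt(L))$ and $\Psi\colon\mathcal U(\pt(L))\to\Int(\CP(L))$. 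Both are well defined: $\Phi(G)$ is a $\sue$-upset since $G\sue Q\sue Q'$ forces $G\sue Q'$, and $\Psi(\mathcal V)$ is an intersection of completely prime filters, hence lies in $\Int(\CP(L))$. Monotonicity is immediate, as $\Phi$ sends the reverse-inclusion order $\sqleq$ to inclusion of upsets and $\Psi$ does the reverse; so it remains to verify that the two maps are mutually inverse, after which $\Phi$ restricted to a singleton intersection $P$ is $\Phi(P)=\upset P$, the formula in the statement.

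For $\Psi\circ\Phi=\id$, fix $G=\bigcap A$ with $A\sue\CP(L)$. Each $P\in A$ contains $G$, so $A\sue\Phi(G)$ and therefore $\bigcap\Phi(G)\sue\bigcap A=G$; the reverse inclusion $G\sue\bigcap\Phi(G)$ is automatic since every member of $\Phi(G)$ contains $G$. Thus $\Psi(\Phi(G))=\bigcap\Phi(G)=G$, which is the easy composite.

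The main work is $\Phi\circ\Psi=\id$, i.e.\ for a $\sue$-upset $\mathcal V$ of completely prime filters one needs $\{Q\in\CP(L)\mid\bigcap\mathcal V\sue Q\}=\mathcal V$. The inclusion $\mathcal V\sue\{Q\mid\bigcap\mathcal V\sue Q\}$ is clear. For the converse, which is the crux of the statement, suppose $Q\in\CP(L)$ satisfies $\bigcap\mathcal V\sue Q$; I would produce some $P\in\mathcal V$ with $P\sue Q$ and then invoke upward closure of $\mathcal V$. Let $q=\bigvee\{a\in L\mid a\notin Q\}$ be the prime element attached to $Q$. Complete primeness of $Q$ gives $q\notin Q$, hence $q\notin\bigcap\mathcal V$, so there is $P\in\mathcal V$ with $q\notin P$. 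This $P$ satisfies $P\sue Q$: if $a\in P$ and $a\le q$ then $q\in P$ by upward closure of the filter $P$, contradicting $q\notin P$; hence $a\not\le q$, which by the defining property $a\in Q\iff a\not\le q$ of $q$ means $a\in Q$. Since $\mathcal V$ is an upset and $P\sue Q$, we conclude $Q\in\mathcal V$. With both composites equal to the identity, $\Phi$ is an order isomorphism, and $\Phi(P)=\{Q\mid P\sue Q\}=\upset P$ recovers the stated formula. The one delicate point is precisely this last argument, namely passing from the purely order-theoretic hypothesis $\bigcap\mathcal V\sue Q$ to the membership $Q\in\mathcal V$, which is where complete primeness of $Q$ (through its prime element $q$) is indispensable.
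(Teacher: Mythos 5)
Your proof is correct, and its skeleton matches the paper's: everything reduces to the fact that a completely prime filter $Q$ containing an intersection of completely prime filters must contain one of them, after which upward closure of $\mathcal{V}$ finishes the argument. The difference lies in how this crux is established. The paper proves it by contradiction: it chooses a witness $a_i \in P_i \setminus Q$ for each $i$ and applies complete primeness of $Q$ to the join $\bigvee_i a_i$, which lies in every $P_i$ by upward closure; this is stated as the general fact that completely prime filters are completely join-prime in $\Fi(L)$ (over arbitrary filters, not just completely prime ones). You instead apply complete primeness once, to the canonical join $q = \bigvee\{a \in L \mid a \notin Q\}$, to get $q \notin Q$, then extract some $P \in \mathcal{V}$ with $q \notin P$ and observe that any filter missing $q$ is contained in $Q$ (via the equivalence $a \in Q \iff a \nleq q$). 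Your variant is choice-free (no simultaneous selection of witnesses $a_i$) and makes the dictionary between completely prime filters and prime elements carry the load; it also generalises verbatim to arbitrary filters, since only upward closure of $P$ is used, so nothing is lost in strength. On the packaging side, the paper is terser --- surjectivity of the stated map plus the remark that injectivity and order-reflection are clear from the definition --- whereas your explicit mutually inverse pair $\Phi$, $\Psi$ is more laborious but has the small added benefit of verifying that the specialisation order on $\pt(L)$ really is inclusion of completely prime filters, a point the paper leaves implicit.
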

\begin{proof}
    First, we observe that every \(Q\in \CP(L)\) is completely join prime in \(\Fi(L)\), that is, $Q \sqleq \bigsqcup_i F_i$ (i.e.\ \(\bigcap_i F_i \sue Q\)) implies $Q \sqleq F_i$ (i.e.\ \(F_i \sue Q\)) for some~$i$. Assume by contradiction that, for every \(i\), \(Q \not\sqleq F_i\) i.e.\ there is some \(a_i \in F_i \setminus Q\). Then, \(\bigvee_i a_i \in \bigcap_i F_i \sue Q\) and so \(a_i \in Q\) for some~\(i\), a contradiction.

    Observe that this shows that the map described in the statement is a surjection: any upset $\{P_i \mid i\in I\}$ of completely prime filters is equal to $\{Q\in \CP(L) \mid \bca_i P_i\subseteq Q\}$. From the definition, it is also clear that it is an injection which preserves and reflects order. Therefore, it is an isomorphism.
\end{proof}

\subsection{Exact filters}

The fact that exact filters are a special kind of strongly exact filters
\[
    \Ex(L) \sue \SE(L)
\]
has been observed earlier (cf.\ Remark 3.5 in \cite{moshier2020exact}).
We also make comparison of exact filters with regular filters, which we introduce later on. For that we need the following new characterisation of exact filters.
\begin{proposition}
    \label{p:Ex-new-char}
    For a filter \(F\sue L\) is exact if and only if it is an intersection of filters of the form
    \[
    \upset y \diff \upset x = \{a\in L \mid y\leq a\vee x\}
    \]
    for some $x,y\in L$. In fact, \(F = \bigcap \{ \upset y \diff \upset x \mid \forall f \in F\ y \leq f \vee x\}\) which rewrites as
    \begin{equation}
        F = \{ a \in L \mid \forall x,y\in L \ (\forall f\in F\ y \leq f \vee x) \Rightarrow y \leq a \vee x\}.
        \tag{ex}
        \label{eq:Ex-char}
    \end{equation}
\end{proposition}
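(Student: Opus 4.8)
The plan is to prove the biconditional by establishing the explicit identity at the same time. First I would record the concrete description of the auxiliary filters. Using the formula for the coframe difference in $\Fi(L)$, namely $H \diff G = \{a \mid \forall b\in G,\ b \vee a \in H\}$, one computes $\upset y \diff \upset x = \{a \mid \forall b \geq x,\ y \leq a \vee b\} = \{a \mid y \leq a \vee x\}$, where the second equality uses $b = x$ together with monotonicity of $\vee$. A quick check via frame distributivity, $(a \wedge a')\vee x = (a \vee x)\wedge(a'\vee x)$, confirms this is indeed a filter.

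The backward implication is the easy half. I would show each $\upset y \diff \upset x$ is exact: if $\bigwedge M$ is an exact meet with $M \subseteq \upset y \diff \upset x$, then the defining property of an exact meet at $b = x$ gives $(\bigwedge M)\vee x = \bigwedge_{a\in M}(a \vee x) \geq y$, so $\bigwedge M \in \upset y \diff \upset x$. Since $\Int(\Ex(L)) = \Ex(L)$ by Lemma~\ref{l:se-e-j}, any intersection of filters of this form is again exact, which is exactly the backward direction.

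For the forward implication I would prove the identity $F = G$, where $G := \bigcap\{\upset y \diff \upset x \mid \forall f\in F,\ y \leq f \vee x\}$ is the right-hand side of \eqref{eq:Ex-char}; this simultaneously exhibits $F$ as an intersection of the required shape. The inclusion $F \subseteq G$ is immediate and holds for any filter, since each indexing pair $(x,y)$ satisfies $F \subseteq \upset y \diff \upset x$ by construction. The content is the reverse inclusion $G \subseteq F$, and this is where exactness of $F$ must be used. I would first rephrase membership: for fixed $x$ the largest admissible $y$ is $\bigwedge_{f\in F}(f \vee x)$, so $a \in G$ is equivalent to $\bigwedge_{f\in F}(f \vee x) \leq a \vee x$ holding for every $x\in L$.

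The key move, which I expect to be the main obstacle, is to manufacture an exact meet equal to $a$ out of elements of $F$. Given $a \in G$, consider the family $M = \{f \vee a \mid f\in F\}$. Instantiating the rephrased condition at $x = a$ yields $\bigwedge_{f}(f \vee a) \leq a$, and the reverse inequality is trivial, so $\bigwedge M = a$. I then claim this meet is exact: for any $b$, instantiating the condition at $x = a \vee b$ gives $\bigwedge_{f}(f \vee a \vee b) \leq a \vee b$, and again the reverse inequality is trivial, so $(\bigwedge M)\vee b = a \vee b = \bigwedge_{m\in M}(m \vee b)$, which is precisely the defining property of an exact meet. Since $F$ is upward closed we have $M \subseteq F$, and since $F$ is exact it is closed under this exact meet, giving $a = \bigwedge M \in F$. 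This establishes $G \subseteq F$, hence $F = G$, completing the forward direction and proving formula \eqref{eq:Ex-char}.
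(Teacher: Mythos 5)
Your proof is correct and follows essentially the same route as the paper's: the backward direction rests on the same computation (the exact-meet property instantiated at $b = x$), and the forward direction exhibits $a \in G$ as the exact meet $\bigwedge_{f\in F}(f \vee a)$ using exactly the paper's instantiations $x = a$ and $x = a \vee b$. The only cosmetic difference is that you show each $\upset y \diff \upset x$ is exact and then invoke Lemma~\ref{l:se-e-j} for intersections, whereas the paper runs the same computation directly on an arbitrary intersection.
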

\begin{proof}
    From the right-to-left implication assume \(\bigmee_i a_i\) is an exact meet with \(\{a_i\}_i \sue F\). Let \(x,y\) be such that for every \(f\) in \(F\),  \(y \leq f \vee x\). Then, in particular, for every \(i\), \(y \leq a_i \vee x\). Therefore, \(y \leq \bigmee_i (a_i \vee x) = (\bigmee_i a_i) \vee x\) and so \(\bigmee_i a_i\in F\).

    Conversely, denote by \(G\) the right-hand side of \eqref{eq:Ex-char} above. We wish to prove \(F = G\). Observe that \(F \sue G\) is immediate from the definition. For the converse inclusion, let \(a\in G\). In order to show \(a\in F\), it is enough to prove that
    \begin{enumerate}
        \item[(a)] \(a = \bigmee_{f\in F} (f \vee a)\), and
        \item[(b)] the meet in (a) above is exact.
    \end{enumerate}
    For (a) we only need to show \(\bigmee_{f\in F} (f \vee a) \leq a\). Taking \(y = \bigmee_{f\in F} (f \vee a)\) and \(x = a\), observe that we have \(y \leq f \vee a\) for every \(f\in F\). Therefore, since \(a\in G\), we have the desired \(y = \bigmee_{f\in F} (f \vee a) \leq a \vee x = a\).

    For (b) let \(b\in L\) be arbitrary. We need to show that
    \[
        a \vee b = \bigmee_{f\in F} (f \vee a) \vee b \geq \bigmee_{f\in F} (f \vee a \vee b)
    \]
    where the first equality holds by (1). Set \(x = a\vee b\) and \(y = \bigmee_{f\in F} (f \vee a \vee b)\). Observe that trivially, for every \(f'\in F\), \(y \leq f' \vee x\). Therefore, since \(a\in G\), the desired inequality follows: \(y = \bigmee_{f\in F} (f \vee a \vee b) \leq a \vee x = a \vee b\).
\end{proof}

\subsection{Closed and regular filters}
\label{s:Cl-R-filt}

Finally, we introduce a class of filters that (as we explain in Section~\ref{s:closed-open}) corresponds to fittings of closed sublocales. Define
\begin{itemize}
    \item \df{closed filters} \(\Cl(L)\) as the filters of the form \(\{ x \in L\mid a \vee x = 1\}\) for some \(a\in L\).
\end{itemize}
As usual, we order \(\Cl(L)\) by the reverse inclusion order, denoted by \(\sqleq\). It turns out that closed filters are the supplements of principal filters in \(\Fi(L)\):
\begin{align}
    (\upset a)\spp
    = \{1\} \diff \upset a
    = \{ x\in L \mid \forall a' \geq a.\ x \vee a' = 1\}
    = \{ x\in L \mid x \vee a = 1\}.
    \label{eq:cl-supp-open}
\end{align}
The class of intersections of closed filters has a very natural description. Let
\begin{itemize}
    \item \df{regular filters} \(\R(L)\) be the filters of the form \(\{1\} \diff F\) for some filter \(F\sue L\).
\end{itemize}

\begin{lemma}
    \label{l:regular}
    Regular filters coincide with intersections of closed filters:
    \(\R(L) = \Int(\Cl(L)).\)
\end{lemma}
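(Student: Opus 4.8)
The plan is to prove the two inclusions separately, after first unfolding both sides into explicit descriptions. Applying the difference formula for $\Fi(L)$ from Section~\ref{s:filt} with the top filter $\{1\}$ in place of $H$, a regular filter is
\[
    \{1\} \diff F = \{ a \in L \mid \forall b \in F.\ a \vee b = 1\},
\]
while, by \eqref{eq:cl-supp-open}, an arbitrary intersection of closed filters is
\[
    \bigcap_{i} (\upset a_i)\spp = \{ x \in L \mid \forall i.\ x \vee a_i = 1\}.
\]
These two normal forms reduce the whole statement to matching up the two indexing families.

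For $\R(L) \sue \Int(\Cl(L))$, I would take a regular filter $\{1\} \diff F$ and read off from the first display that it is literally the intersection $\bigcap_{b \in F} (\upset b)\spp$ of the closed filters supplementing the principal filters $\upset b$ with $b \in F$; hence it lies in $\Int(\Cl(L))$ with no further computation. Conceptually this is the identity $\{1\} \diff F = \bigsqcup_{b \in F}(\{1\} \diff \upset b)$, i.e.\ the fact that in the coframe $\Fi(L)$ supplementation carries the $\sqleq$-meet $F = \bigsqcap_{b \in F} \upset b$ to the $\sqleq$-join (intersection) of the corresponding supplements; this also furnishes an alternative, purely algebraic proof of both inclusions at once, via the coframe De Morgan law $(\bigsqcap_i G_i)\spp = \bigsqcup_i G_i\spp$.

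For the reverse inclusion $\Int(\Cl(L)) \sue \R(L)$, I would start from an intersection $\bigcap_i (\upset a_i)\spp$ and propose as witness the filter $G$ generated by $\{a_i\}_i$, claiming $\bigcap_i (\upset a_i)\spp = \{1\} \diff G$. The inclusion $\supseteq$ is immediate since each $a_i \in G$. For $\sue$, given $x$ with $x \vee a_i = 1$ for all $i$ and an arbitrary $b \in G$, I would use that $b$ lies above some finite meet $a_{i_1} \wedge \dots \wedge a_{i_n}$ and compute
\[
    x \vee b \;\geq\; x \vee (a_{i_1} \wedge \dots \wedge a_{i_n}) \;=\; (x \vee a_{i_1}) \wedge \dots \wedge (x \vee a_{i_n}) \;=\; 1,
\]
the middle equality being finite distributivity of $\vee$ over $\wedge$ in the distributive lattice $L$. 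This is the only genuine calculation and the single point where the finite meets generating $G$ and distributivity are needed, so I expect it to be the main (though modest) obstacle. Combining the two inclusions yields $\R(L) = \Int(\Cl(L))$.
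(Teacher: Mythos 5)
Your proof is correct, and its two halves check out: the unfolding of $\{1\} \diff F$ via the difference formula, the witness $G$ generated by $\{a_i\}_i$, and the finite distributivity computation are all sound (the empty family is also harmless, since it yields $G = \{1\}$ and $\{1\} \diff \{1\} = L = \cf(1)$). The paper, however, takes as its \emph{primary} proof exactly what you relegate to a parenthetical remark: it observes that in the coframe $\Fi(L)$ the difference turns meets in the second coordinate into joins, so that $\{1\} \diff \bigl(\bigsqcap_{a\in A} \upset a\bigr) = \bigsqcup_{a\in A} \bigl(\{1\} \diff \upset a\bigr)$, and then uses $G = \bigsqcap_{b\in G} \upset b$ to get both inclusions at once in two lines. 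Your main argument is the element-wise version of this: your easy inclusion $\R(L) \sue \Int(\Cl(L))$ is the trivial direction of that De Morgan instance, and your distributivity computation $x \vee (a_{i_1} \wedge \dots \wedge a_{i_n}) = (x \vee a_{i_1}) \wedge \dots \wedge (x \vee a_{i_n})$ is precisely a hands-on verification of the nontrivial direction. What your route buys is self-containedness: it needs only the set-theoretic formula for $\diff$ and finite distributivity in $L$, not the fact (cited by the paper from the literature) that $\Fi(L)$ is a coframe satisfying the abstract law. What the paper's route buys is brevity and conceptual clarity: the lemma is exposed as a purely formal consequence of coframe structure, with no computation in $L$ at all.
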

\begin{proof}
    Recall from \eqref{eq:cl-supp-open} that the closed filters are of the form \(\{1\} \diff \upset a\) for some \(a\in L\). Then, for a subset \(A\sue L\), since difference reverts meets in the second coordinate into joins, we obtain that \(\{1\} \diff (\bigsqcap_{a\in A} \upset a) = \bigsqcup_{a\in A} (\{1\} \diff \upset a)\). The result follows from the fact that every filter \(G\in \Fi(L)\) is equal to the meet \(\bigsqcap_{b\in G} \upset b\) in \(\Fi(L)\).
\end{proof}

The \df{Booleanization} \(\B(L)\) of a frame \(L\) is a standard construction in frame theory. It is the sublocale \(\B(L)\sue L\) consisting of all regular elements of \(L\), that is, the elements of the form \(a\to 0\).
When we dualise the situation to coframes, the (co)Booleanization \(\B(C)\) of a coframe \(C\) consists of all supplements \(a\spp\), i.e.\ elements of the form \(1 \diff a\).

In case when the coframe is \(\Fi(L)\), by definition \(\R(L)\) is defined precisely as \(\B(\Fi(L))\).
Therefore, from being a subcolocale, \(\R(L)\) is closed under joins (intersections) in \(\Fi(L)\) and \(\R(L) \cong L^\R\).
Consequently, we obtain the following.

\begin{corollary}
    For a frame \(L\), $L^\Cl$ and $L^\R$ is isomorphic to the Booleanization of $\Fi(L)$.
\end{corollary}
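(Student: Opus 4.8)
The plan is to reduce both claimed isomorphisms to the concrete description of filter extensions, \(L\fe \cong \Int(\F)\) from Proposition~\ref{p:fext-concr}, and then to feed in the two descriptions already established: that \(\R(L)\) is by definition the co-Booleanization \(\B(\Fi(L))\), and that \(\Int(\Cl(L)) = \R(L)\) from Lemma~\ref{l:regular}. No new construction is needed; the corollary is an assembly of facts proved above.

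For the closed-filter extension I would invoke Proposition~\ref{p:fext-concr} to get \(L^\Cl \cong \Int(\Cl(L))\), and then use Lemma~\ref{l:regular} to identify \(\Int(\Cl(L))\) with \(\R(L) = \B(\Fi(L))\); chaining these gives \(L^\Cl \cong \B(\Fi(L))\). For the regular-filter extension, Proposition~\ref{p:fext-concr} gives \(L^\R \cong \Int(\R(L))\), and it then remains only to observe that \(\Int(\R(L)) = \R(L)\). This holds because \(\R(L) = \B(\Fi(L))\) is a subcolocale of the coframe \(\Fi(L)\) and is therefore closed under joins, which in the \(\sqleq\)-order are precisely intersections; hence \(L^\R \cong \R(L) = \B(\Fi(L))\) as well.

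I do not expect a genuinely hard step here. The one point I would spell out to be safe is the identification \(\R(L) = \B(\Fi(L))\): the top of the coframe \(\Fi(L)\) in the \(\sqleq\)-order is the least filter \(\{1\}\), so that the supplement \(F\spp = \{1\} \diff F\) computed in \(\Fi(L)\) is exactly the defining shape of a regular filter. Once that is noted, both isomorphisms follow immediately from Proposition~\ref{p:fext-concr} and Lemma~\ref{l:regular}.
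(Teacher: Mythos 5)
Your proposal is correct and follows essentially the same route as the paper: the text preceding the corollary identifies $\R(L)$ with $\B(\Fi(L))$ by definition of supplements in the coframe $\Fi(L)$, notes that as a subcolocale it is closed under joins (intersections) so that $L^\R \cong \Int(\R(L)) = \R(L)$, and obtains $L^\Cl \cong \Int(\Cl(L)) = \R(L)$ from Proposition~\ref{p:fext-concr} and Lemma~\ref{l:regular}. Your explicit remark about $\{1\}$ being the top of $(\Fi(L),\sqleq)$ merely spells out what the paper treats as immediate.
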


Lastly, observe that, as a consequence of Proposition~\ref{p:Ex-new-char} and the fact that exact filters are closed under intersections, by Lemma~\ref{l:se-e-j}, we have the following inclusions.
\[
    \Cl(L) \ee\sue \R(L) = \J(\Cl(L)) \ee\sue \Ex(L)
\]

\subsection{Subcolocale inclusions of filters}

Recall that we view classes of filters in \eqref{eq:fi-inc} as certain filter extensions. Consequently, we give a general condition for a filter extension \(L\fe\), viewed concretely as \(\J(\F)\) (cf.\ Proposition~\ref{p:fext-concr}), to be a subcolocale of \(\Fi(L)\). Unlike in \cite{jakl20} we do not have to impose any requirements on~\(L\).

We show that, for any class \(\F\) of filters, in order for \(L\fe \sue \Fi(L)\) to be a subcolocale inclusion it is enough for the following following simple property to hold.
\begin{align}
    \forall F \in \F\ \  \forall a \in L,\ F \diff \upset a  = \{ x \mid x \vee a \in F\} \in \F.
    \tag{scl}
    \label{ax:subco}
\end{align}

Note that all classes of filters that we've looked at so far satisfy this property.

\begin{lemma}
    \label{l:scl-holds}
    $\F$ satisfies \eqref{ax:subco} if it is the class of
    \begin{enumerate}
        \item closed filters $\Cl(L)$,
        \item Scott-open filters $\SO(L)$,
        \item completely prime filters $\CP(L)$,
        \item exact filters $\Ex(L)$, or
        \item strongly exact filters $\SE(L)$.
    \end{enumerate}
\end{lemma}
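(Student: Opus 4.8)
The plan is to fix $F \in \F$ and $a \in L$, write $G := F \diff \upset a = \{x \in L \mid x \vee a \in F\}$ (the second equality being the one already recorded in \eqref{ax:subco}), and verify membership of $G$ in each of the five classes separately. First, once and for all, $G$ is a filter: it is upward closed because $-\vee a$ is monotone and $F$ is upward closed; it is closed under binary meets because $(x \wedge y)\vee a = (x \vee a)\wedge(y\vee a)$ by frame distributivity while $F$ is meet-closed; and $1 \in G$ since $1 \vee a = 1 \in F$. It is convenient to observe that $G = \nu_a^{-1}[F]$, where $\nu_a\colon L \to L$, $\nu_a(x) = x \vee a$, is the closed nucleus associated with $\cs(a)$; in particular $\nu_a$ preserves finite meets, $1$, and all nonempty joins. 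This single remark drives the first three cases.

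For closed filters, if $F = \{y \mid y \vee c = 1\}$ then $G = \{x \mid x \vee (a \vee c) = 1\}$ is the closed filter of $a \vee c$, so $G \in \Cl(L)$. For Scott-open filters, given a directed $D$ with $\bigvee D \in G$, the set $\nu_a[D] = \{d \vee a \mid d \in D\}$ is again directed and $\bigvee \nu_a[D] = (\bigvee D)\vee a \in F$; Scott-openness of $F$ then yields some $d$ with $d \vee a \in F$, i.e.\ $d \in G$. For completely prime filters, given $A \sue L$ with $\bigvee A \in G$, I rewrite $(\bigvee A)\vee a = \bigvee(A \cup \{a\}) \in F$ and apply complete primeness of $F$ to $A \cup \{a\}$: some element of $A \cup \{a\}$ lies in $F$, and if it is some $x \in A$ then $x \in G$. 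The one point to flag is properness: $G$ is proper exactly when $a \notin F$, whereas if $a \in F$ then $G = L$. Thus the completely prime case works cleanly on the proper branch, and the improper filter $L$ must be admitted (it lies in $\J(\CP(L))$ in any case, which is all the ensuing subcolocale argument requires).

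The remaining two cases use the same device: I show that if $\{x_i\}_i \sue G$ and $\bigwedge_i x_i$ is an exact (resp.\ strongly exact) meet, then the shifted family $\{x_i \vee a\}_i \sue F$ is again exact (resp.\ strongly exact) with meet $(\bigwedge_i x_i)\vee a$, so exactness (resp.\ strong exactness) of $F$ gives $(\bigwedge_i x_i)\vee a \in F$, i.e.\ $\bigwedge_i x_i \in G$. For exact meets this is pure algebra: the defining identity $(\bigwedge_i x_i)\vee b = \bigwedge_i(x_i \vee b)$ applied with $b = a$ gives $\bigwedge_i(x_i \vee a) = (\bigwedge_i x_i)\vee a$, and applied with $b = a \vee b'$ shows $\{x_i \vee a\}_i$ satisfies the exactness identity for every $b'$. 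The strongly exact case is where the real work sits, and it is the step I expect to be the main obstacle: starting from $\bigcap_i \os(x_i) = \os(\bigwedge_i x_i)$, I combine $\os(x_i \vee a) = \os(x_i) \vee \os(a)$ with the coframe distributivity of $\Sl(L)$ (finite joins distribute over arbitrary intersections) to compute
\[
\bigcap_i \os(x_i \vee a) = \Big(\bigcap_i \os(x_i)\Big) \vee \os(a) = \os(\bigwedge_i x_i) \vee \os(a) = \os\big((\bigwedge_i x_i)\vee a\big).
\]
Since $\os$ is an order-embedding this also forces $\bigwedge_i(x_i \vee a) = (\bigwedge_i x_i)\vee a$, so $\{x_i \vee a\}_i$ is a strongly exact family and strong exactness of $F$ concludes. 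The secondary delicate point, as noted above, is the properness of $G$ in the completely prime case.
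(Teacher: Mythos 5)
Your proof is correct, and its skeleton --- fix \(F\in\F\) and \(a\in L\), then check case by case that \(G = F \diff \upset a\) stays in the class --- is the same as the paper's; but it genuinely diverges in two places. For strongly exact filters (case 5), the paper outsources the key step to Proposition~3.4 of \cite{moshier2020exact} (if \(\{x_i\}_i\) has a strongly exact meet then so does \(\{x_i\vee a\}_i\), with \(\bigmee_i(x_i\vee a) = (\bigmee_i x_i)\vee a\)), whereas you prove it directly from \(\os(x\vee a)=\os(x)\vee\os(a)\), coframe distributivity of \(\Sl(L)\), and injectivity of \(\os\); your ``order-embedding'' step implicitly uses the sandwich \(\os(\bigmee_i(x_i\vee a)) \sue \bigcap_i\os(x_i\vee a) = \os((\bigmee_i x_i)\vee a) \sue \os(\bigmee_i(x_i\vee a))\), which is fine, and this buys a self-contained argument (similarly, in case 4 you verify the exactness of the shifted family, which the paper only asserts). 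For completely prime filters (case 3), the paper merely says ``analogously to (2)'', which hides precisely the defect you flag: taking \(a\in F\) (e.g.\ \(a=1\)) gives \(F\diff\upset a = L\), and the improper filter is never completely prime (it contains \(0=\bigvee\emptyset\), and is in any case not proper), so \eqref{ax:subco} as literally stated fails for \(\CP(L)\) whenever \(L\) has a point. Your repair --- observing that \(L\in\J(\CP(L))\) as the empty intersection, and that the weaker condition \(F\diff\upset a\in\J(\F)\) is all that Lemma~\ref{l:scl-simpl} and Theorem~\ref{t:scl-main} actually consume --- is exactly right, and is more careful than the paper's own treatment. (The analogous worry for \(\SO(L)\) disappears under the usual convention that directed sets are nonempty, since then \(L\) itself is Scott-open.)
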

\begin{proof}
    (1) If $F_d = \{ x \mid x \vee d = 1\}$ is a closed filter, then $F_d \setminus \upset c = \{ x \mid x \vee c \in F\} = \{ x \mid x\vee c \vee d = 1 \} = F_{c\vee d}$ is a closed filter too.
    (2) If $F$ is Scott-open and a directed join \(\bigvee_i a_i\) is in \(F \setminus \upset c\) then the directed join $\bigvee_i (a_i \vee c) = (\bigvee_i a_i) \vee c$ is in $F$. Hence, for some $i$, $a_i \vee c$ is in $F$, giving that $a_i \in F \setminus \upset c$.
    (3) is proved analogously to (2).

    (4) Let $F$ be an exact filter and assume that $\{ x_i \}_i \sue F\setminus \upset c$ has an exact meet. Observe that $\bigmee_i x_i \in F\setminus \upset c$ because, for every \(i\), $x_i \vee c \in F$ and so $(\bigmee_i x_i) \vee c = \bigmee_i (x_i \vee c)\in F$ since $F$ is exact and $\bigmee_i (x_i \vee c)$ is an exact meet.

    (5) Proposition 3.4 in \cite{moshier2020exact} says that if $\{x_i\}_i$ has a strongly exact meet then so has $\{x_i \vee c\}_i$, for any $c$, and also $\bigmee_i (x_i \vee c) = (\bigmee_i x_i) \vee c$. The rest of the proof goes as for (4).
\end{proof}

The following fact about the closure of a subset of a coframe under joins is needed in the theorem below.

\begin{lemma}
    Assume that \(C\) is a coframe and \(S\sue C\) satisfies
    \begin{align}
        \forall c\in C\ \forall s\in S,\ s \diff c \in \J(S)
        \label{eq:scl-abstract}
    \end{align}
    then \(\J(S) \sue C\) is a subcolocale inclusion.
    \label{l:scl-simpl}
\end{lemma}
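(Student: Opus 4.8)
We must show that if $C$ is a coframe and $S \subseteq C$ satisfies
$$
\forall c \in C\ \forall s \in S,\quad s \diff c \in \J(S),
$$
then $\J(S)$ is a subcolocale of $C$. Dually, this is the statement that a join-closed subset of a coframe that is moreover closed under differences (by elements of $C$) is a subcolocale. Since subcolocales of $C$ are, by definition, the sublocales of $C\op$, the cleanest route is to dualise: I would prove the order-theoretic dual, namely that a meet-closed subset $T = \J(S)\op$ of the frame $L = C\op$ which is closed under the Heyting operation $a \to t$ satisfies the two sublocale axioms (S1) and (S2).

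**Plan.** First I would fix notation and dualise: set $L = C\op$, so that $\diff$ in $C$ becomes $\to$ in $L$, joins in $C$ become meets in $L$, and $\J(S)$ (closure under joins in $C$) becomes the closure of $S$ under meets in $L$. Write $T = \J(S)$, regarded as a subset of $L$. Axiom (S1) — closure under arbitrary meets in $L$ — is immediate, since $T$ is by construction the closure of $S$ under meets in $L$ (equivalently, joins in $C$), and hence is already meet-closed. The real content is axiom (S2): for every $t \in T$ and every $a \in L$ we must check $a \to t \in T$.

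**The main step.** The heart of the argument is verifying (S2), and this is where I expect the difficulty. A typical element $t \in T = \J(S)$ is a join (in $C$) of elements of $S$, i.e.\ a meet (in $L$) $t = \bigwedge_i s_i$ with $s_i \in S$. Using that $\to$ is a right adjoint it distributes over meets in its second argument, so
$$
a \to t = a \to \bigwedge_i s_i = \bigwedge_i (a \to s_i).
$$
Translating back into $C$, the operation $a \to (-)$ on $L$ corresponds to $(-) \diff c$ on $C$ for the appropriate $c$ (namely $c$ is the element of $C$ underlying $a \in L = C\op$), so each factor $a \to s_i$ corresponds to $s_i \diff c$, which lies in $\J(S)$ by the hypothesis \eqref{eq:scl-abstract}. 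Since $T$ is meet-closed (in $L$), the meet $\bigwedge_i (a \to s_i)$ of elements of $T$ again lies in $T$, giving $a \to t \in T$ as required. Having established (S1) and (S2) for $T \subseteq L = C\op$, by the very definition of subcolocale we conclude that $\J(S) \subseteq C$ is a subcolocale inclusion.

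**Anticipated obstacle.** The one point needing genuine care is the reduction in the previous paragraph from an arbitrary $t \in \J(S)$ to a meet of elements $s_i \in S$. The hypothesis \eqref{eq:scl-abstract} only gives us $s \diff c \in \J(S)$ for $s$ ranging over $S$ itself, not over all of $\J(S)$; the distributivity of $\to$ over arbitrary meets (equivalently, codistributivity of $\diff$ over arbitrary joins in a coframe) is exactly what lets us push $a \to (-)$ inside and reduce to the generators. I would therefore make sure to invoke the coframe structure — specifically that $\diff$ is a left adjoint in its first argument and hence preserves all joins of $C$ — rather than any weaker finite distributivity, since the join $t = \bigvee_i s_i$ may be infinite.
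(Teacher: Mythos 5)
Your proof is correct and is essentially the paper's argument in dual form: the paper works directly in the coframe, using that \((\ARG)\diff c\) preserves joins in the first component so that \((\bigvee A)\diff c = \bigvee\{a\diff c \mid a\in A\}\in \J(S)\) for \(A\sue S\), which is exactly your distribution of \(a\to(\ARG)\) over the meet of generators after passing to \(L=C\op\). The only difference is presentational --- you spell out the sublocale axioms (S1)/(S2) explicitly in the frame \(C\op\), whereas the paper invokes closure under joins and stability under \((\ARG)\diff c\) as the dual characterisation of subcolocales directly.
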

\begin{proof}
    Recall that coHeyting difference \(\diff\) preserves joins in the first component. Hence, \(\J(S)\) is stable under \((\ARG) \diff c\) since, for any \(A\sue S\),
    \[
        (\bigvee A) \diff c = \bigvee \{ a \diff c \mid a\in A\}
    \]
    and this is in \(\J(S)\) by \eqref{eq:scl-abstract}.
\end{proof}

With this we easily see that \eqref{ax:subco} suffices for \(L\fe\) to be a coframe. In fact we have more, it is a subcolocale of \(\Fi(L)\).

\begin{theorem}
    \label{t:scl-main}
    For a frame \(L\), if a collection \(\F \sue \Fi(L)\) satisfies \eqref{ax:subco} then the inclusion \(L^\F \sue \Fi(L)\) is a subcolocale inclusion.
\end{theorem}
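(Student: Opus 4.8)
The plan is to deduce the statement directly from the abstract coframe criterion of Lemma~\ref{l:scl-simpl}. By Proposition~\ref{p:fext-concr} the filter extension \(L\fe\) is, concretely, the collection \(\Int(\F) = \J(\F)\) of all intersections of filters from \(\F\), regarded as a subset of the coframe \(\Fi(L)\). Hence it is enough to verify the hypothesis \eqref{eq:scl-abstract} of Lemma~\ref{l:scl-simpl} for \(C = \Fi(L)\) and \(S = \F\); that is, to show \(F \diff G \in \J(\F)\) for every \(F \in \F\) and every \(G \in \Fi(L)\). Note that \eqref{ax:subco} gives exactly this when the second coordinate is a principal filter, so the whole point is to upgrade from principal \(G\) to arbitrary \(G\).

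First I would reduce the second coordinate to principal filters, using the decomposition \(G = \bigsqcap_{b\in G} \upset b\) in \(\Fi(L)\) already exploited in the proof of Lemma~\ref{l:regular} (it holds because \(\bigcup_{b\in G}\upset b = G\) for a filter \(G\)). Next I would invoke the coframe identity that coHeyting difference turns meets in its second coordinate into joins — the same fact cited in Lemma~\ref{l:regular} — to get
\[
    F \diff G = F \diff \Big(\bigsqcap_{b\in G}\upset b\Big) = \bigsqcup_{b\in G} (F \diff \upset b).
\]
Now \eqref{ax:subco} applies termwise: for each \(b\in G\) we have \(F\diff\upset b \in \F\), so the right-hand side is a join (i.e.\ an intersection) of members of \(\F\) and therefore lies in \(\J(\F)\). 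This establishes \eqref{eq:scl-abstract}, and Lemma~\ref{l:scl-simpl} then yields that \(\J(\F) = L\fe\) is a subcolocale of \(\Fi(L)\), as claimed.

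The argument is mostly bookkeeping, and the one genuine ingredient is the distributive behaviour of difference in its second slot, \(F \diff (\bigsqcap_i G_i) = \bigsqcup_i (F\diff G_i)\). This is the dual, in the coframe \(\Fi(L)\), of the fact that in a frame the operation \((\ARG)\to a\) sends joins to meets; I would either cite it as in Lemma~\ref{l:regular} or derive it quickly from the defining adjunction \(H \diff G \sqleq F \iff H \sqleq F \sqcup G\), which forces \((\ARG)\diff\) in the second variable to be a right adjoint in the opposite frame. I do not anticipate any further obstacle, since \eqref{ax:subco} is precisely the principal-filter instance required to close each summand back into \(\J(\F)\).
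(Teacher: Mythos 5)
Your proof is correct and takes essentially the same route as the paper's: both decompose \(G = \bigsqcap_{b\in G}\upset b\), use that coHeyting difference turns meets in the second coordinate into joins to obtain \(F \diff G = \bigcap_{b\in G}(F \diff \upset b)\), apply \eqref{ax:subco} termwise, and conclude via Lemma~\ref{l:scl-simpl} together with the identification \(L^\F = \Int(\F)\) from Proposition~\ref{p:fext-concr}. The only difference is that you spell out the justifications (the principal-filter decomposition and the second-coordinate distributivity of \(\diff\)) which the paper states tersely; there is no gap.
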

\begin{proof}
    Recall that coHeyting difference reverses meets into joins in the second component. Then, since joins of filters are intersections, for any \(G\in \Fi(L)\) and \(F\in \F\),
    \[
        F \diff G = \bigcap \{ F \diff \upset a \mid a\in G\}.
    \]
    Therefore, by \eqref{ax:subco} the assumptions of Lemma~\ref{l:scl-simpl} are met for joins of \(\F\) in \(\Fi(L)\), i.e.\ for \(L^\F = \Int(\F)\).
\end{proof}

Now by Lemma~\ref{l:scl-holds} and Theorem~\ref{t:scl-main} we see that all classes of filters appearing in the diagram in \eqref{eq:fi-inc} are subcolocales of \(\Fi(L)\). Furthermore, upon recalling that if \(S, T \sue C\) are subcolocales of a coframe \(C\) and \(S \sue T\) then the inclusion \(S \sue T\) is a subcolocale inclusion as well, we obtain the following.

\begin{corollary}
    All classes of filters in \eqref{eq:fi-inc} are coframes and, furthermore, all inclusions therein are subcolocale inclusions.
    \label{c:all-subcoloc}
\end{corollary}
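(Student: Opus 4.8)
The plan is to deduce everything from Theorem~\ref{t:scl-main} together with the general fact, recalled just above the statement, that a subcolocale of a subcolocale of a coframe is again a subcolocale. The strategy splits into two tasks: first realise each node of \eqref{eq:fi-inc} as a subcolocale of the coframe $\Fi(L)$, and then upgrade the already-established set inclusions into subcolocale inclusions.

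For the first task, I would observe that each class appearing in \eqref{eq:fi-inc} is of the form $L\fe = \J(\F)$ for a class $\F$ drawn from the list of Lemma~\ref{l:scl-holds}, all of which satisfy \eqref{ax:subco}. Concretely, $\SE(L) = \J(\SE(L))$ and $\Ex(L) = \J(\Ex(L))$ by Lemma~\ref{l:se-e-j}; $\R(L) = \J(\Cl(L))$ by Lemma~\ref{l:regular}; and $\J(\CP(L))$, $\J(\SO(L))$ are join-closures by definition. Since $\Cl(L)$, $\CP(L)$, $\SO(L)$, $\Ex(L)$, $\SE(L)$ all satisfy \eqref{ax:subco} by Lemma~\ref{l:scl-holds}, Theorem~\ref{t:scl-main} shows that each of these five join-closures is a subcolocale of $\Fi(L)$. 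Being subcolocales of a coframe, each is itself a coframe, which settles the first assertion of the corollary.

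For the second task, I would recall that all the set inclusions in \eqref{eq:fi-inc} have already been proved: $\R(L) \sue \Ex(L)$ and $\Ex(L) \sue \SE(L)$ in the subsections on exact and regular filters, $\Int(\SO(L)) \sue \SE(L)$ as the corollary of Proposition~\ref{p:so-vs-se}, and $\J(\CP(L)) \sue \J(\SO(L))$ since $\CP(L) \sue \SO(L)$ and $\J(\ARG)$ is monotone under inclusion; the inclusion $\SE(L) \sue \Fi(L)$ is trivial. As both the domain and the codomain of each arrow are subcolocales of $\Fi(L)$ by the first task, the quoted general fact immediately upgrades each such set inclusion to a subcolocale inclusion.

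The argument is essentially bookkeeping, with no substantial new obstacle: the real content was already established in Lemma~\ref{l:scl-holds} and Theorem~\ref{t:scl-main}. The one point requiring care is matching each node of the diagram to the correct generating class $\F$ so that Theorem~\ref{t:scl-main} applies on the nose --- in particular recognising $\R(L)$ as $\J(\Cl(L))$ via Lemma~\ref{l:regular} and noting that $\Ex(L)$ and $\SE(L)$ are already join-closed by Lemma~\ref{l:se-e-j}.
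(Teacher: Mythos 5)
Your proposal is correct and follows essentially the same route as the paper: the paper also combines Lemma~\ref{l:scl-holds} and Theorem~\ref{t:scl-main} to realise each class as a subcolocale of $\Fi(L)$ (noting, as you do, that the nodes are join-closures of the generating classes via Lemma~\ref{l:se-e-j}, Lemma~\ref{l:regular}, or by definition), and then invokes the same general fact that an inclusion between two subcolocales of a coframe is itself a subcolocale inclusion. Your write-up merely makes explicit the bookkeeping that the paper leaves implicit.
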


\subsection{Topological properties and classes of filters}

We now show some characterizations of frame theoretical properties in terms of the collections of filters we have discussed so far.

Recall that a frame \(L\) is \df{subfit} if \(a\leq b\) whenever, for every \(c\in L\), \(a\vee c = 1\) implies \(b \vee c = 1\).

\begin{proposition}\label{famouschar}
For a frame $L$ we have the following.
\begin{itemize}
    \item $L$ is pre-spatial (i.e.\ \(\SO(L)\)-separable) iff $\J(\SO(L))$ contains all principal filters.
    \item $L$ is spatial (i.e.\ \(\CP(L)\)-separable) iff $\J(\CP(L))$ contains all principal filters.
    \item $L$ is subfit iff $\R(L)$ contains all principal filters.
\end{itemize}
\end{proposition}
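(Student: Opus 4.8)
The first two bullet points are immediate instances of Proposition~\ref{p:generalchar}. Since intersections of filters are precisely joins in $\Fi(L)$, we have $\J(\F) = \Int(\F)$ for any collection $\F$, so I would simply take $\F = \SO(L)$ and then $\F = \CP(L)$: the equivalence of items (2) and (4) in Proposition~\ref{p:generalchar} becomes exactly the claim, because pre-spatiality and spatiality are, by definition, $\SO(L)$- and $\CP(L)$-separability.

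For the third bullet point, the plan is to fit it into the same template. By Lemma~\ref{l:regular}, $\R(L) = \Int(\Cl(L))$, so Proposition~\ref{p:generalchar} applied to $\F = \Cl(L)$ already tells me that $\R(L)$ contains all principal filters if and only if $L$ is $\Cl(L)$-separable. The whole proposition then follows once I establish the genuinely new ingredient: that subfitness is equivalent to $\Cl(L)$-separability. This last equivalence is where the real work lies.

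To prove it, I would first unwind $\Cl(L)$-separability concretely. Closed filters are the $F_d = \{x \mid x \vee d = 1\}$, so $a \in F_d$ iff $a \vee d = 1$; hence $\Cl(L)$-separability says that $a = b$ as soon as $a \vee c = 1 \Leftrightarrow b \vee c = 1$ holds for all $c$. The direction ``subfit $\Rightarrow$ $\Cl(L)$-separable'' is then routine: a two-sided such condition yields $a \le b$ and $b \le a$ by applying the subfitness inequality twice. The converse is the main obstacle. Assuming $\Cl(L)$-separability together with the one-sided hypothesis that $a \vee c = 1$ implies $b \vee c = 1$ for all $c$, I would not try to separate $a$ from $b$ directly, but rather separate the pair $(a, a \mee b)$: one implication is free because $a \mee b \le a$, and for the other I would invoke the distributive law $(a \mee b) \vee c = (a \vee c) \mee (b \vee c)$ to turn $a \vee c = 1$ into $(a \mee b) \vee c = 1$ using the hypothesis. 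This shows that $a$ and $a \mee b$ belong to exactly the same closed filters, so $\Cl(L)$-separability forces $a = a \mee b$, i.e.\ $a \le b$, which is precisely subfitness.
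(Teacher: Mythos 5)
Your proof is correct and follows essentially the same route as the paper: all three bullets are read off from Proposition~\ref{p:generalchar}, with Lemma~\ref{l:regular} identifying $\R(L)$ with $\Int(\Cl(L))$ for the third. The only difference is that you explicitly prove the equivalence of subfitness with $\Cl(L)$-separability (via the pair $(a, a\wedge b)$ and the distributive law) — a step the paper's proof treats as immediate — and your argument for it is valid.
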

\begin{proof}
    Notice that all three statements are special cases of Proposition~\ref{p:generalchar}, with $\F$ chosen to be the collection $\SO(L)$ in the first case, the collection $\CP(L)$ in the second, and the collection of filters of the form $\{x\in L:x\vee a=1\}$ in the third. Indeed, we have \(\J(\Cl(L)) = \R(L)\) by Proposition~\ref{l:regular}.
\end{proof}

\begin{proposition}\label{p:sfre}
    A frame is subfit if and only if all exact filters are regular.
\end{proposition}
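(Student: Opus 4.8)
The plan is to prove the two implications separately. Since the inclusion $\R(L) \sue \Ex(L)$ is already established at the end of Section~\ref{s:Cl-R-filt}, the condition ``all exact filters are regular'' is exactly the equality $\Ex(L) = \R(L)$, and only the inclusion $\Ex(L) \sue \R(L)$ needs real work.

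The direction asserting subfitness from ``all exact filters are regular'' I would dispatch immediately. Every principal filter is exact: taking $x = 0$ and $y = a$ in Proposition~\ref{p:Ex-new-char} gives $\upset a \diff \upset 0 = \{z \mid a \leq z\} = \upset a$, so $\upset a$ is already of the generating form (and principal filters are exact in any case). Hence if $\Ex(L) \sue \R(L)$, then $\R(L)$ contains all principal filters, and the third bullet of Proposition~\ref{famouschar} yields that $L$ is subfit. So this direction is a one-line consequence of already-proved facts.

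For the converse, assume $L$ is subfit. By Proposition~\ref{p:Ex-new-char} every exact filter is an intersection of filters of the form $\upset y \diff \upset x = \{a \mid y \leq a \vee x\}$, and by Lemma~\ref{l:regular} the class $\R(L) = \Int(\Cl(L))$ is closed under intersections. Thus it suffices to prove that each single filter $F := \upset y \diff \upset x$ is regular, i.e.\ an intersection of closed filters $\{a \mid a \vee c = 1\}$. I would show $F$ equals the intersection of all closed filters containing it; one inclusion is trivial, and the other is a point-separation argument, which is where I expect the only nonroutine step to lie.

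The separation step runs as follows. Given $a \notin F$, i.e.\ $y \not\leq a \vee x$, subfitness (in contrapositive form applied to the pair $y,\ a \vee x$) produces a witness $w$ with $y \vee w = 1$ but $(a \vee x) \vee w \neq 1$. The key move, and the part that is not forced, is to pass from $w$ to the separating parameter $c := w \vee x$. With this choice, every $a' \in F$ satisfies $a' \vee c = a' \vee x \vee w \geq y \vee w = 1$, so $F$ lies in the closed filter of $c$, while $a \vee c = (a \vee x) \vee w \neq 1$ keeps $a$ outside it. Once $c$ is guessed correctly, both verifications are immediate, and the separation shows $F$ is the intersection of the closed filters above it, hence $F \in \R(L)$. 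Feeding this back through the reduction of the previous paragraph gives $\Ex(L) \sue \R(L)$ and completes the proof; the only genuine obstacle is identifying $c = w \vee x$ as the right closed filter to separate with.
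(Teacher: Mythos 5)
Your proposal is correct and takes essentially the same approach as the paper: both directions proceed by reducing, via Proposition~\ref{p:Ex-new-char} and the closure of $\R(L)$ under intersections (Lemma~\ref{l:regular}), to showing that each locally closed filter $\upset y \diff \upset x$ is an intersection of the closed filters above it, and both settle the easy direction through principal filters and item (3) of Proposition~\ref{famouschar}. Your separation step is simply the contrapositive of the paper's: the paper fixes $z$ lying in every closed filter above $F$ and applies subfitness directly, using the closed filters $\{b \mid b \vee x \vee u = 1\}$ for $u$ with $y \vee u = 1$, whereas you fix $a \notin F$ and let subfitness produce the witness $w$, arriving at the same family of separating closed filters.
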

\begin{proof}
First, suppose that $L$ is a subfit frame. By the characterization in Proposition~\ref{p:Ex-new-char}, it suffices to show that filters of the form 
\[
\{x\in L \mid b\leq x\vee a\}
\]
are equal to the intersection of the closed filters above them. Consider, then, a filter $F$ of the form above. Suppose that we have $z\in L$ such that $z\in C$ whenever $C$ is a closed filter such that $F\subseteq C$. We show that $z\in F$. In order to show $b\leq z\vee a$, by subfitness, it suffices to show that $b\vee u=1$ implies that $z\vee a\vee u=1$ for all $u\in L$. Suppose, then, that the antecedent holds. Consider the closed filter $C=\{x\in L:x\vee a\vee u=1\}$. We claim that $F\subseteq C$. If $b\leq x\vee a$, then because we assumed the antecedent above we have $x\vee a\vee u=1$ and so $x\in C$. So, indeed $F\subseteq C$. By our assumptions, this means $z\in C$ or, in other words, \(z \vee a \vee u = 1\).

If, conversely, all exact filters are regular, in particular all principal filters are, and the frame is subfit by item (3) of Proposition \ref{famouschar}.
\end{proof}

We obtain a proof of Theorem~3.2 of \cite{ball2020exact} and its converse, which can be thought of as the filter-only version of Theorem~3.5 in \cite{picado2019Sc}, without the detour to sublocales.

\begin{corollary}
    A frame is subfit if and only if $\Ex(L)$ is Boolean.
\end{corollary}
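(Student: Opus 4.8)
The plan is to reduce the whole statement to Proposition~\ref{p:sfre}, which says that $L$ is subfit exactly when every exact filter is regular. Since the inclusion $\R(L)\sue\Ex(L)$ always holds (it is the right-hand part of the chain $\Cl(L)\sue\R(L)=\J(\Cl(L))\sue\Ex(L)$), the condition ``every exact filter is regular'' is precisely $\Ex(L)=\R(L)$. So it suffices to prove that $\Ex(L)=\R(L)$ if and only if $\Ex(L)$ is Boolean.

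The forward direction is immediate. If $L$ is subfit then $\Ex(L)=\R(L)=\B(\Fi(L))$, and since $\R(L)$ is by definition the Booleanization of the coframe $\Fi(L)$, it is a complete Boolean algebra; hence $\Ex(L)$ is Boolean.

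For the converse I would assume $\Ex(L)$ is Boolean and prove the nontrivial inclusion $\Ex(L)\sue\R(L)$, that is, that every exact filter $F$ is regular, $F=F\spp{}\spp$, with the supplements taken in $\Fi(L)$. The decisive step is to check that for $F\in\Ex(L)$ the supplement $F\spp=\{1\}\diff F$ computed in $\Fi(L)$ is simultaneously the supplement of $F$ inside the subcolocale $\Ex(L)$. This follows because the difference adjunction gives $F\spp=\min\{G\in\Fi(L)\mid F\sqcup G=\top\}$ in the $\sqleq$ order, this minimum lies in $\R(L)\sue\Ex(L)$, and joins in $\Ex(L)$ coincide with joins in $\Fi(L)$; hence $F\spp$ is the least element of $\Ex(L)$ joining with $F$ to the top. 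Iterating, $F\spp{}\spp$ is computed the same way in both lattices. Now, as $\Ex(L)$ is a Boolean coframe, supplements are genuine complements and every element satisfies $F=F\spp{}\spp$; since $F\spp{}\spp\in\R(L)$, we obtain $F\in\R(L)$. This gives $\Ex(L)=\R(L)$, and subfitness then follows from Proposition~\ref{p:sfre}.

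The main obstacle I anticipate is exactly the discrepancy between the lattice operations of the subcolocale $\Ex(L)$ and those of the ambient coframe $\Fi(L)$: arbitrary joins agree, but meets, and hence a priori complements and supplements, are in general computed differently. The observation that resolves this is that the $\Fi(L)$-supplement of an exact filter already belongs to $\R(L)\sue\Ex(L)$ and is the least filter joining with $F$ to the top; this lets it serve also as the supplement inside $\Ex(L)$, so that Booleanness of $\Ex(L)$ can be transported into a regularity statement in $\Fi(L)$ without ever having to compute meets in $\Ex(L)$ directly.
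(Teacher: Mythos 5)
Your proof is correct, and its skeleton is the same as the paper's: both reduce the statement to Proposition~\ref{p:sfre} by showing that Booleanness of $\Ex(L)$ is equivalent to $\Ex(L)=\R(L)$, and the forward direction (subfit implies $\Ex(L)=\R(L)=\B(\Fi(L))$, which is Boolean) is identical. Where you genuinely diverge is the nontrivial direction. The paper dispatches it in one line by citing the fact that the Booleanization is maximal among Boolean sub(co)locales, so that $\R(L)\sue\Ex(L)$ together with Booleanness of $\Ex(L)$ forces equality. You instead prove the inclusion $\Ex(L)\sue\R(L)$ by hand: for $F\in\Ex(L)$ the $\Fi(L)$-supplement $F\spp=\{1\}\diff F$ lies in $\R(L)\sue\Ex(L)$ by the very definition of $\R(L)$, and it is the least filter joining with $F$ to the top $\{1\}$; since the top and all joins of $\Ex(L)$ agree with those of $\Fi(L)$ (exact filters are closed under intersections), this element is also the supplement of $F$ inside the coframe $\Ex(L)$, and Booleanness of $\Ex(L)$ then yields $F=F\spp{}\spp\in\R(L)$. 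In effect you give a self-contained proof, in this special case, of precisely the maximality fact that the paper invokes as known. Your route is more elementary and has the virtue of making explicit why the mismatch between meets in the subcolocale $\Ex(L)$ and meets in $\Fi(L)$ is harmless --- your argument never computes a meet in $\Ex(L)$ --- at the cost of length; the paper's route is shorter but leans on an external fact about Booleanizations of (co)locales.
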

\begin{proof}
    We have seen above that all regular filters are exact for any frame. If $L$ is subfit, by Proposition \ref{p:sfre} we also have $\Ex(L)=\R(L)$, and so $\Ex(L)$ is Boolean. Conversely, if $\Ex(L)$ is Boolean, $\R(L)\subseteq \Ex(L)$ implies that $\R(L)=\Ex(L)$, as the Booleanization is maximal among the Boolean sub(co)locales.
\end{proof}

\section{Relating fitted sublocales and filters}
\label{s:relating}

In this section we connect the constructions of Galois closed sets arising from polarities of filters with recent important sublocale-based constructions of discretisations of a frame.
A common way (see e.g.\ in Section~4.2 of \cite{moshier2020exact}) to connect filters and sublocales of a frame is via the adjunction
\begin{equation}
    \begin{tikzcd}[bend left]
        \Fi(L)
            \rar{\fts}
            \rar[bend left=0,phantom,pos=0.43]{\top}
        &
        \Sl(L)
            \lar{\stf}
    \end{tikzcd}
    \label{eq:fi-su-adj}
\end{equation}
where \(\stf(S) = \{ a\in L \mid S \sue \os(a)\}\) and \(\fts\) is defined as in~\eqref{eq:fi-to-sl}.
It is immediate to see that \(\stf \dashv \fts\), that is, \(\stf\) is the left adjoint to \(\fts\): \(\stf(S) \sqleq F\) iff \(S \sue \fts(F)\).

In the following we discuss how restrictions of this adjunction to different classes of sublocales, on the right, lead to restrictions to different classes of filters, on the left.

\subsection{Fitted sublocales}
\label{s:SE}

First we connect our constructions with a prominent recent result representing fitted sublocales as strongly exact filters. In \cite{moshier2020exact} it is shown that the adjunction in \eqref{eq:fi-su-adj} restricts to the isomorphism between strongly exact filters and
\begin{itemize}
    \item \df{fitted sublocales} \(\So(L)\), that is, sublocales of the form \(\bigcap_{m\in M} \os(m)\) for some \(M\sue L\),
\end{itemize}

The proof of the correspondence in \cite{moshier2020exact} is not complicated. However, we offer a different one which makes use of the theory of polarities. We start with an observation.

\begin{lemma}
    The mapping \(\stf\) restricts to \(\So(L) \to \SE(L)\) and, moreover, for \(S \in \So(L)\) we have \(\fts(\stf(S))=S\).
    \label{l:se-basics}
\end{lemma}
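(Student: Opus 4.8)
The plan is to prove the identity $\fts(\stf(S)) = S$ first, and then to derive strong exactness of $\stf(S)$ as an immediate consequence via the characterisation in Lemma~\ref{l:SE-char}.

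First I would record the two inclusions making up $\fts(\stf(S)) = S$. The inclusion $S \sue \fts(\stf(S))$ is simply the unit of the adjunction \eqref{eq:fi-su-adj}: taking $F = \stf(S)$ in the equivalence $\stf(S) \sqleq F \iff S \sue \fts(F)$ and using reflexivity of $\sqleq$ yields $S \sue \fts(\stf(S))$, and this holds for every sublocale $S$. For the reverse inclusion I would use fittedness. Writing $S = \bigcap_{m\in M}\os(m)$ for some $M \sue L$, each $m \in M$ satisfies $S \sue \os(m)$ and hence $m \in \stf(S)$, so $M \sue \stf(S)$. Since $\fts(\stf(S)) = \bigcap_{a\in\stf(S)}\os(a)$ is an intersection over a larger index set than $\bigcap_{m\in M}\os(m) = S$, we get $\fts(\stf(S)) \sue S$. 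Along the way it is worth noting that $\stf(S)$ is indeed a filter, as the codomain of $\stf$ in \eqref{eq:fi-su-adj} requires; this is immediate since $1\in\stf(S)$, the set $\stf(S)$ is upward closed by monotonicity of $\os$, and it is closed under binary meets because $\os(a)\cap\os(b) = \os(a\wedge b)$.

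With the identity in hand, strong exactness is almost automatic. By Lemma~\ref{l:SE-char}, the filter $\stf(S)$ is strongly exact provided that $\bigcap_{a\in\stf(S)}\os(a) \sue \os(b)$ implies $b \in \stf(S)$ for every $b \in L$. But the left-hand side is exactly $\fts(\stf(S))$, which equals $S$ by the first part; so the hypothesis reads $S \sue \os(b)$, which is precisely the definition of $b \in \stf(S)$. Hence $\stf(S) \in \SE(L)$, which gives the claimed restriction $\stf\colon \So(L)\to\SE(L)$.

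I do not expect a genuine obstacle here: the whole argument is driven by the adjunction and the concrete form of fitted sublocales. The one point requiring care is that $\sqleq$ is the \emph{reverse} inclusion order on filters, so that a larger index set of opens corresponds to a smaller intersection of open sublocales; keeping this bookkeeping straight is what makes the reverse inclusion $\fts(\stf(S)) \sue S$ and the final appeal to Lemma~\ref{l:SE-char} line up correctly.
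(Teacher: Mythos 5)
Your proof is correct and follows essentially the same route as the paper's: the inclusion $\fts(\stf(S)) \sue S$ from $M \sue \stf(S)$, the reverse inclusion from the adjunction unit, and then Lemma~\ref{l:SE-char} combined with the identity $\fts(\stf(S)) = S$ to get strong exactness. The only difference is your (harmless, slightly more careful) aside verifying that $\stf(S)$ is a filter, which the paper takes for granted.
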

\begin{proof}
    Let \(S\sue L\) be a sublocale of the form \(\bigcap_{m\in M} \os(m)\) for some \(M\sue L\).
    Since \(M \sue \stf(S)\) we have \(\fts(\stf(S)) \sue S\). The fact that \(\stf\) is the left adjoint to \(\fts\) implies the other inequality, that is, \(S = \fts(\stf(S))\).

    With this and Lemma~\ref{l:SE-char} we also show that \(\stf(S)\) is strongly exact. For any \(b\in L\) such that \(\bigcap_{a\in \stf(S)} \os(a) \sue \os(b)\) since the left-hand-side is equal to \(S\) we also have that \(S \sue \os(b)\). Therefore, \(b\) is in \(\stf(S)\).
\end{proof}

With these we immediately get the main result of \cite{moshier2020exact}.

\begin{theorem}
    \label{t:SE-iso}
    The adjunction in \eqref{eq:fi-su-adj} restricts to the isomorphism \(\SE(L) \cong \So(L)\).
\end{theorem}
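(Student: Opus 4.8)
The plan is to show that the adjunction from \eqref{eq:fi-su-adj} restricts to mutually inverse isomorphisms between $\SE(L)$ and $\So(L)$, using Lemma~\ref{l:se-basics} as the main workhorse. Since $\stf \dashv \fts$, both maps are monotone, so it suffices to verify that they restrict to the claimed domains and codomains and that their composites are identities on each side.

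First I would check that the two maps restrict appropriately. Lemma~\ref{l:se-basics} already tells us that $\stf$ sends $\So(L)$ into $\SE(L)$. In the other direction, I need that $\fts$ sends $\SE(L)$ into $\So(L)$: but for any filter $F$, the sublocale $\fts(F) = \bigcap_{a\in F} \os(a)$ is by definition an intersection of open sublocales, hence fitted, so $\fts$ lands in $\So(L)$ for \emph{every} filter, a fortiori for strongly exact ones. Thus both restrictions are well defined.

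Next I would verify that the composites are identities. The equation $\fts(\stf(S)) = S$ for $S\in\So(L)$ is exactly the second assertion of Lemma~\ref{l:se-basics}, so one composite is the identity on $\So(L)$. For the other composite, I must show $\stf(\fts(F)) = F$ for every strongly exact filter $F$. The inclusion $F \sqleq \stf(\fts(F))$ (i.e.\ $\stf(\fts(F)) \sue F$ as sets, but $F \sue \stf(\fts(F))$ in the reverse-inclusion order---I would be careful to state this in terms of $\sqleq$) follows from the unit of the adjunction. For the reverse, I would use the characterisation of strong exactness from Lemma~\ref{l:SE-char}: if $b \in \stf(\fts(F))$, then by definition $\fts(F) = \bigcap_{a\in F}\os(a) \sue \os(b)$, and strong exactness of $F$ forces $b\in F$. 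Hence $\stf(\fts(F)) = F$.

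Finally, since $\stf$ and $\fts$ are monotone maps between posets whose composites are identities, they are mutually inverse order-isomorphisms, giving $\SE(L)\cong\So(L)$. I do not expect a serious obstacle here, as the real content has been front-loaded into Lemma~\ref{l:se-basics} and Lemma~\ref{l:SE-char}; the only point requiring care is bookkeeping the reverse-inclusion order $\sqleq$ on filters against the subset order on sublocales, so that the adjunction inequalities are oriented correctly and the claimed map is seen to be order-preserving rather than order-reversing.
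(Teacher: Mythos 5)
Your proof is correct, but it takes a genuinely different route from the paper's. You verify by hand that the adjunction \eqref{eq:fi-su-adj} restricts: \(\fts\) lands in \(\So(L)\) for every filter (an intersection of opens is fitted by definition), \(\stf\) restricts to \(\So(L)\to\SE(L)\) and \(\fts\circ\stf=\id\) on \(\So(L)\) by Lemma~\ref{l:se-basics}, and \(\stf\circ\fts=\id\) on \(\SE(L)\) by an adjunction inequality combined with the characterisation \eqref{eq:se-fi}; mutually inverse monotone maps then give the isomorphism. This is essentially the direct argument of \cite{moshier2020exact}, which the paper deliberately sets aside: the paper instead checks that \(\os\colon L\to\So(L)\) satisfies the universal properties \ref{ax:df} and \ref{ax:cf} for \(\F=\SE(L)\) (using Lemma~\ref{l:se-basics} for \(\bigvee\)-generation and \eqref{eq:se-fi} for the compactness axiom), and then concludes \(\SE(L)=\Int(\SE(L))\cong L^{\SE}\cong\So(L)\) from the uniqueness clause of Theorem~\ref{t:polarities} together with Proposition~\ref{p:fext-concr} and Lemma~\ref{l:SE-intersect}. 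Your route is more elementary and, if anything, establishes the literal statement---that the adjunction itself restricts---more directly; the paper's route buys uniformity, since the same polarity template is what later yields the refinements to \(\Ex(L)\), \(\R(L)\), \(\SO(L)\), \(\CP(L)\) and their universal properties.

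One bookkeeping slip should be repaired (you flagged it yourself). The inclusion that comes for free is the \emph{counit} of \(\stf\dashv\fts\), namely \(\stf(\fts(F))\sqleq F\), which in set terms reads \(F\sue\stf(\fts(F))\): each \(a\in F\) satisfies \(\fts(F)\sue\os(a)\), hence \(a\in\stf(\fts(F))\). It is not the unit (that is \(S\sue\fts(\stf(S))\) on the sublocale side), and it is not \(\stf(\fts(F))\sue F\) as your parenthetical asserts. The set inclusion \(\stf(\fts(F))\sue F\), i.e.\ \(F\sqleq\stf(\fts(F))\), is precisely the direction that needs strong exactness, and it is the one your appeal to Lemma~\ref{l:SE-char} correctly proves. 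So both inclusions are in fact present in your text; only the labels of which direction is free and which needs \eqref{eq:se-fi} must be swapped.
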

\begin{proof}
    Consider the mapping \(\os\colon L \to \So(L)\). Clearly, the image \(\bigmee\)-generates \(\So(L)\) and, by the second part of Lemma~\ref{l:se-basics}, the elements of the form \(\bigmee \os[F]\) for strongly exact filter \(F\) \(\bigvee\)-generates \(\So(L)\). Finally, by \eqref{eq:se-fi}, we have that \(\fts(F) \sue \os(a)\) iff \(a\in F\) iff \(F Z a\). Hence, \(\os \colon L \to \So(L)\) satisfies the axioms \ref{ax:df} and \ref{ax:cf} for \(\F = \SE(L)\).
    As a result \(\SE(L) = \Int(\SE(L)) \cong L^\SE\cong \So(L)\) by Proposition~\ref{p:fext-concr} and Lemma~\ref{l:SE-intersect}.
\end{proof}

\paragraph{Fitted joins.}
\label{par:fit-join}
The lattice \(\So(L)\) can be presented as the sublattice of \(\Sl(L)\) of fixpoints of the \df{fitting operator}
\[\fit\colon \Sl(L) \to \Sl(L),\]
which sends a sublocale \(S\) to \(\bigcap \{\os(a) \mid S \sue \os(a) \}\). Since this is a closure operator, joins and meets in \(\So(L)\) can be computed in \(\Sl(L)\) according to \eqref{eq:fix-cl} as follows.
\begin{equation}
    \textstyle
    \bigmee^{\So(L)} A \ee{=} \bigmee^{\Sl(L)} A \ee= \bigcap A
    \qtq{and}
    \bigfitvee A \ee{=} \fit(\bigvee^{\Sl(L)} A).
    \label{eq:So-lattice}
\end{equation}
To distinguish the joins in \(\So(L)\) from those in \(\Sl(L)\), we call the former ones \df{fitted joins}.
Then, for a set \(M \sue \So(L)\), we write
\[
    \FJ(M) = \{ \bigfitvee A \mid A \sue M \}
\]
for the closure of \(M\) under fitted joins in \(\So(L)\).

\subsection{Smooth sublocales}

In \cite{picado2019Sc}, the authors introduced the lattice
\[\Sc(L) \sue \Sl(L)\]
consisting \df{joins of closed sublocales}, i.e.\ sublocales of the form \(\bigvee_{a\in M} \cs(a)\) for some \(M\sue L\). Among others, \cite{picado2019Sc} shows that that strongly exact filters correspond to \(\Sc(L)\). In fact, Theorem~\ref{t:SE-iso} above is inspired by this fact. However, the isomorphism \(\Ex(L) \cong \Sc(L)\) takes \(\Ex(L)\) in the usual subset order \(\sue\).

Therefore, the isomorphism from \cite{picado2019Sc} cannot be obtained as a restriction of Theorem~\ref{t:SE-iso} along the inclusion \(\Ex(L) \sue \SE(L)\).
In fact, in \cite{moshier2020exact} it is proven that this restriction gives us fittings of supplements of joins of closed sublocales on the other side, i.e. Theorem~\ref{t:SE-iso} restricts to $\Ex(L) \cong \fit[(\Sc(L))\spp]$.
In the following lines we show that sublocales in \(\fit[(\Sc(L))\spp]\) are precisely fittings of smooth sublocales, from \cite{arrieta2021complemented}.

Observe that the filters arising in Proposition \ref{p:Ex-new-char} are the filters associated with \df{locally closed sublocales}, i.e.\ sublocales of the form \(\cs(x) \cap \os(y)\). Indeed,
\begin{equation}
    \stf(\cs(x) \cap \os(y)) = \{ a \mid y \leq a \vee x \} = \upset y \diff \upset x
    \label{eq:stf-smooth-base}
\end{equation}
since \(\cs(x) \cap \os(y) = \os(y) \diff \os(x) \sue \os(a)\) iff \(\os(y) \sue \os(a) \vee \os(x) = \os(a\vee x)\). Therefore, it is justified calling filters of the form \(\upset y \diff \upset x\) \df{locally closed filters}. We denote by 
\[
    \LCl(L)
    \qtq{and}
    \Slc(L)
\]
the class of locally closed filters and locally closed sublocales, respectively. By the same proof as in the first paragraph of Proposition~\ref{p:Ex-new-char} we see that \(\LCl(L) \sue \Ex(L)\). Therefore, recalling~\eqref{eq:int-F} Proposition~\ref{p:Ex-new-char} immediately yields.
\begin{lemma}
    \label{l:ex-lcl}
    Exact filters are the joins of locally closed filters, i.e.\ \(\Ex(L) = \J(\LCl(L))\).
\end{lemma}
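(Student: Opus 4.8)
The plan is to recognise that this lemma is just a repackaging of Proposition~\ref{p:Ex-new-char} in the $\J(\ARG)$-notation, with the only translation step being the passage between joins in the coframe $\Fi(L)$ and intersections of filters. Indeed, by definition (see \eqref{eq:stf-smooth-base}) the locally closed filters $\LCl(L)$ are exactly the filters of the form $\upset y \diff \upset x = \{a \in L \mid y \leq a \vee x\}$, which are precisely the building blocks appearing in Proposition~\ref{p:Ex-new-char}. So morally there is nothing left to prove; I would just spell out the two identifications.

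First I would recall that $\Fi(L)$ is ordered by the reverse inclusion order $\sqleq$, so that joins in $\Fi(L)$ are computed as intersections (cf.\ Section~\ref{s:filt}). Consequently the closure $\J(\LCl(L))$ of $\LCl(L)$ under arbitrary joins taken inside $\Fi(L)$ coincides with $\Int(\LCl(L)) = \{ \bigcap A \mid A \sue \LCl(L)\}$ from \eqref{eq:int-F}; that is, $\J(\LCl(L))$ is exactly the collection of all intersections of locally closed filters.

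Then I would invoke Proposition~\ref{p:Ex-new-char} directly. It states that a filter $F \sue L$ is exact if and only if it is an intersection of filters of the form $\upset y \diff \upset x$, i.e.\ if and only if it is an intersection of locally closed filters. Combining this with the previous step yields
\[
    \Ex(L) = \Int(\LCl(L)) = \J(\LCl(L)),
\]
which is the claim. (In particular the single-filter case of the ``if'' direction already gives $\LCl(L) \sue \Ex(L)$, as noted before the statement.)

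I do not expect any genuine obstacle here: all of the substantive work is carried out in Proposition~\ref{p:Ex-new-char}, and the present lemma is merely its reformulation under the identification $\J(\ARG) = \Int(\ARG)$ valid in $\Fi(L)$. The only point requiring a moment's care is to make the translation between ``join-closure'' and ``intersection-closure'' explicit, so that the reader sees why closing $\LCl(L)$ under joins in the coframe $\Fi(L)$ is the same as forming all intersections of its members.
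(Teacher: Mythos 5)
Your proposal is correct and matches the paper's own reasoning: the paper likewise presents this lemma as an immediate consequence of Proposition~\ref{p:Ex-new-char}, after observing that locally closed filters are exactly the filters $\upset y \diff \upset x$ and that joins in $(\Fi(L),\sqleq)$ are intersections, so that $\J(\LCl(L)) = \Int(\LCl(L))$. No gap; the translation step you spell out is precisely what the paper's appeal to \eqref{eq:int-F} accomplishes.
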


On the other hand, since locally closed filters are in particular strongly exact filters and since the adjunction \(\stf \dashv \fts\) restricts to an isomorphism on strongly exact filters, the description of locally closed filters from~\eqref{eq:stf-smooth-base} immediately yields the following.

\begin{lemma}
    The isomorphism in Theorem~\ref{t:SE-iso} restricts to \(\LCl(L) \cong \fit[\Slc(L)]\).
    \label{l:LCl-iso}
\end{lemma}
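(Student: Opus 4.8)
The plan is to restrict the order isomorphism $\fts \colon \SE(L) \to \So(L)$ from Theorem~\ref{t:SE-iso} and to identify exactly where it sends the locally closed filters. The whole argument rests on a single pointwise observation, namely that $\fts \circ \stf = \fit$ as operators on all of $\Sl(L)$: unwinding the definitions gives
\[
    \fts(\stf(S)) = \bigcap_{a \in \stf(S)} \os(a) = \bigcap \{ \os(a) \mid S \sue \os(a)\} = \fit(S)
\]
for every sublocale $S$.

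With this identity in hand, I would feed in equation~\eqref{eq:stf-smooth-base}, which already records $\stf(\cs(x) \cap \os(y)) = \upset y \diff \upset x$. Applying $\fts$ yields
\[
    \fts(\upset y \diff \upset x) = \fts(\stf(\cs(x) \cap \os(y))) = \fit(\cs(x) \cap \os(y)),
\]
so the isomorphism sends the locally closed filter $\upset y \diff \upset x$ to the fitting of the locally closed sublocale $\cs(x) \cap \os(y)$. As $x,y$ range over $L$, the left-hand sides exhaust $\LCl(L)$ while the right-hand sides exhaust $\fit[\Slc(L)]$, so $\fts$ carries $\LCl(L)$ onto $\fit[\Slc(L)]$.

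It then remains only to assemble the isomorphism. Since locally closed filters are in particular strongly exact, $\LCl(L) \sue \SE(L)$, and since fittings are fitted, $\fit[\Slc(L)] \sue \So(L)$; both sides therefore lie inside the posets related by Theorem~\ref{t:SE-iso}. As $\fts \colon \SE(L) \to \So(L)$ is an order isomorphism and, by the previous step, maps the subposet $\LCl(L)$ bijectively onto $\fit[\Slc(L)]$, its restriction is the desired order isomorphism $\LCl(L) \cong \fit[\Slc(L)]$, with inverse the corresponding restriction of $\stf$.

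I do not anticipate a genuine obstacle here. The only point requiring a moment of care is verifying the identity $\fts \circ \stf = \fit$, after which everything reduces to tracking the already-known correspondence $\stf(\cs(x) \cap \os(y)) = \upset y \diff \upset x$ through the isomorphism and observing that restricting an order isomorphism to a subposet mapping bijectively onto its image is automatically an order isomorphism.
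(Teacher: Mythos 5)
Your proposal is correct and takes essentially the same route as the paper, whose one-line proof likewise combines \(\LCl(L)\sue\SE(L)\), the isomorphism of Theorem~\ref{t:SE-iso}, and the computation \eqref{eq:stf-smooth-base}; your identity \(\fts\circ\stf=\fit\) is exactly the detail that makes the paper's deduction immediate. The one fact you assert without argument, that locally closed filters are strongly exact, is likewise taken as known in the paper (via \(\LCl(L)\sue\Ex(L)\sue\SE(L)\)), so there is no gap.
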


Observe that the isomorphism in Theorem~\ref{t:SE-iso} translates joins of strongly exact filters to fitted joins of the fitted sublocales. Therefore, by the previous two lemmas, exact filters correspond precisely to fitted joins of the fittings of locally closed sublocales. The following significantly simplifies the description of sublocales arising this way.

\begin{lemma}
    Given a class of sublocales \(\mathcal A \sue \Sl(L)\), fitted joins of fittings of sublocales in \(\mathcal A\) coincide with fittings of joins of sublocales in \(\mathcal A\), i.e.\ \(\FJ(\fit[\mathcal A]) = \fit[\J(\mathcal A)]\).
    \label{l:FJ-fittings}
\end{lemma}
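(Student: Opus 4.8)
The plan is to reduce the whole statement to a single identity about the closure operator \(\fit\): for every set \(\mathcal{B}\subseteq\Sl(L)\) of sublocales,
\[
    \fit\left(\bigvee{}^{\Sl(L)} \mathcal{B}\right) = \fit\left(\bigvee{}^{\Sl(L)} \fit[\mathcal{B}]\right),
\]
where \(\fit[\mathcal{B}] = \{\fit(S)\mid S\in\mathcal{B}\}\). Since \(\fit\) is a closure operator (as recalled in the paragraph on fitted joins) the image \(\fit[\mathcal{A}]\) lands inside \(\So(L)\), so both sides of the claimed equality really are sets of fitted sublocales, and by definition \(\bigfitvee\) is just \(\fit\) applied to the \(\Sl(L)\)-join. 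The displayed identity is therefore precisely the mechanism that turns a fitted join of fittings into a fitting of an ordinary join, and conversely.

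First I would establish the identity. The inclusion \(\sue\) is immediate from \(S\sue\fit(S)\) together with monotonicity of \(\fit\). For the reverse inclusion I would note that \(S\sue\bigvee{}^{\Sl(L)}\mathcal{B}\) for each \(S\in\mathcal{B}\) gives \(\fit(S)\sue\fit(\bigvee{}^{\Sl(L)}\mathcal{B})\), hence \(\bigvee{}^{\Sl(L)}\fit[\mathcal{B}]\sue\fit(\bigvee{}^{\Sl(L)}\mathcal{B})\); applying \(\fit\) and using idempotence yields \(\fit(\bigvee{}^{\Sl(L)}\fit[\mathcal{B}])\sue\fit(\bigvee{}^{\Sl(L)}\mathcal{B})\).

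Then the two inclusions of the lemma follow by reindexing. For \(\FJ(\fit[\mathcal{A}])\sue\fit[\J(\mathcal{A})]\), a typical element is \(\bigfitvee B\) with \(B\sue\fit[\mathcal{A}]\); choosing for each member of \(B\) a preimage in \(\mathcal{A}\) produces a subfamily \(\mathcal{A}'\sue\mathcal{A}\) with \(B=\fit[\mathcal{A}']\), and the identity gives \(\bigfitvee B = \fit(\bigvee{}^{\Sl(L)}\fit[\mathcal{A}']) = \fit(\bigvee{}^{\Sl(L)}\mathcal{A}')\), which lies in \(\fit[\J(\mathcal{A})]\). Conversely, a typical element of \(\fit[\J(\mathcal{A})]\) is \(\fit(\bigvee{}^{\Sl(L)}C)\) with \(C\sue\mathcal{A}\); the identity rewrites it as \(\fit(\bigvee{}^{\Sl(L)}\fit[C])=\bigfitvee\fit[C]\), and this belongs to \(\FJ(\fit[\mathcal{A}])\) since \(\fit[C]\sue\fit[\mathcal{A}]\).

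The only real point of care is the bookkeeping in the first inclusion, where one passes from a subset \(B\) of the image \(\fit[\mathcal{A}]\) back to a subfamily of \(\mathcal{A}\) whose fittings are exactly \(B\); this is a routine choice of preimages and causes no genuine difficulty. I therefore expect the proof to be very short once the closure-operator identity above is in place.
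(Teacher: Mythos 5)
Your proof is correct, and the overall reduction is the same as the paper's: everything rests on the single identity \(\fit(\bigvee \fit[\mathcal B]) = \fit(\bigvee \mathcal B)\), after which the set equality \(\FJ(\fit[\mathcal A]) = \fit[\J(\mathcal A)]\) is pure reindexing. Where you differ is in how that identity is established. The paper proves it concretely: it unfolds \(\fit\) as the intersection of all open sublocales above a given sublocale and runs the chain \(\bigvee_i \fit(S_i) \sue \os(a) \iff \forall i\ S_i \sue \os(a) \iff \bigvee_i S_i \sue \os(a) \iff \fit(\bigvee_i S_i) \sue \os(a)\), so the two sides are intersections of exactly the same opens; the key ingredient is the equivalence \(S \sue \os(a) \iff \fit(S) \sue \os(a)\), which the paper also exploits elsewhere (e.g.\ in the proof of Theorem~\ref{t:SO-iso}). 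You instead derive the identity purely from the closure-operator axioms (extensivity, monotonicity, idempotence), so your argument applies verbatim to any closure operator on any complete lattice, with nothing specific to \(\fit\) or to \(\Sl(L)\); in exchange, the paper's version stays closer to the geometry and is a genuine one-liner. You also make explicit the preimage bookkeeping that the paper leaves implicit in its indexed-family notation \(\{S_i\}_i \sue \mathcal A\); note that no choice is actually needed for that step, since the full preimage \(\mathcal A' = \{S \in \mathcal A \mid \fit(S) \in B\}\) already satisfies \(\fit[\mathcal A'] = B\).
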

\begin{proof}
    Given \(\{S_i\}_i \sue \mathcal A\), the fitted join \(\bigfitvee[L][i] \fit(S_i)\) is just the meet of sublocales \(\fit(\bigvee\nolimits_i\, \fit(S_i)) = \bigcap \{ \os(a) \mid \bigvee\nolimits_i\, \fit(S_i) \sue \os(a)\}\) and we have that
    \begin{align*}
        \bigvee\nolimits_i\, \fit(S_i) \sue \os(a)
        &\iff \forall i\ S_i \sue \os(a)
        \\
        &\iff \bigvee\nolimits_i\,  S_i \sue \os(a)
        \iff \fit(\bigvee\nolimits_i\,  S_i) \sue \os(a).
        \qedhere
    \end{align*}
\end{proof}

Combining all the above we obtain that exact filters precisely correspond to fittings of joins of locally closed sublocales. The latter class of sublocales has been studied in~\cite{arrieta2021complemented}. In fact, the sublocales of the form \(\textstyle S = \bigvee_i \, (\cs(x_i) \cap \os(y_i))\) for some set of \(x_i\)'s and \(y_i\)'s, are called \df{smooth sublocales} and correspond precisely to joins of complemented sublocales.
Upon denoting the collection of smooth sublocales by \(\Sb(L)\), we can conclude the following.

\begin{theorem}
    The isomorphism in Theorem~\ref{t:SE-iso} restricts to \(\Ex(L) \cong \fit[\Sb(L)]\).
    \label{t:Ex-iso}
\end{theorem}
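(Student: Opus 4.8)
The plan is to assemble the statement purely by transporting the three preceding lemmas through the isomorphism of Theorem~\ref{t:SE-iso}; no new computation with Heyting operations is needed. The starting point is Lemma~\ref{l:ex-lcl}, which presents exact filters as the join-closure of locally closed filters, \(\Ex(L) = \J(\LCl(L))\). The goal is to show that this join-closure is carried, by the isomorphism \(\SE(L) \cong \So(L)\), onto \(\fit[\Sb(L)]\). Since \(\LCl(L) \sue \Ex(L) \sue \SE(L)\), everything lives inside \(\SE(L)\), so the restriction of the isomorphism is available throughout.

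The central step is to identify how the closure operators match up under the isomorphism. First I would note that, by Lemma~\ref{l:SE-intersect}, \(\SE(L)\) is closed under intersections, so the closure \(\J(\LCl(L))\) computed in \(\Fi(L)\) coincides with closure under joins inside the complete lattice \(\SE(L)\). As recorded in the text just before the statement, the isomorphism of Theorem~\ref{t:SE-iso} is a complete lattice isomorphism that sends joins of strongly exact filters (i.e.\ intersections in \(\Fi(L)\)) to fitted joins of fitted sublocales. Consequently, for the generating family \(\LCl(L)\) we get
\[
    \J(\LCl(L)) \ee\cong \FJ(\fit[\Slc(L)]),
\]
where the image \(\fit[\Slc(L)]\) of the locally closed filters is exactly that of Lemma~\ref{l:LCl-iso}. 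In other words, the isomorphism intertwines the operator \(\J(\ARG)\) on the filter side with the operator \(\FJ(\ARG)\) on the sublocale side, applied to corresponding generators.

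To finish, I would rewrite the right-hand side using Lemma~\ref{l:FJ-fittings} with \(\mathcal A = \Slc(L)\), giving \(\FJ(\fit[\Slc(L)]) = \fit[\J(\Slc(L))]\), and then recall that smooth sublocales are by definition the joins of locally closed (complemented) sublocales, i.e.\ \(\Sb(L) = \J(\Slc(L))\). Chaining these identifications yields \(\Ex(L) \cong \fit[\Sb(L)]\). I expect the only delicate point to be the bookkeeping in the central step: one must be sure that the abstract lattice isomorphism genuinely carries the \emph{closure} \(\J(\ARG)\) to the \emph{closure} \(\FJ(\ARG)\), not merely individual joins. This is where the combination of Lemma~\ref{l:SE-intersect} (so that \(\J\) is an honest join-closure inside the complete lattice \(\SE(L)\)) and the preservation of arbitrary joins by a complete lattice isomorphism does the work; once those are invoked explicitly, the rest is a direct substitution of Lemmas~\ref{l:LCl-iso} and~\ref{l:FJ-fittings}.
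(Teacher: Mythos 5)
Your proof is correct and takes essentially the same route as the paper: the paper's argument for Theorem~\ref{t:Ex-iso} is precisely the chain \(\Ex(L) = \J(\LCl(L)) \cong \FJ(\fit[\Slc(L)]) = \fit[\J(\Slc(L))] = \fit[\Sb(L)]\), assembled from Lemmas~\ref{l:ex-lcl}, \ref{l:LCl-iso}, and \ref{l:FJ-fittings} together with the observation that the isomorphism of Theorem~\ref{t:SE-iso} translates joins of strongly exact filters into fitted joins. Your explicit appeal to Lemma~\ref{l:SE-intersect} to justify that \(\J(\ARG)\) is an honest join-closure inside the complete lattice \(\SE(L)\) only spells out a bookkeeping point the paper leaves implicit.
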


\subsection{Joins of closed sublocales}

The results from the previous section, especially Theorem~\ref{t:Ex-iso}, naturally lead to further questions about the nature of the frame \(\Sc(L)\) of joins of closed sublocales. Observe that we have \(\Sc(L) \sue \Sb(L)\) and so there must be a restriction of \(\Ex(L)\) that corresponds to \(\fit[\Sc(L)]\). To this end, we appeal to the notion of closed filters which we have introduced in Section~\ref{s:Cl-R-filt}. The reason why we call these filter closed is because they are of the form \(\stf(\cs(a))\):
\begin{align}
    \stf(\cs(a))
    = \{ x\in L \mid \cs(a) \sue \os(x)\}
    = \{ x\in L \mid \Fll \diff \os(a) \sue \os(x)\}
    = \{ x\in L \mid x \vee a = 1\}
    \label{eq:cl-filt}
\end{align}

Wee see that closed filters are a special kind of locally closed filters, which in turn are a subclass of exact filters
\[\Cl(L) \sue \LCl(L) \sue \Ex(L).\]
Consequently, the following theorem is a genuine refinement of Theorem~\ref{t:Ex-iso}.

\begin{proposition}
    \label{p:Cl-iso}
    The isomorphism in Theorem~\ref{t:SE-iso} restricts to \(\Int(\Cl(L)) \cong \fit[\Sc(L)]\).
\end{proposition}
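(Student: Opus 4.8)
The plan is to mimic, step for step, the derivation of Theorem~\ref{t:Ex-iso}, replacing locally closed sublocales with closed ones. First I would establish the closed-filter analogue of Lemma~\ref{l:LCl-iso}. By \eqref{eq:cl-filt} every closed filter has the form \(\stf(\cs(a))\), and closed filters are strongly exact since \(\Cl(L) \sue \LCl(L) \sue \SE(L)\). Hence each \(\stf(\cs(a))\) lies in the domain of the isomorphism of Theorem~\ref{t:SE-iso}, which sends a strongly exact filter \(F\) to \(\fts(F)\). Computing \(\fts(\stf(\cs(a))) = \bigcap\{\os(b) \mid \cs(a) \sue \os(b)\} = \fit(\cs(a))\), I conclude that the isomorphism restricts to a bijection between \(\Cl(L)\) and the class \(\fit[\{\cs(a) \mid a\in L\}]\) of fittings of closed sublocales.

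Next I would pass to intersections. By Lemma~\ref{l:regular}, \(\Int(\Cl(L)) = \R(L)\) is exactly the closure of \(\Cl(L)\) under intersections; and, as recalled after Proposition~\ref{p:fext-concr}, intersections of filters are precisely the joins in the complete lattice \(\SE(L)\) (these stay inside \(\SE(L)\) by Lemma~\ref{l:SE-intersect}). Since the map of Theorem~\ref{t:SE-iso} is an isomorphism of complete lattices, it carries these joins to the fitted joins in \(\So(L)\). Therefore the image of \(\Int(\Cl(L))\) is the closure of \(\fit[\{\cs(a) \mid a\in L\}]\) under fitted joins, namely \(\FJ(\fit[\{\cs(a) \mid a\in L\}])\).

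Finally I would invoke Lemma~\ref{l:FJ-fittings} with \(\mathcal A = \{\cs(a) \mid a\in L\}\), which gives \(\FJ(\fit[\mathcal A]) = \fit[\J(\mathcal A)]\). As \(\J(\mathcal A)\) is by definition the collection of joins of closed sublocales, i.e.\ \(\Sc(L)\), this yields the claimed restriction \(\Int(\Cl(L)) \cong \fit[\Sc(L)]\).

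I expect no genuine obstacle here, only bookkeeping to pin down: the main point to verify carefully is the orientation of the various joins, namely that the \(\sqleq\)-joins in \(\SE(L)\) are literally the set-theoretic intersections (so that \(\Int(\Cl(L))\) really is the join-closure of \(\Cl(L)\) inside \(\SE(L)\)), and that these are the joins transported by the isomorphism to fitted joins in \(\So(L)\). Once this correspondence of joins is made precise, each of the three steps is a direct instance of an already-proved lemma, exactly paralleling the smooth/exact case, so the argument closes immediately.
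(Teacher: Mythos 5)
Your proposal is correct and follows essentially the same route as the paper: identifying closed filters as the \(\stf\)-images of closed sublocales via \eqref{eq:cl-filt}, transporting intersections through the isomorphism of Theorem~\ref{t:SE-iso} to fitted joins of fittings of closed sublocales, and then collapsing \(\FJ(\fit[\cs[L]])\) to \(\fit[\Sc(L)]\) by Lemma~\ref{l:FJ-fittings}. The only cosmetic difference is that the paper cites Theorem~\ref{t:Ex-iso} for the middle step, whereas you re-derive the restriction directly from Theorem~\ref{t:SE-iso}; the mathematical content is identical.
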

\begin{proof}
    By \eqref{eq:cl-filt} we know that a filter is closed if and only if it is the \(\stf(\ARG)\) image of a closed sublocale. Therefore, intersections of closed filters \(\Int(\Cl(L))\) are isomorphic to fitted joins of fittings of closed sublocales by Theorem~\ref{t:Ex-iso}.

    What is left to show is that these are precisely fittings of joins of closed sublocales. This follows from Lemma~\ref{l:FJ-fittings}.
\end{proof}

Furthermore, Lemma~\ref{l:regular} gives the following rephrasing of Proposition~\ref{p:Cl-iso}.

\begin{theorem}
    \label{t:Cl-iso}
    The isomorphism in Theorem~\ref{t:SE-iso} restricts to \(\R(L) \cong \fit[\Sc(L)]\).
\end{theorem}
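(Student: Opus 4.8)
The plan is to recognise that this statement is essentially immediate once we combine the two results immediately preceding it, namely Proposition~\ref{p:Cl-iso} and Lemma~\ref{l:regular}. The substantive content—identifying intersections of closed filters with fittings of joins of closed sublocales—has already been extracted in Proposition~\ref{p:Cl-iso}, whose proof in turn relies on Theorem~\ref{t:Ex-iso} and the collapsing of fitted joins of fittings into fittings of joins supplied by Lemma~\ref{l:FJ-fittings}. So the remaining task is purely one of rephrasing the left-hand side of that isomorphism.

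Concretely, I would first invoke Lemma~\ref{l:regular}, which gives not merely an isomorphism but an honest equality of subsets of \(\Fi(L)\): \(\R(L) = \Int(\Cl(L))\). This is the key observation, since it means the collection \(\R(L)\) of regular filters and the collection \(\Int(\Cl(L))\) of intersections of closed filters are literally the same sub(co)locale of \(\Fi(L)\), sitting inside \(\SE(L)\) in the same way. Consequently the restriction of the isomorphism \(\SE(L) \cong \So(L)\) of Theorem~\ref{t:SE-iso} to \(\R(L)\) is identical to its restriction to \(\Int(\Cl(L))\).

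Then I would simply apply Proposition~\ref{p:Cl-iso}, which identifies that very restriction with \(\fit[\Sc(L)]\). Chaining the two gives
\[
    \R(L) = \Int(\Cl(L)) \ee\cong \fit[\Sc(L)],
\]
as desired, with the isomorphism being the appropriate corestriction of \(\stf\) (equivalently, \(\fts\) in the reverse direction). I do not anticipate any genuine obstacle here: all the difficulty was front-loaded into Proposition~\ref{p:Cl-iso} and Lemma~\ref{l:regular}, and the only thing to be careful about is to stress that Lemma~\ref{l:regular} yields a set-theoretic equality rather than a mere abstract isomorphism, so that the restricted isomorphism transports unchanged from \(\Int(\Cl(L))\) to \(\R(L)\).
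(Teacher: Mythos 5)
Your proof is correct and takes essentially the same route as the paper, which presents Theorem~\ref{t:Cl-iso} precisely as the rephrasing of Proposition~\ref{p:Cl-iso} obtained from the equality \(\R(L) = \Int(\Cl(L))\) of Lemma~\ref{l:regular}. Your emphasis that this lemma gives a set-theoretic equality (so the restricted isomorphism transports verbatim) is exactly the point the paper leaves implicit.
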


\subsection{Joins of compact sublocales}

The celebrated Hofmann-Mislove Theorem establishes a correspondence between compact saturated subsets of a sober space and Scott-open filters on the frame of open of the space. Johnstone gave a pointfree analogue of this result.

\begin{proposition}
    \label{t:johnstone}
    The isomorphism in Theorem~\ref{t:SE-iso} restricts to the isomorphism between \(\SO(L)\) and \df{compact fitted sublocales} of \(L\), that is, sublocales \(S\in \So(L)\) such that \(S \sue \bigvee_{a\in A} \os(a)\) implies \(S \sue \os(a_1) \vee \dots \vee \os(a_n)\) for some \(a_1,\dots,a_n\in A\)..
\end{proposition}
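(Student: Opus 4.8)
The plan is to show directly that the isomorphism $\SE(L) \cong \So(L)$ of Theorem~\ref{t:SE-iso} carries Scott-open filters exactly onto the compact fitted sublocales. Since Scott-open filters are strongly exact by Proposition~\ref{p:so-vs-se}, they sit inside $\SE(L)$, and the isomorphism sends a strongly exact filter $F$ to the fitted sublocale $\fts(F)$, with inverse $\stf$. Because $\fts$ and $\stf$ are mutually inverse bijections between $\SE(L)$ and $\So(L)$, it suffices to prove the single equivalence: for $F \in \SE(L)$, the filter $F$ is Scott-open if and only if $\fts(F)$ is compact.

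First I would translate the compactness condition on $S = \fts(F)$ (which is automatically in $\So(L)$) into a condition on $F$. Using that $\os$ preserves joins, so that $\bigvee_{a\in A}\os(a) = \os(\bigvee A)$ and $\os(a_1)\vee\dots\vee\os(a_n) = \os(a_1\vee\dots\vee a_n)$, together with the defining equivalence $\fts(F)\sue\os(b) \iff b\in F$ for strongly exact $F$ (Lemma~\ref{l:SE-char} and \eqref{eq:se-fi}), compactness of $\fts(F)$ becomes: for every $A\sue L$, if $\bigvee A \in F$ then $a_1\vee\dots\vee a_n\in F$ for some finite $\{a_1,\dots,a_n\}\sue A$.

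It then remains to show that this finite-subfamily condition is equivalent to Scott-openness of $F$, and this is the crux of the argument, the only place needing a genuine idea beyond routine unwinding. For the forward direction, given $\bigvee A \in F$ with $F$ Scott-open, I would form the set $D = \{\, a_1\vee\dots\vee a_n \mid \{a_1,\dots,a_n\}\sue A \text{ finite}\,\}$ of all finite subjoins; it is directed (the join over a union of two finite sets is again a finite subjoin), its join equals $\bigvee A \in F$, so Scott-openness produces a finite subjoin lying in $F$. Conversely, assuming the finite-subfamily condition and a directed $D$ with $\bigvee D \in F$, I would extract a finite $D_0\sue D$ with $\bigvee D_0 \in F$; directedness of $D$ gives an upper bound $d\in D$ of $D_0$, and upward closure of $F$ then yields $d\in F$.

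The main obstacle is precisely this passage between arbitrary and directed joins; everything else is a direct translation through the isomorphism. Once the equivalence is established it proves simultaneously that $\fts$ sends Scott-open filters to compact fitted sublocales and that $\stf$ sends compact fitted sublocales back to Scott-open filters, so the isomorphism of Theorem~\ref{t:SE-iso} restricts as claimed.
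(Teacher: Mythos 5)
Your proof is correct, but it takes a genuinely different route from the paper: the paper's proof of this proposition is a pure citation (it is Lemma~3.4 of \cite{johnstone1985vietoris}, see also \cite{escardo2003joins}), whereas you derive the result from machinery already established in the paper. Concretely, you use Proposition~\ref{p:so-vs-se} to place $\SO(L)$ inside $\SE(L)$, the characterisation \eqref{eq:se-fi} of strongly exact filters together with the identity $\bigvee_{a\in A}\os(a)=\os(\bigvee A)$ to translate compactness of $\fts(F)$ into the ``finite subjoin'' property of $F$, and then settle the purely order-theoretic equivalence between that property and Scott-openness by the standard directed-set argument (passing to the directed family of finite subjoins in one direction, and using an upper bound of a finite subfamily in the other). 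This argument is sound; the only caveats are the usual conventions about empty directed sets and empty subfamilies, which affect only the degenerate pair $F=L$, $S=\Emp$ and are a matter of bookkeeping. What your route buys is a self-contained proof exhibiting Johnstone's lemma as a routine translation once Theorem~\ref{t:SE-iso} is available; what it costs is the paper's subsequent remark: the paper notes that Proposition~\ref{t:johnstone} gives an alternative, \emph{constructive} proof of Lemma~\ref{l:SO-SE}, but your derivation consumes that very lemma as an input (via Proposition~\ref{p:so-vs-se}, which the paper proves with Zorn's Lemma), so under your proof that remark would become circular and the non-constructive ingredient would remain.
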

\begin{proof}
    This is Lemma~3.4 in~\cite{johnstone1985vietoris}, see also~\cite{escardo2003joins}.
\end{proof}

This theorem gives an alternative (and constructive) proof of Lemma~\ref{l:SO-SE} that \(\SO(L) \sue \SE(L)\). Indeed, since \(\SE(L)\) are the fixpoints of the adjunction \eqref{eq:fi-su-adj}, by Proposition~\ref{t:johnstone}, \(\SO(L)\) is in the image of the mapping \(\stf\).

However, unlike exact and strongly exact filters, Scott-open filters are not necessarily closed under intersection. Therefore, the arising filter extension \(L^\SO = \Int(\SO(L))\) induces a non-trivial extension of Johnstone's theorem.
\begin{proposition}
    The isomorphism in Theorem~\ref{t:SE-iso} restricts to an isomorphism between \(\Int(\SO(L))\) and fitted joins of compact fitted sublocales.
\end{proposition}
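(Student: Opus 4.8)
The plan is to obtain this as a formal consequence of Johnstone's correspondence (Proposition~\ref{t:johnstone}) together with the observation, recorded just before Theorem~\ref{t:Ex-iso}, that the isomorphism of Theorem~\ref{t:SE-iso} carries joins of strongly exact filters to fitted joins of fitted sublocales. No genuinely new content is needed; the argument is entirely a matter of checking that the two relevant ``closure under joins'' operations correspond under the isomorphism.

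First I would check that the restriction is well defined. By Proposition~\ref{p:so-vs-se} we have \(\SO(L)\sue\SE(L)\), and since \(\SE(L)\) is closed under intersections (Lemma~\ref{l:SE-intersect}), the class \(\Int(\SO(L))\) of intersections of Scott-open filters is contained in \(\SE(L)\). Hence the isomorphism \(\fts\colon \SE(L)\to\So(L)\) of Theorem~\ref{t:SE-iso} may be applied to each of its elements, and it suffices to compute the image \(\fts[\Int(\SO(L))]\).

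Next I would recall how joins are computed on the two sides. In \(\SE(L)\) the joins in the \(\sqleq\) order agree with those of \(\Fi(L)\), namely intersections, so \(\Int(\SO(L))\) is exactly the closure of \(\SO(L)\) under joins inside \(\SE(L)\). On the sublocale side the joins of \(\So(L)\) are the fitted joins \(\bigfitvee\) of \eqref{eq:So-lattice}. Since \(\fts\) is an isomorphism of complete lattices it preserves arbitrary joins, so \(\fts(\bigsqcup A) = \bigfitvee \fts[A]\) for every family \(A\sue \SE(L)\); this is precisely the translation of joins into fitted joins quoted above.

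Finally I would assemble the pieces. Write \(K\) for the class of compact fitted sublocales. By Proposition~\ref{t:johnstone}, \(\fts\) restricts to a bijection between \(\SO(L)\) and \(K\). Applying join-preservation to arbitrary families in \(\SO(L)\), the image of a typical element \(\bigsqcup A = \bigcap A\) of \(\Int(\SO(L))\), with \(A\sue\SO(L)\), is \(\bigfitvee \fts[A]\) with \(\fts[A]\sue K\); conversely every fitted join of members of \(K\) arises this way. Thus \(\fts[\Int(\SO(L))] = \FJ(K)\), the class of fitted joins of compact fitted sublocales. As \(\fts\) is injective and order-reflecting, this restriction is an isomorphism, which is the claim. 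The only point demanding care---and the nearest thing to an obstacle---is the bookkeeping that intersections of filters are exactly the joins of \(\SE(L)\) and that these match the fitted joins of \(\So(L)\) under \(\fts\), so that the join-closure of \(\SO(L)\) corresponds to the fitted-join-closure of \(K\).
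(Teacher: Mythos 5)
Your proof is correct and follows exactly the route the paper intends: the paper states this proposition without a separate proof, treating it as an immediate consequence of Proposition~\ref{t:johnstone} together with the observation (made after Lemma~\ref{l:LCl-iso}) that the isomorphism of Theorem~\ref{t:SE-iso} carries joins of strongly exact filters (intersections) to fitted joins of fitted sublocales. Your write-up simply makes that implicit bookkeeping explicit, including the needed fact that \(\Int(\SO(L))\sue\SE(L)\) via Lemma~\ref{l:SE-intersect}.
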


This proposition already gives an already quite intuitive topological description of the sublocales corresponding to filters in \(\Int(\SO(L))\). However, we can specify these sublocales in another slightly simpler way. To this end, set
\begin{itemize}
    \item \(\Sk(L)\) to be the poset of joins of compact sublocales in \(\Sl(L)\).
\end{itemize}

\begin{theorem}
    \label{t:SO-iso}
    The isomorphism in Theorem~\ref{t:SE-iso} restricts to \(\Int(\SO(L)) \cong \fit[\Sk(L)]\).
\end{theorem}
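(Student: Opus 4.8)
The plan is to reduce the statement to a purely sublocale-theoretic identity and then invoke the machinery already in place. The preceding proposition has identified \(\Int(\SO(L))\), via the isomorphism of Theorem~\ref{t:SE-iso}, with the class of fitted joins of compact fitted sublocales. So it suffices to prove that these fitted joins coincide with \(\fit[\Sk(L)]\). Writing \(\mathcal K\) for the class of compact sublocales of \(L\), we have \(\Sk(L) = \J(\mathcal K)\) by definition, and my strategy is to show that the compact fitted sublocales are precisely the members of \(\fit[\mathcal K]\); Lemma~\ref{l:FJ-fittings} then closes the gap in one step.

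First I would verify that \(\fit[\mathcal K]\) consists of compact fitted sublocales. Fittings are automatically fitted, so only compactness needs checking. Given a compact sublocale \(K\) and a cover \(\fit(K) \sue \bigvee_{a\in A} \os(a)\), the inclusion \(K \sue \fit(K)\) together with compactness of \(K\) furnishes \(a_1, \dots, a_n \in A\) with \(K \sue \os(a_1) \vee \dots \vee \os(a_n) = \os(a_1 \vee \dots \vee a_n)\). Since the right-hand side is open, hence fitted, and \(\fit(K)\) is the least fitted sublocale containing \(K\), we get \(\fit(K) \sue \os(a_1) \vee \dots \vee \os(a_n)\), so \(\fit(K)\) is compact. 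Conversely, any compact fitted sublocale \(S\) satisfies \(S = \fit(S)\) and is itself a compact sublocale, hence lies in \(\fit[\mathcal K]\). This establishes the identity \(\{\text{compact fitted sublocales}\} = \fit[\mathcal K]\).

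With this identity secured, I would apply Lemma~\ref{l:FJ-fittings} with \(\mathcal A = \mathcal K\) to obtain
\[
    \FJ(\fit[\mathcal K]) = \fit[\J(\mathcal K)] = \fit[\Sk(L)].
\]
The left-hand side is exactly the class of fitted joins of compact fitted sublocales, so combining with the preceding proposition yields \(\Int(\SO(L)) \cong \fit[\Sk(L)]\), as required. The one genuinely non-formal point, and what I expect to be the crux, is the stability of compactness under the fitting operator: it hinges on the fact that a finite join of open sublocales is again open, hence fitted, which lets the minimality of \(\fit(K)\) transport a finite subcover of \(K\) up to \(\fit(K)\). Everything else is bookkeeping with Theorem~\ref{t:SE-iso}, Lemma~\ref{l:FJ-fittings}, and the preceding proposition.
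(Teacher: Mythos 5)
Your proposal is correct and follows essentially the same route as the paper: both proofs first identify the compact fitted sublocales with the fittings of compact sublocales (the paper compresses your minimality argument into the observation that \(S \sue \os(a)\) iff \(\fit(S) \sue \os(a)\), which is the same fact) and then apply Lemma~\ref{l:FJ-fittings} together with the preceding proposition on fitted joins of compact fitted sublocales.
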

\begin{proof}
    First, observe that compact fitted sublocales are exactly the fittings of compact sublocales.
    The inclusion from left to right is immediate. For the other direction, observe that $S\sue \os(a)$ if and only if $\fit(S)\sue \os(a)$ for any sublocale $S$ and $a\in L$. Therefore, if a sublocale is compact then its fitting is compact too.

    From this and Lemma~\ref{l:FJ-fittings} we see that every fitted join of compact fitted sublocale is in fact of the form \(\fit(\bigvee\nolimits_i K_i)\) for some compact \(K_i\)'s.
\end{proof}

\subsection{Spatial sublocales}

In our final refinement of the adjunction \eqref{eq:fi-su-adj} we further restrict Theorem~\ref{t:SO-iso} along the inclusion \(\CP(L)\sue \SO(L)\) and describe the sublocales corresponding to the class of completely prime filters.

To this end recall, e.g.\ from \cite{arrieta2023spatial} or IX.3.3 in \cite{pp2021book2}, that an \(p\in L\) is \df{prime} iff \(\{p,1\}\) is a sublocale of \(L\). When this is the case, we call \(\{p,1\}\) a \df{one-point sublocale} and denote it by \(\bs(p)\).

Moreover, we can define the \df{spatialization operation} on sublocales:
\[
    \spa:\SL \to \SL,
    \qquad
    S \sue L
    \ee\mapsto
    \bigvee \{ \bs(p) \mid p\in \pt(L),\ \bs(p) \sue S\}
\]
With this, define
\begin{itemize}
    \item \(\Ssp(L)\) as the poset of \df{spatial sublocales} of \(L\), that is, sublocales \(S \sue L\) such that \(S = \spa(S)\).
\end{itemize}
Observe that \(\spa\) is an interior operator which preserves finite joins. As a consequence, $\spa$ is a coframe map \(\SL\to \Ssp(L)\) and therefore $\Ssp(L)\sue \SL$ is a subcolocale inclusion. On the other hand, the following theorem is crucial in establishing that \(\fit[\Ssp(L)] \sue \So(L)\) is a subcolocale of \(\So(L)\) in Section~\ref{s:subcolocales} below.

\begin{theorem}
    \label{t:CP-iso}
    The isomorphism in Theorem~\ref{t:SE-iso} restricts to \(\Int(\CP(L)) \cong \fit[\Ssp(L)]\).
\end{theorem}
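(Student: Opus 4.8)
The plan is to follow the same two-step strategy used for Theorems~\ref{t:SO-iso} and~\ref{t:Cl-iso}: first identify the fitted sublocale to which a single completely prime filter corresponds under the isomorphism of Theorem~\ref{t:SE-iso}, and then pass to intersections by invoking Lemma~\ref{l:FJ-fittings}.

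First I would show that completely prime filters correspond exactly to the fittings of one-point sublocales. Recall from Section~\ref{s:frm-basics} that a point is at once a completely prime filter \(P\) and a prime element \(p = \bigvee \{ a \mid a\notin P\}\), and that these satisfy \(a\in P\) iff \(a\not\leq p\). I would establish the key identity \(\stf(\bs(p)) = P\): since \(\bs(p) = \{p,1\}\) and \(1\in \os(a)\) always, we have \(\bs(p) \sue \os(a)\) iff \(p\in \os(a)\) iff \(a\to p = p\), and for a prime \(p\) this last condition is equivalent to \(a\not\leq p\), i.e.\ to \(a\in P\). Because \(\fts\circ\stf\) is exactly the fitting operator \(\fit\), it follows that \(\fts(P) = \fit(\bs(p))\). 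Hence, under the isomorphism \(\SE(L) \cong \So(L)\), the class \(\CP(L)\) corresponds precisely to \(\fit[\{ \bs(p) \mid p\in \pt(L)\}]\).

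Next I would pass to intersections. As noted just before Theorem~\ref{t:SO-iso}, the isomorphism translates joins of strongly exact filters—that is, intersections in \(\Fi(L)\)—into fitted joins in \(\So(L)\). Therefore \(\Int(\CP(L))\) corresponds to the fitted joins of fittings of one-point sublocales, namely \(\FJ(\fit[\{\bs(p)\}])\). Applying Lemma~\ref{l:FJ-fittings} with \(\mathcal A = \{\bs(p) \mid p\in \pt(L)\}\) turns this into \(\fit[\J(\{\bs(p)\})]\). Finally I would observe that \(\J(\{\bs(p)\})\), the closure of the one-point sublocales under arbitrary joins, is exactly \(\Ssp(L)\): every join of one-point sublocales is spatial, and conversely a spatial sublocale is by definition \(\spa(S) = \bigvee\{\bs(p) \mid \bs(p) \sue S\}\), a join of one-point sublocales. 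Combining these gives \(\Int(\CP(L)) \cong \fit[\Ssp(L)]\).

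The main obstacle is the single-filter identification \(\stf(\bs(p)) = P\), which rests entirely on the Heyting-algebra fact that \(a\to p = p\) iff \(a\not\leq p\) for a prime \(p\); once this is in hand, everything downstream is a formal consequence of Lemma~\ref{l:FJ-fittings}, the already-recorded behaviour of the isomorphism on joins, and the definition of spatial sublocales.
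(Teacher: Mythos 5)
Your proof is correct and takes essentially the same route as the paper's: the heart of both arguments is the computation \(\bs(p)\sue\os(a) \iff a\to p=p \iff a\nleq p\) (the paper's \eqref{eq:blp-os}), used to identify completely prime filters with the \(\stf\)-images of one-point sublocales, followed by the same passage to intersections via join-preservation of the isomorphism of Theorem~\ref{t:SE-iso} together with Lemma~\ref{l:FJ-fittings} and the observation that joins of one-point sublocales are exactly the spatial sublocales. The one step you should make explicit is that applying Theorem~\ref{t:SE-iso} to \(\CP(L)\) presupposes \(\CP(L)\sue\SE(L)\) --- the paper gets this from Lemma~\ref{l:SO-SE} via \(\CP(L)\sue\SO(L)\); alternatively, your identity \(P=\stf(\bs(p))\) yields it directly, since any filter of the form \(\stf(S)\) satisfies the criterion of Lemma~\ref{l:SE-char} (from \(S\sue\fts(\stf(S))\sue\os(b)\) one gets \(b\in\stf(S)\)).
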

\begin{proof}
    First, observe that for a prime \(p\in L\),
    \begin{equation}
        \bs(p)\sue \os(a)
        \ee\iff
        a \nleq p
        \label{eq:blp-os}.
    \end{equation}
    Indeed, \(\bs(p)\sue \os(a)\) iff \(a \to p = p\) iff \(a \nleq p\) where the last equivalence holds because \(a \to b = 1\) iff \(a\leq b\) and because \(p \not= 1\) since \(p\) is prime.

    With this, we show that a filter \(F\) is completely prime if and only if it is $\stf(\bs(p))$ for some prime $p\in L$. For the left-to-right direction, suppose that we have a prime $p\in \pt(L)$ and that \(\bve_i x_i \in \stf(\bs(p))\), that is, $\bs(p)\sue \os(\bve_i x_i)$.
    Then, by \eqref{eq:blp-os}, $\bve_i x_i\nleq p$, which implies that $x_j\nleq p$ for some $j\in I$, that is, $\bs(p)\sue \os(x_j)$. Therefore the filter $\stf(\bs(p))$ is completely prime.

    For the converse, suppose that $F\sue L$ is a completely prime filter.
    Consider the prime element $p_F=\bve (L\sm F)$.
    We clearly have that $f\nleq p_F$ for all $f\in F$ which, by \eqref{eq:blp-os}, implies $F\sue \stf(\bs(p_F))$. Finally, notice that, by definition of $p_F$, $f\nleq p_F$ implies $f\in F$.

    This gives us a poset isomorphism \(\CP(L) \cong \{ \bs(p) \mid p\in \pt(L)\}\). Because $\fts$ restricted to \(\SE(L)\) is an isomorphism of complete lattices (by Theorem~\ref{t:SE-iso}) and because \(\CP(L) \sue \SE(L)\) (by Lemma~\ref{l:SO-SE}), intersections of completely prime filters correspond to fitted joins of one-point sublocales. Furthermore, since joins of one-point sublocales are precisely spatial sublocales, we obtain that \(\FJ(\{ \bs(p) \mid p\in \pt(L)\}) = \fit[\Ssp(L)]\) as required.
\end{proof}

\subsection{The big picture}
\label{s:subcolocales}

Corollary~\ref{c:all-subcoloc} together with Theorems \ref{t:SE-iso}, \ref{t:Ex-iso}, \ref{t:Cl-iso}, \ref{t:SO-iso}, and \ref{t:CP-iso} allow us to rephrase the subcolocale inclusions of classes of filters from \eqref{eq:fi-inc} as the following inclusion of classes of fitted sublocales.
\begin{equation}
    \begin{tikzcd}[row sep=1em, column sep=1.5em]
        \fit[\Sc(L)] \rar[hook] & \fit[\Sb(L)] \ar[hook]{rd} \\
            && \So(L) \\
        \fit[\Ssp(L)] \rar[hook] & \fit[\Sk(L)] \ar[hook]{ru}
    \end{tikzcd}
    \label{eq:all-subcoloc}
\end{equation}
Recall that \(\So(L)\) itself is a sub-coframe of \(\SL\) \cite{moshier2020exact}. Some of these results were already covered in the literature.
In particular, the lattices \(\So(L)\) and \(\Sc(L)\op \cong \fit[\Sb(L)]\) are known to be coframes \cite{picado2019Sc,moshier2020exact}.  Whilst relying on non-trivial facts from \cite{picado2019Sc}, it is proved in \cite{moshier2020exact} that the sequence \(\Ex(L) \sue \SE(L) \sue \Fi(L)\) is a sequence of subcolocale inclusions.  Separately, it is also known that \(\fit[\Sk(L)]\) is a coframe when \(L\) is spatial or stably locally compact \cite{jakl20}. And finally, in an unpublished note, Joshua L. Wrigley has observed that \(L^\CP\) is a coframe too (private communication).

Furthermore, if a frame is \df{fit}, i.e.\ if all sublocales are fitted, we obtain the simplification \eqref{eq:sl-inc} of \eqref{eq:all-subcoloc} from the Introduction. Note that \(\Sc(L) = \Sb(L)\) follows from Proposition~\ref{p:sfre} and the fact that every fit frame is subfit.

\section{Immediate consequences of the theory of polarities}

In Section~\ref{s:relating}, we established a number of correspondences between classes of filters closed under intersections and classes of sublocales. In fact, since \(L^\F \cong \J(\F)\) (cf.\ Proposition~\ref{p:fext-concr}), the main theorems in the previous section can be rephrased as
\begin{equation}
    L^\F \cong \fit[\mathcal C]
    \label{eq:fe-vs-fit-C}
\end{equation}
for some class of filters \(\F\sue \Fi(L)\) and a class of sublocales \(\mathcal C \sue \Sl(L)\). Recalling that the filter extension \(L^\F\) is given as \(\GC(P)\) for the polarity of the form \(P = (\F,L,\ni)\) (cf.\ Section~\ref{s:filter-ext}), in this section we make use of the connections with the theory of polarities to obtain new facts about filters and sublocales.

\subsection{Isomorphisms with interiors of fitted sublocales}
Observe that the operator \(\fit(\ARG)\) on sublocales is simply the closure operator \(\clOp_{\os[L]}(\ARG)\) for the class \(\os[L]\) of open sublocales, in sense of \S\ref{par:inside-clat}. Furthermore, the classes of sublocales in \eqref{eq:fe-vs-fit-C} in our examples are closed under joins in \(\Sl(L)\). In fact they are of the form \(\J(\mathcal D)\) for some smaller subclass of sublocales \(\mathcal D\) of \(\Sl(L)\). Namely,

\begin{itemize}
    \item \(\Sb(L) = \J(\Slc(L))\) where \(\Slc(L)\) is the class of locally closed sublocales,
    \item \(\Sc(L) = \J(\cs[L])\) where \(\cs[L] = \{ \cs(a) \mid a\in L\}\) is the class of closed sublocales,
    \item \(\Sk(L) = \J(\Sco(C))\) where \(\Sco(L)\) is the class of compact sublocales, and
    \item \(\Ssp(L) = \J(\Sop(L))\) where \(\Sop(L)\) is the class of one-point sublocales, i.e.\ sublocales \(\bs(p)\) for some prime \(p \in L\).
\end{itemize}

This immediately puts us in the scope of Proposition~\ref{p:int-cl} about polarities, which reveals interesting new correspondences with the classes of sublocales defined dually.

\begin{theorem}
    \label{t:fit-vs-int}
    For a frame \(L\),
    \begin{enumerate}
        \item fittings of smooth sublocales correspond to locally closed interiors of fitted sublocales,\\ i.e.\ \(\fit[\Sb(L)] \cong \inOp_{\Slc(L)}[\So(L)]\),
        \item fittings of joins of closed sublocales correspond to closed interiors of fitted sublocales,\\ i.e.\ \(\fit[\Sc(L)] \cong \inOp_{\cs[L]}[\So(L)]\),
        \item fittings of joins of compact sublocales correspond to compact interiors of fitted sublocales, i.e.\ \(\fit[\Sk(L)] \cong \inOp_{\Sco(L)}[\So(L)]\), and
        \item fittings of spatial sublocales correspond to spatializations of fitted sublocales,\\ i.e.\ \(\fit[\Ssp(L)] \cong \inOp_{\Sop(L)}[\So(L)] = \spa[\So(L)] \).
    \end{enumerate}
    The isomorphisms in each of the cases are described as in \$\ref{par:iso-cl-int}.
\end{theorem}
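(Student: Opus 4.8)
The plan is to obtain all four isomorphisms simultaneously as a single instantiation of Proposition~\ref{p:int-cl}, carried out inside the complete lattice $C = \Sl(L)$ ordered by inclusion (so that meets are intersections and joins are sublocale joins). The whole argument reduces to choosing the sets $X$ and $Y$ correctly and identifying the closure and interior operators of \S\ref{par:inside-clat} with $\fit$ and the various spatialization-type operators.

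First I would fix the right-hand set to be $Y = \os[L]$, the class of open sublocales, and record two facts. On the one hand, the meet-closure $\M(\os[L])$ is exactly $\So(L)$: by definition, fitted sublocales are precisely the arbitrary intersections $\bigcap_{m\in M}\os(m)$ of open sublocales, and the empty meet yields $\os(1) = \Fll$, which is already open. On the other hand, as noted at the start of the section, the closure operator $\clOp_{\os[L]}$ is precisely the fitting operator, since $\clOp_{\os[L]}(S) = \bigcap\{\os(a) \mid S \sue \os(a)\} = \fit(S)$.

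Next, for each of the four rows I would take $X$ to be the generating subclass $\mathcal D$, namely $\Slc(L)$, $\cs[L]$, $\Sco(L)$, and $\Sop(L)$ respectively. The identities already recorded before the theorem, $\Sb(L) = \J(\Slc(L))$, $\Sc(L) = \J(\cs[L])$, $\Sk(L) = \J(\Sco(L))$, and $\Ssp(L) = \J(\Sop(L))$, then give $\J(X) = \mathcal C$ in each case, so that $\clOp_{\os[L]}[\J(X)] = \fit[\mathcal C]$. With these substitutions, Proposition~\ref{p:int-cl} delivers
\[
\inOp_{\mathcal D}[\So(L)] = \inOp_X[\M(Y)] \cong \GC(X, Y, \sue) \cong \clOp_Y[\J(X)] = \fit[\mathcal C],
\]
which is the asserted isomorphism. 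For item~(4) I would additionally observe that $\inOp_{\Sop(L)}$ coincides with $\spa$, directly from the definition of the spatialization operator as $S \mapsto \bigvee\{\bs(p) \mid \bs(p) \sue S\}$. Finally, the explicit description of each isomorphism — given by $\fit$ in one direction and by $\inOp_{\mathcal D}$ in the other — is read off verbatim from \S\ref{par:iso-cl-int}.

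The bookkeeping here is routine; the only point demanding genuine care is that $\Sl(L)$ is a \emph{coframe}, so one must stay attentive to the direction of the order when matching $\bigmee$ with $\bigcap$ and $\bigvee$ with the sublocale join, thereby confirming that $\clOp_{\os[L]}$ really is the closure operator $\fit$ and that $\inOp_{\mathcal D}$ really is the corresponding interior operator. Since Proposition~\ref{p:int-cl} assumes only that $C$ is a complete lattice, no frame/coframe hypothesis is needed, and the same argument applies uniformly to all four classes.
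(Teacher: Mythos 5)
Your proposal is correct and is essentially the paper's own argument: the paper proves this theorem precisely by observing that $\fit = \clOp_{\os[L]}$, that $\M(\os[L]) = \So(L)$, that each class is $\J(\mathcal D)$ for the generating class $\mathcal D$, and then invoking Proposition~\ref{p:int-cl} inside the complete lattice $\Sl(L)$, with $\inOp_{\Sop(L)} = \spa$ handling item~(4). Your write-up just makes these instantiations explicit, which matches the paper's (deliberately terse) treatment.
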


As we show in Section~\ref{s:subcolocales}, all these classes of sublocales form a coframe. Before we get to that we explore the universal properties that these classes admit, thanks to the theory of polarities.

\subsection{Universal properties}
\label{s:subloc-univ-prop}

Coming back to \eqref{eq:fe-vs-fit-C}, we see that Theorem~\ref{t:fit-vs-int} tells us that our example filter extensions can be equivalently specified by a class of sublocales instead.
Indeed, for a strongly exact filter \(F\sue L\) and \(a\in L\), \(\fts(F) \sue \os(a)\) iff \(a\in F\). This gives us that the mapping \(\fts\colon \Fi(L) \to \Sl(L)\), when restricted to strongly exact filters, is order reflective and also that, for a class \(\F \sue \SE(L)\),
\[L^\F \cong \GC(\fts[\F], \os[L], \sue).\]
Therefore, by Propositions~\ref{p:int-cl}, \ref{p:fext-concr} and Theorems \ref{t:Ex-iso}, \ref{t:Cl-iso}, \ref{t:SO-iso}, and \ref{t:CP-iso}, we obtain a corollary. 
\begin{corollary}
    For a frame \(L\), we have:
    \begin{align*}
        L^\Ex \cong L^\LCl &\cong \GC(\Slc(L), \os[L], \sue) 
        && L^\Cl \cong \GC(\cs[L], \os[L], \sue)\\
        L^\SO &\cong \GC( \Sco(L), \os[L], \sue) &
        L^\CP &\cong \GC(\Sop(L), \os[L], \sue)
    \end{align*}
\end{corollary}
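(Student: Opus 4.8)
The plan is to obtain all five isomorphisms uniformly from Proposition~\ref{p:int-cl}, fed by the sublocale correspondences already proved in Theorems~\ref{t:Ex-iso}, \ref{t:Cl-iso}, \ref{t:SO-iso} and \ref{t:CP-iso}. First I would record the single computation that does all the work. Take the complete lattice $C = \Sl(L)$, whose order is $\sue$, and set $Y = \os[L]$. Since the closure operator $\clOp_{\os[L]}$ coincides with $\fit$ (as observed at the start of this section), the $\clOp_Y[\J(X)]$-description in Proposition~\ref{p:int-cl} specialises, for any class $\mathcal D \sue \Sl(L)$, to
\[
    \GC(\mathcal D, \os[L], \sue) \ \cong\ \clOp_{\os[L]}[\J(\mathcal D)] \ =\ \fit[\J(\mathcal D)],
\]
where $\J(\mathcal D)$ is the closure of $\mathcal D$ under joins taken in $\Sl(L)$.

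Next I would feed in the generating classes listed just before Theorem~\ref{t:fit-vs-int}, namely $\Sb(L) = \J(\Slc(L))$, $\Sc(L) = \J(\cs[L])$, $\Sk(L) = \J(\Sco(L))$ and $\Ssp(L) = \J(\Sop(L))$. Substituting $\mathcal D = \Slc(L),\ \cs[L],\ \Sco(L),\ \Sop(L)$ into the displayed isomorphism turns its right-hand side into $\fit[\Sb(L)]$, $\fit[\Sc(L)]$, $\fit[\Sk(L)]$ and $\fit[\Ssp(L)]$, respectively. It then remains only to match each fitted class with the corresponding filter extension, which is precisely the content of Theorems~\ref{t:Ex-iso} ($L^\Ex \cong \fit[\Sb(L)]$), \ref{t:Cl-iso} ($L^\Cl \cong \R(L) \cong \fit[\Sc(L)]$), \ref{t:SO-iso} ($L^\SO \cong \fit[\Sk(L)]$) and \ref{t:CP-iso} ($L^\CP \cong \fit[\Ssp(L)]$). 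Composing the two isomorphisms yields each claimed identity. The extra isomorphism $L^\Ex \cong L^\LCl$ is cheap and separate: by Proposition~\ref{p:fext-concr} the two sides are $\Int(\Ex(L))$ and $\Int(\LCl(L))$, and these coincide because $\Int(\LCl(L)) = \J(\LCl(L)) = \Ex(L) = \Int(\Ex(L))$ by Lemma~\ref{l:ex-lcl} together with the fact that exact filters are closed under intersection (Lemma~\ref{l:se-e-j}).

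I do not expect any deep obstacle here; the only genuine care needed is bookkeeping of orders, so that the composite of several isomorphisms is an order isomorphism rather than merely a bijection. Concretely, Proposition~\ref{p:int-cl} delivers $\GC(\mathcal D, \os[L], \sue)$ as a subposet of $\Sl(L)$ under $\sue$; the correspondence of Theorem~\ref{t:SE-iso} sends the filter order $\sqleq$ to sublocale inclusion $\sue$; and $L^\F \cong \Int(\F)$ is taken in the $\sqleq$ order. Checking that these three orientations agree is what certifies that $L^\F \cong (\fit[\mathcal C], \sue) \cong \GC(\mathcal D, \os[L], \sue)$ is an isomorphism of complete lattices. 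A secondary point worth one line is that the $\J$ appearing in the decompositions $\Sb(L) = \J(\Slc(L))$, $\Sc(L) = \J(\cs[L])$, $\Sk(L) = \J(\Sco(L))$, $\Ssp(L) = \J(\Sop(L))$ is exactly closure under joins in $\Sl(L)$, hence the same $\J$ that appears in Proposition~\ref{p:int-cl}, so that the two displayed right-hand sides literally match.
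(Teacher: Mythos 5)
Your proof is correct and takes essentially the same route as the paper: the paper's own (one-sentence) proof cites exactly the ingredients you use, namely Proposition~\ref{p:int-cl} together with the identification $\clOp_{\os[L]} = \fit$, Proposition~\ref{p:fext-concr}, Theorems~\ref{t:Ex-iso}, \ref{t:Cl-iso}, \ref{t:SO-iso} and \ref{t:CP-iso}, and Lemma~\ref{l:ex-lcl} for $L^\Ex \cong L^\LCl$. The only difference is one of packaging: the paper first notes $L^\F \cong \GC(\fts[\F], \os[L], \sue)$ via order-reflectivity of $\fts$ on strongly exact filters and then applies these results, while you run the same isomorphisms in the opposite direction, starting from $\GC(\mathcal D, \os[L], \sue) \cong \fit[\J(\mathcal D)]$.
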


\noindent
Note that the first isomorphism \(L^\Ex \cong L^\LCl\) follows from Lemma~\ref{l:ex-lcl}.

As a result, we can reason about these classes of sublocales in terms of filters and vice versa. Take for example the filter extension \(L^\Ex\). The induced mapping
\begin{equation}
    L \xrightarrow{\ee\lemb} L^\Ex
    \label{eq:L-fe-Ex-univ}
\end{equation}
satisfies the universal properties \ref{ax:df} and \ref{ax:cf} for \(\F = \Ex(L)\) and, by Corollary~\ref{c:e-pres-e+se-meets}, preserves exact meets.
Now, by Propostion~\ref{p:int-cl}, we know that
\[L^\Ex \,\cong\, \GC(\Slc(L), \os[L], \sue) \,\cong\, \fit[\Sb(L)] \,\cong\, \inOp_{\Slc(L)}[\So(L)].\]
We can now look at the  concrete description of \(\lemb\) from \eqref{eq:L-fe-Ex-univ}, as given in Section~\ref{s:concrete-desc}, and compose it with the isomorphism from the proof of Proposition~\ref{p:int-cl} and also \S\ref{par:iso-cl-int}. We obtain maps
\[
    L \longrightarrow \inOp_{\Slc(L)}[\So(L)]
    \qtq{and}
    L \longrightarrow \fit[\Sb(L)]
\]
with the first one given by \(a\mapsto \bigvee\{ \cs(x) \cap \os(y) \mid \cs(x) \cap \os(y) \sue \os(a)\}\) and the second one as a fitting of the result of the first. As a consequence, the same properties that hold for \(e\) in \eqref{eq:L-fe-Ex-univ} also hold for these maps. In particular, these two maps preserve exact meets.

In a completely analogous way the induced mappings
\[
    L \to L^\SO
    \qquad
    L \to L^\Cl
    \qquad
    L \to L^\CP
    \qquad
    L \to L^\SE
\]
satisfy \ref{ax:df} and \ref{ax:cf} with \(\F\) taken to be the closed, completely prime and strongly exact filters, respectively.
Repeating the same procedure as above gives us that the mapping \(\os \colon L \to \So(L)\) obtained from \(L \to L^\SE\) preserves strongly exact meets by Corollary~\ref{c:e-pres-e+se-meets}. This is precisely the Technical Lemma in \cite{moshier2020exact}.

\subsection{Exact filters in the subset order}

Recall that \cite{ball2020exact} shows that \((\Ex(L), \sue) \cong \Sc(L)\) (note the use of the subset order for exact filters). By Corollary~\ref{c:GC-op} we can describe the universal property giving \((\Ex(L), \sue)\) in terms of the one describing \((\Ex(L),\sqleq)\):
\begin{align*}
    (\Ex(L),\sue)
    \cong (\Ex(L), \sqleq)\op
    \cong \GC(\Ex(L), L, \ni)\op
    \cong \GC(L, \Ex(L), \in)
\end{align*}
Note that the natural order of \(L\) and \(\Ex(L)\) in \((L, \Ex(L), \in)\) are dual to what we are used to. Indeed, if \(a \geq b\) and \(b\in F\) for some \(F \sue G\) from \(\Ex(L)\) then \(a\in G\). This way the mappings into \(\GC(L, \Ex(L), \in)\) stay monotone by Lemma~\ref{l:xe-ye-basics}.

Consequently, consider the following pair of maps
\[
    \begin{tikzcd}
        L\op \rar{\cs}
        & \Sc(L)
        & (\Ex(L),\sue) \lar[swap]{\ftsC}
    \end{tikzcd}
\]
where \(J\) sends an exact filter \(F\) to \(\bigvee_{f\in F} \cs(f)\). In fact, \(J\) is the isomorphism of \(\Sc(L)\) and \((\Ex(L),\sue)\) from \cite{ball2020exact}.
 Since \(\cs(a) \sue J(F)\) iff \(a\in F\) (cf.\ \cite{ball2020exact}) it is immediate that these maps play the role of the mapping of \(\xe[L]\) and \(\ye[\Ex(L)]\) into \(\GC(L, \Ex(L), \in)\) and satisfy the universal properties of Theorem~\ref{t:polarities}.

As a consequence of this we see that, by Lemma~\ref{l:xe-pres}, \(\cs(\ARG)\) transforms exact meets into joins, i.e.\ \(\bigvee_{a\in A} \cs(a) = \cs(\bigmee A)\) for any exact meet \(\bigmee A\).

\section{The Booleanization as a polarity}

Recall from Corollary~\ref{c:all-subcoloc} that
\begin{align}
    \R(L)
    \hookrightarrow \Ex(L)
    \hookrightarrow \SE(L)
    \hookrightarrow \Fi(L)
    \label{eq:top-row-fi}
\end{align}
and also
\begin{align}
    \fit[\Sc]
    \hookrightarrow \fit[\Sb(L)]
    \hookrightarrow \So(L)
    \label{eq:top-row-su}
\end{align}
are all subcolocale inclusions. Furthermore, Lemma~\ref{l:regular} establishes that \(\R(L) = \Int(\Cl(L))\) is the Booleanization of \(\Fi(L)\).
From these fact we immediately obtain the following.
\begin{theorem}\label{t:bool}
    We have
    \[\R(L) = \Int(\Cl(L)) = \B(\Ex(L)) = \B(\SE(L)) = \B(\Fi(L))\]
    and therefore also
    \[\fit[\Sc(L)] = \B(\fit[\Sb(L)]) = \B(\So(L)).\]
\end{theorem}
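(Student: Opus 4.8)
The plan is to isolate a single absorption principle for Booleanizations inside a coframe and then apply it twice, transporting the conclusion once across the isomorphism of Theorem~\ref{t:SE-iso}. The principle is: if \(S\) is a subcolocale of a coframe \(C\) and \(\B(C) \sue S\), then \(\B(S) = \B(C)\). To see this, recall that subcolocale inclusions are transitive, so \(\B(C) \sue S \sue C\) makes \(\B(C)\) a subcolocale of \(S\); being Boolean and contained in \(S\), it lies inside the maximum Boolean subcolocale of \(S\), giving \(\B(C) \sue \B(S)\). Conversely, \(\B(S)\) is a Boolean subcolocale of \(S\), hence (again by transitivity) of \(C\), so maximality of \(\B(C)\) inside \(C\) forces \(\B(S) \sue \B(C)\). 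The two inclusions yield the claimed equality.

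Next I would establish the first displayed line. By Lemma~\ref{l:regular} and the definition of regular filters we already have \(\R(L) = \Int(\Cl(L)) = \B(\Fi(L))\). Equation~\eqref{eq:top-row-fi} records that \(\R(L) \into \Ex(L) \into \SE(L) \into \Fi(L)\) are all subcolocale inclusions, so in particular \(\B(\Fi(L)) = \R(L)\) sits inside both \(\Ex(L)\) and \(\SE(L)\) as a subcolocale. Applying the absorption principle with \(C = \Fi(L)\) and \(S\) taken to be \(\Ex(L)\) and then \(\SE(L)\) immediately gives \(\B(\Ex(L)) = \B(\SE(L)) = \B(\Fi(L)) = \R(L) = \Int(\Cl(L))\), completing the chain.

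Finally, I would transport this conclusion along the isomorphism of Theorem~\ref{t:SE-iso}, which is an isomorphism of coframes \(\SE(L) \cong \So(L)\) restricting to \(\Ex(L) \cong \fit[\Sb(L)]\) and \(\R(L) \cong \fit[\Sc(L)]\) by Theorems~\ref{t:Ex-iso} and~\ref{t:Cl-iso}. Since the Booleanization is an intrinsic coframe invariant — namely the maximum Boolean subcolocale — it is preserved by any coframe isomorphism. Thus the equalities \(\B(\SE(L)) = \R(L)\) and \(\B(\Ex(L)) = \R(L)\) transport to \(\B(\So(L)) = \fit[\Sc(L)]\) and \(\B(\fit[\Sb(L)]) = \fit[\Sc(L)]\), which is exactly the second displayed line.

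The main obstacle is conceptual rather than computational: it is the absorption principle itself, and in particular the verification that \(\B(C) \sue S\) genuinely realises \(\B(C)\) as a subcolocale of \(S\) and that \(\B(S)\), viewed back inside \(C\), is still Boolean. Both points rest on the transitivity of subcolocale inclusions noted just before Corollary~\ref{c:all-subcoloc} and on the maximality of the Booleanization among Boolean sub(co)locales already invoked earlier in the paper; once these are in place, the remainder is bookkeeping.
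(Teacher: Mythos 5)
Your overall architecture --- isolate an absorption principle for Booleanizations, apply it along the chain \eqref{eq:top-row-fi}, then transport the result along the isomorphisms of Theorems~\ref{t:SE-iso}, \ref{t:Ex-iso} and~\ref{t:Cl-iso} --- is exactly the shape of the paper's proof, and the absorption principle itself is a true statement. The gap is in your proof of that principle: both directions rest on the claim that the Booleanization of a coframe is the \emph{maximum} Boolean subcolocale, i.e.\ that every Boolean sub(co)locale of \(C\) is contained in \(\B(C)\). That claim is false. In the frame picture, every one-point sublocale \(\bs(p)=\{p,1\}\) of a frame \(L\) (for \(p\) prime) is Boolean, but \(\bs(p)\sue\B(L)\) forces \(p\) to be a regular element; for instance no prime \(\mathbb{R}\sm\{x\}\) of \(\Omega(\mathbb{R})\) is regular, so \(\bs(\mathbb{R}\sm\{x\})\) is a Boolean sublocale not contained in the Booleanization. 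What is true --- and what the paper's earlier corollary means by ``maximal'' --- is only the weaker statement that a Boolean sub(co)locale \emph{containing} \(\B(C)\) must equal \(\B(C)\). That weaker fact cannot start your first direction (there you need \(\B(C)\sue\B(S)\) before any containment between the two is known), and your second direction quotes the same false maximum property, this time for \(C\).

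The missing ingredient is density (Isbell's density theorem): \(\B(C)\) is the \emph{least dense} sub(co)locale of \(C\), and under your hypothesis \(\B(C)\sue S\) every inclusion in sight is automatically dense. Concretely: \(S\) is dense in \(C\) because it contains the bottom of \(C\), hence \(\B(S)\) (dense in \(S\)) is dense in \(C\), and minimality of \(\B(C)\) gives \(\B(C)\sue\B(S)\); conversely \(\B(C)\) is a dense sub(co)locale of \(S\), so minimality of \(\B(S)\) in \(S\) gives \(\B(S)\sue\B(C)\). This is essentially how the paper argues, in the equivalent form: for any dense inclusion \(S\sue L\) one has \(\B(S)=\{x\to 0\mid x\in S\}\sue\{x\to 0\mid x\in L\}=\B(L)\), applied once to \(S\sue C\) and once to the dense inclusion \(\B(C)\sue S\). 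Once the principle is repaired this way, the rest of your argument --- the application to \(\R(L)\into\Ex(L)\into\SE(L)\into\Fi(L)\) using \(\R(L)=\B(\Fi(L))\), and the transport of the equalities along the restricted isomorphisms (for which no maximality is needed: the Booleanization is intrinsic, e.g.\ as the least dense subcolocale or by its defining formula, so any coframe isomorphism preserves it) --- goes through and coincides with the paper's proof.
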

\begin{proof}
    Recall that for any dense sublocale inclusion \(S \sue L\) it is the case that \(\B(S) \sue \B(L)\). If, furthermore, \(\B(L) \sue S\) then also \(\B(S) = \B(\B(S)) \sue \B(L)\). Consequently, from the subcolocale inclusions in \eqref{eq:top-row-fi} we obtain the first set of equalities since \(\R(L) = \B(\Fi(L))\), which holds by definition of \(\R(L)\).
 The second line of equalities is just a translation of the first line along the adjunction~\eqref{eq:fi-su-adj}.
\end{proof}

We also use the closure operator $\clOp(\ARG)$ on the lattice of all sublocales. For a sublocale $S$ it is defined as $\clOp(S)=\bigcap\{\cs(x):S\subseteq \cs(x)\}$, i.e.\ in our notation \(\clOp(S) = \clOp_{\cs[L]}(S)\). The following is a standard result of pointfree topology.

\begin{lemma}\label{l:cl-subloc}
    For a sublocale $S\subseteq L$ of a frame $L$, we have $\clOp(S)= \upset \bigwedge S$.
\end{lemma}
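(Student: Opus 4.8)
The plan is to prove the standard formula $\clOp(S) = \upset\bigwedge S$ directly from the definition $\clOp(S) = \bigcap\{\cs(x) \mid S \sue \cs(x)\}$, using the facts about closed sublocales recorded earlier in the excerpt, namely that $\cs(x) = \upset x$ and that closed sublocales satisfy $\bigcap_i \cs(x_i) = \cs(\bigvee_i x_i)$.

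First I would identify exactly which closed sublocales contain $S$. Since $\cs(x) = \upset x$, the containment $S \sue \cs(x)$ holds if and only if $x \leq s$ for every $s\in S$, which is equivalent to $x \leq \bigwedge S$. Thus the index set $\{x \mid S \sue \cs(x)\}$ is precisely $\downset\!\bigl(\bigwedge S\bigr) = \{x \mid x \leq \bigwedge S\}$. Writing $w = \bigwedge S$ for brevity, the closure becomes $\clOp(S) = \bigcap\{\cs(x) \mid x \leq w\}$.

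Next I would compute this intersection. Using the displayed identity $\bigcap_i \cs(x_i) = \cs(\bigvee_i x_i)$ for the family indexed by all $x \leq w$, the intersection equals $\cs\bigl(\bigvee\{x \mid x \leq w\}\bigr) = \cs(w)$, since the supremum of $\downset w$ is $w$ itself (as $w$ is its own largest element below $w$). Therefore $\clOp(S) = \cs(w) = \upset w = \upset\bigwedge S$, as required. One should note that $\bigwedge S$ does exist and lies in the relevant range because $S$ is a sublocale, hence closed under all meets by axiom (S1), so $\bigwedge S \in S \sue L$; this also guarantees the family $\{x \mid x \leq w\}$ is nonempty (it contains $w$), so the intersection formula applies without edge-case trouble.

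I expect no serious obstacle here; the only point deserving care is the bookkeeping around the empty-index or degenerate cases and the justification that the supremum of the down-set $\downset w$ is exactly $w$, which is immediate. The essential content is simply translating the defining intersection into a meet over a principal down-set and then invoking the closed-sublocale meet formula, so the argument is short and entirely computational once the correct index set is identified.
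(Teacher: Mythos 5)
Your proof is correct. The paper does not actually prove Lemma~\ref{l:cl-subloc} --- it is cited there as ``a standard result of pointfree topology'' --- and your argument is precisely the standard one: identifying $\{x \mid S \sue \cs(x)\}$ with $\downset\bigwedge S$ via $\cs(x)=\upset x$, then collapsing the intersection with the identity $\bigcap_i \cs(x_i) = \cs(\bigvee_i x_i)$. All steps check out (note only that the observation $\bigwedge S \in S$ from (S1) is harmless but not actually needed; the computation goes through with $\bigwedge S$ taken merely as an element of $L$).
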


Now we rephrase Booleanization of frames in terms of polarities.
The following result highlighting a symmetry between the Booleanization of $L$ and that of $\So(L)$. In the following, $\tot=\{(x,y)\in L\times L \mid x\vee y=1\}$ and $\con=\{(x,y)\in L\times L \mid x\mee y=0\}$.

\begin{corollary}
Let $L$ be a frame. We have
\begin{enumerate}
    \item $\GC(L^{op},L,\tot)\cong\GC(\cs[L],\os[L],\sue)\cong \fit[\Sc(L)]=\B(\So(L))$.
    \item $\GC(L,L^{op},\con)\cong \GC(\os[L],\cs[L],\sue)\cong \clOp[\os[L]]=\B(\cs[L]^{op})\cong \B(L)$.
\end{enumerate}
\end{corollary}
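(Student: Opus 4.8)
The plan is to reduce both items to Proposition~\ref{p:int-cl} applied inside the coframe $C = \Sl(L)$, once $\tot$ and $\con$ have been recognised as the incidence relations between closed and open sublocales. First I would record two elementary incidence facts. By \eqref{eq:cl-filt} we have $\stf(\cs(x)) = \{ y \mid x\vee y = 1\}$, so that $\cs(x) \sue \os(y)$ holds if and only if $x \vee y = 1$, i.e.\ iff $(x,y)\in\tot$. Dually, since $\os(x) = \{ x \to b \mid b\in L\}$ has least element $x^* = x\to 0$ and $\cs(y) = \upset y$, we get that $\os(x) \sue \cs(y)$ iff $y \leq x^*$ iff $x\wedge y = 0$, i.e.\ iff $(x,y)\in\con$.

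Next I would obtain the leftmost isomorphism of each line by relabelling the polarity. The assignments $\cs\colon L\op \to \cs[L]$ and $\os\colon L \to \os[L]$ are order isomorphisms onto their images, and by the incidence facts they carry $\tot$ to $\sue$ and $\con$ to $\sue$, respectively. Since $\GC(X,Y,Z)$ depends only on the underlying sets and the relation $Z$ up to bijective relabelling, this gives $\GC(L\op,L,\tot) \cong \GC(\cs[L],\os[L],\sue)$ for item~1 and $\GC(L,L\op,\con)\cong \GC(\os[L],\cs[L],\sue)$ for item~2; the placement of the $(\ARG)\op$'s is exactly what makes the relabelling maps order preserving.

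I would then apply Proposition~\ref{p:int-cl} with $C=\Sl(L)$. For item~1, taking $X=\cs[L]$ and $Y=\os[L]$ yields $\GC(\cs[L],\os[L],\sue) \cong \clOp_{\os[L]}[\J(\cs[L])] = \fit[\Sc(L)]$, using $\J(\cs[L]) = \Sc(L)$ and $\clOp_{\os[L]} = \fit$; the final equality $\fit[\Sc(L)] = \B(\So(L))$ is then Theorem~\ref{t:bool}. For item~2, taking $X=\os[L]$ and $Y=\cs[L]$ yields $\GC(\os[L],\cs[L],\sue) \cong \clOp_{\cs[L]}[\J(\os[L])] = \clOp[\os[L]]$, where $\J(\os[L]) = \os[L]$ because open sublocales are already closed under joins, and $\clOp_{\cs[L]} = \clOp$.

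Finally I would identify $\clOp[\os[L]]$ with the Booleanization. By Lemma~\ref{l:cl-subloc}, $\clOp(\os(a)) = \upset \bigwedge \os(a)$, and since $a\to(\ARG)$ preserves meets we have $\bigwedge \os(a) = a\to 0 = a^*$, hence $\clOp(\os(a)) = \cs(a^*)$. As $a$ ranges over $L$ the element $a^*=a\to 0$ ranges exactly over $\B(L)$, the regular elements by definition, so $\clOp[\os[L]] = \{ \cs(c) \mid c\in \B(L)\}$; and because $\cs\colon L \to \cs[L]\op$ is a frame isomorphism it carries $\B(L)$ onto $\B(\cs[L]\op)$, giving $\clOp[\os[L]] = \B(\cs[L]\op) \cong \B(L)$. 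I expect this last identification to be the main obstacle: one must keep track of the several opposite orders appearing in $L\op$, $\cs[L]\op$ and the $\sue$-order, and the cleanest way to reconcile the order mismatch hidden in the middle equality is to use that $\B(L)$ is Boolean, hence self-dual, so that every poset in sight is isomorphic to $\B(L)$ irrespective of orientation.
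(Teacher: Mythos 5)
Your proposal is correct and follows essentially the same route as the paper's own proof: the incidence facts $\cs(x)\sue\os(y)\iff(x,y)\in\tot$ and $\os(x)\sue\cs(y)\iff(x,y)\in\con$ justify the relabelling isomorphisms, Proposition~\ref{p:int-cl} applied in $\Sl(L)$ gives the middle isomorphisms, Theorem~\ref{t:bool} finishes item~1, and the computation $\clOp(\os(x))=\upset\bigwedge\os(x)=\cs(x\psc)$ via Lemma~\ref{l:cl-subloc} together with the isomorphism $L\cong\cs[L]\op$ finishes item~2. The only difference is presentational: you make explicit the order-reversal hidden in the set equality $\clOp[\os[L]]=\B(\cs[L]\op)$ and resolve it by the self-duality of the Boolean algebra $\B(L)$, a subtlety the paper passes over silently.
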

\begin{proof}
    For the first item, the first isomorphism holds because it is a basic fact that the collection of open sublocales is isomorphic to $L$ whereas that of closed sublocales are anti-isomorphic to it and \((a,b)\in\tot\) iff \(\cs(a) \sue \os(b)\). The second isomorphism holds by Proposition~\ref{p:int-cl}. Finally, the last equality holds by Theorem~\ref{t:bool}.

    For the second item, the first two isomorphisms are shown similarly as in the first except that we needed that \((a,b)\in \con\) iff \(\os(a) \sue \cs(b)\). For the equality $\clOp[\os[L]]=\B(\cs[L]^{op})$, we note that the regular elements of $\cs[L]^{op}$, i.e. the elements of its Booleanization, are the closed sublocales of the form $\cs(x\psc)$ for some $x\in L$, as $x\mapsto \cs(x)$ determines an isomorphism $L\cong\cs[L]^{op}$. But these are precisely the closures of open sublocales as for each $x\in L$ we have $\clOp(\os(x))= \upset \bigwedge \os(x)= \upset (x\psc) = \cs(x\psc)$, where we have used Lemma~\ref{l:cl-subloc} for the first equality. The last isomorphism is once again from $L\cong \cs[L]^{op}$.
\end{proof}

\section{Closed filters, open filters, sublocales and related facts}
\label{s:closed-open}

In this final section we would like to argue that the coframe \(\Fi(L)\), importantly in the \(\sqleq\) order, in some ways resembles the coframe \(\Sl(L)\) and is an interesting discretisation of \(L\) as well.

Recall from \eqref{eq:cl-filt} that closed filters arise from closed sublocales, i.e.\ they are of the form \(\stf(\cs(a)) = \{ x \mid x \vee a = 1\}\). Similarly, we define \df{open filters} to be the principal filters since we have that \(\stf(\os(a)) = \upset a\). To make reasoning with filters more natural we write \(\cf(a)\) and \(\of(a)\) for \(\stf(\cs(a))\) and \(\stf(\os(a))\) and, furthermore, \(\Emp = L\) and \(\{1\}\) for the smallest and largest filters, respectively.

Perhaps surprisingly, we can transfer some facts about open and closed sublocales to the setting of filters. For example, recall that a frame is subfit iff its open sublocales are joins of closed sublocales (see e.g.\ Proposition V.1.4 in \cite{pp2012book}). Analogously, we obtain a new characterisation of subfitness in terms of open and closed filters.

\begin{proposition}
    \label{p:subfit-joins-closed}
    A frame \(L\) is subfit if and only if, for any \(a\in L\),
    \[\of(a) = \bigsqcup \{ \cf(x) \mid \cf(x) \sqleq \of(a) \}.\]
\end{proposition}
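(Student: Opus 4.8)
The plan is to unwind both sides of the claimed identity into explicit subsets of $L$. Recall that $\of(a) = \upset a$ and $\cf(x) = \{ y \mid y \vee x = 1\}$, and that the filter order $\sqleq$ is reverse inclusion while the join $\bigsqcup$ is computed by intersection in $\Fi(L)$. First I would pin down exactly when a closed filter lies below a given open filter: since $\cf(x) \sqleq \of(a)$ unwinds to $\upset a \sue \cf(x)$, and since $y \geq a$ forces $y \vee x \geq a \vee x$, this condition is equivalent to the single equation $a \vee x = 1$ (the nontrivial direction tests the witness $y = a$).

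With this in hand, the second step is to compute the right-hand side. As $\bigsqcup$ is intersection, we obtain
\[
    \bigsqcup \{ \cf(x) \mid \cf(x) \sqleq \of(a) \}
    = \bigcap \{ \cf(x) \mid a \vee x = 1 \}
    = \{ y \in L \mid \forall x\in L,\ a \vee x = 1 \Rightarrow y \vee x = 1\}.
\]
The inclusion of $\of(a)$ into this set always holds, by the same monotonicity observation as in the first step: if $a \leq y$ and $a \vee x = 1$ then $y \vee x = 1$. Hence the claimed identity is equivalent to the reverse inclusion, namely that whenever $y$ satisfies $(a \vee x = 1 \Rightarrow y \vee x = 1)$ for all $x$, then $a \leq y$.

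Finally, I would compare this reverse inclusion with the definition of subfitness. For a fixed $a$, it states precisely: for all $y$, if $a \vee x = 1$ implies $y \vee x = 1$ for every $x$, then $a \leq y$ — which is the subfitness condition with $a$ as the first argument and $y$ as the second. Quantifying over all $a\in L$ therefore makes the family of identities (one per $a$) equivalent to subfitness in full. For the forward direction one simply instantiates subfitness at $b = y$; for the converse, given $a, b$ with $a \vee c = 1 \Rightarrow b \vee c = 1$ for all $c$, the identity at $a$ places $b$ in $\of(a) = \upset a$, yielding $a \leq b$.

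There is no serious obstacle here; the argument is essentially a dictionary translation between the lattice-of-filters language and the order-theoretic definition of subfitness. The only point demanding attention is the bookkeeping around the reverse-inclusion order $\sqleq$: because joins in $\Fi(L)$ are intersections, one must verify that the automatically valid half of the identity is $\of(a) \sue \bigcap \{\cf(x) \mid a \vee x = 1\}$ (equivalently $\bigsqcup \{\cf(x) \mid \cf(x) \sqleq \of(a)\} \sqleq \of(a)$), so that the genuinely subfitness-dependent content is isolated in the opposite set inclusion.
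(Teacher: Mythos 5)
Your proof is correct and follows essentially the same route as the paper's: both reduce $\cf(x) \sqleq \of(a)$ to the equation $a \vee x = 1$, observe that the inclusion $\of(a) \sue \bigcap\{\cf(x) \mid a \vee x = 1\}$ holds automatically, and identify the remaining inclusion $\bigcap\{\cf(x) \mid a \vee x = 1\} \sue \upset a$, quantified over all $a$, with the subfitness axiom. Your care with the reverse-inclusion order matches the paper's handling of the same point.
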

\begin{proof}
    First, observe that from the definitions \(\cf(x) \sqleq \of(a)\) iff \(\stf(\os(a)) \sue \stf(\cs(x))\) iff \(a \in \stf(\cs(x))\) iff \(a \vee x = 1\).
    Since the \(\sqsupseteq\) inequality in the above expression always holds, the right-hand side of the equivalence can be rewritten in the subset order as follows.
    \[
        \bigcap \{ \cf(x) \mid a \vee x = 1 \} \sue \upset a
    \]
    This inclusion can be further simplified to the first-order expression:
    \[
        \forall b\enspace
        (\forall x.\enspace a\vee x = 1 \implies b \vee x = 1)
        \implies
        a \leq b
    \]
    We see that this is precisely the axiom for subfitness.
\end{proof}

Since \(\Fi(L)\) is morally closer to \(\So(L)\) than to \(\Sl(L)\), closed filters could be rather thought of as fittings of closed sublocales. Therefore, we do not expect closed and open sublocales and closed and open behave exactly alike. Some usual properties still hold.

\begin{lemma}
    For any \(a\in L\), we have
    \,\( \of(a)\spp = \cf(a)\) \,and\, \( \of(a) \sqcup \cf(a) = \{1\}\).
\end{lemma}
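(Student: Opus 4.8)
The plan is to read off both identities directly from the definitions, using only the supplement computation already recorded in \eqref{eq:cl-supp-open}. Throughout I keep in mind that everything takes place in the coframe $(\Fi(L),\sqleq)$, whose top element is the principal filter $\{1\} = \upset 1$ and in which binary joins $\sqcup$ are computed as intersections. Recall also that open filters are exactly the principal filters, $\of(a) = \stf(\os(a)) = \upset a$, whereas $\cf(a) = \stf(\cs(a)) = \{x\in L \mid x\vee a = 1\}$.

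For the first identity I would simply substitute $\of(a) = \upset a$ into the supplement and invoke \eqref{eq:cl-supp-open}, which already computes $(\upset a)\spp = \{1\}\diff \upset a = \{x\in L \mid x\vee a = 1\}$. Since the right-hand side is by definition $\cf(a)$, this gives $\of(a)\spp = \cf(a)$ with no further work.

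For the second identity I would compute the join as the intersection $\of(a)\sqcup\cf(a) = \upset a \cap \{x \mid x\vee a = 1\}$. Any $x$ in this set satisfies $a\leq x$, whence $x\vee a = x$, together with $x\vee a = 1$; these force $x = 1$, and conversely $1$ lies in both filters, so the intersection is exactly $\{1\}$, the top of $\Fi(L)$. Alternatively, this drops out of the first identity via the general coframe fact that an element joins with its supplement to the top: feeding the trivial inequality $\of(a)\spp \sqleq \of(a)\spp$ into the difference adjunction $H\diff G \sqleq F \iff H \sqleq F\sqcup G$ (with $H = \{1\}$, $G = \of(a)$, $F = \of(a)\spp$) yields $\{1\} \sqleq \of(a)\spp \sqcup \of(a)$, and since $\{1\}$ is the top this forces $\of(a)\sqcup\cf(a) = \{1\}$.

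There is no genuine obstacle here; the computation is immediate once \eqref{eq:cl-supp-open} is in hand. The only point demanding care is the order convention: one must remember that $\Fi(L)$ carries the reversed order $\sqleq$, so that $\{1\}$ is the top and $\sqcup$ is intersection. With this in place, the two identities together exhibit $\cf(a)$ as the supplement of $\of(a)$ in $\Fi(L)$, echoing the way $\cs(a)$ is complementary to $\os(a)$ in $\Sl(L)$.
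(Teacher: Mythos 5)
Your proof is correct and follows essentially the same route as the paper: the first identity is read off from \eqref{eq:cl-supp-open} after substituting $\of(a)=\upset a$, and the second follows from the first via the difference adjunction $H \diff G \sqleq F \iff H \sqleq F \sqcup G$, exactly as in the paper. Your additional direct computation of the intersection $\upset a \cap \cf(a) = \{1\}$ is a harmless (and correct) bonus, but not a different method.
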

\begin{proof}
    The first equality follows directly from definitions and \eqref{eq:cl-supp-open}. The second is a consequence of the first as \(\{1\} \diff \of(a) = \cf(a)\) implies \(\{1\} \sqleq \of(a) \sqcup \cf(a)\).
\end{proof}

On the other hand, some of the expected facts about open and closed sublocales do not transfer. For example, we do not in general have that supplements of closed filters are open. On the other hand, requiring this property yields a new (and perhaps surprising) characterisation of subfitness.

\begin{proposition}
    \label{p:open-closed-surprises}
    For a frame \(L\),
\begin{enumerate}
    \item \(L\) is subfit iff, for every \(a\in L\), \(\cf(a)\spp = \of(a)\); and
    \item \(L\) is a Boolean algebra iff, for every \(a\in L\), \(\of(a) \sqcap \cf(a) = \Emp\).
\end{enumerate}
\end{proposition}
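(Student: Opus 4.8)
The plan is to reduce both equivalences to direct computations inside the coframe \(\Fi(L)\), using the explicit formulas for the difference \(\diff\), the supplement \(\spp\), and the meet \(\sqcap\), together with the fact established just above that \(\of(a)\spp = \cf(a)\). Throughout I keep in mind that the order is the reversed inclusion \(\sqleq\), so that \(\Emp = L\) is the bottom and \(\{1\}\) the top of \(\Fi(L)\), and that a filter equals \(\Emp = L\) precisely when it contains \(0\).

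For item (1) I would first compute \(\cf(a)\spp = \{1\} \diff \cf(a)\) from the difference formula, obtaining
\[
    \cf(a)\spp = \{ y \in L \mid \forall b \in L,\ a \vee b = 1 \implies y \vee b = 1\}.
\]
Since \(y \geq a\) forces \(y \vee b \geq a \vee b\), the inclusion \(\of(a) = \upset a \sue \cf(a)\spp\) always holds; equivalently this is the general coframe inequality \(\of(a) \sue (\of(a)\spp)\spp = \cf(a)\spp\). Hence \(\cf(a)\spp = \of(a)\) amounts to the reverse inclusion \(\cf(a)\spp \sue \of(a)\), which unwinds to: for all \(y\), if \(a \vee b = 1 \implies y \vee b = 1\) for every \(b\), then \(a \leq y\). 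This is exactly the subfitness axiom, so requiring it for all \(a\) is equivalent to subfitness. Alternatively, since \(\cf(a)\spp = (\of(a)\spp)\spp\), the condition says that each principal filter \(\upset a = \of(a)\) is a regular element of \(\Fi(L)\), i.e.\ lies in \(\R(L) = \B(\Fi(L))\); that all principal filters are regular is precisely the subfitness characterisation of Proposition~\ref{famouschar}.

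For item (2) I would expand \(\of(a) \sqcap \cf(a)\) using the meet formula \(F \sqcap G = \{ f \mee g \mid f \in F,\ g\in G\}\):
\[
    \of(a)\sqcap\cf(a) = \{ f \mee g \mid a \leq f,\ g \vee a = 1\}.
\]
This filter equals \(\Emp = L\) iff it contains \(0\), i.e.\ iff there are \(f \geq a\) and \(g\) with \(g \vee a = 1\) and \(f \mee g = 0\). I would then show this is equivalent to \(a\) being complemented: if such \(f,g\) exist then \(a \mee g \leq f \mee g = 0\), so \(g\) is a complement of \(a\); conversely a complement \(g\) of \(a\) works with the choice \(f = a\). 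Thus \(\of(a)\sqcap\cf(a) = \Emp\) holds for every \(a\) iff every element of \(L\) is complemented, which, as frames are distributive, is equivalent to \(L\) being a Boolean algebra.

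The computations are short; the only real care is the bookkeeping of the reversed order \(\sqleq\) and the roles of \(\Emp = L\) and \(\{1\}\), together with correctly instantiating the difference and supplement formulas. The one genuinely conceptual step is recognising the computed condition in (1) as the subfitness axiom verbatim — but this is the same move already used in the proof of Proposition~\ref{p:subfit-joins-closed}, so no new difficulty arises there.
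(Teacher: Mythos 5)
Your proof is correct and follows essentially the same route as the paper's: for (1) you compute \(\cf(a)\spp\) explicitly, observe that the inclusion \(\of(a) \sue \cf(a)\spp\) is automatic (you get it by monotonicity or the double-supplement inequality, where the paper uses adjointness with \(\sqcup\)), and identify the remaining inclusion verbatim with the subfitness axiom; for (2) you, like the paper, reduce \(\of(a)\sqcap\cf(a) = \Emp\) to \(0\) lying in the meet-filter and check this is exactly complementedness of \(a\). Your side remark that (1) can also be read as ``all principal filters are regular'' and closed off by Proposition~\ref{famouschar} is a nice observation, consistent with the paper's own characterisation of subfitness.
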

\begin{proof}
    For (1), as in Proposition~2.9.2 of \cite{ball2020exact}, observe that the supplement \(F\spp\) of any filter \(F\) in \(\Fi(L)\) is the filter \(\{ b \mid \forall x\in F.\ b \vee x = 1 \}\) since \(F\spp = \{1\} \diff F\) computed in the subset order is precisely as \(\bigvee \{ \upset b \mid \upset b \cap F \sue \{1\}\}\). Furthermore, \(\cf(a)\spp = \{1\} \diff \cf(a) \sqleq \of(a)\) holds automatically since, from adjointness with \(\sqcup\), it is equivalent to \(\{1\} \sqleq \cf(a) \sqcup \of(a)\) which in turn is equivalent to \(\cf(a) = \{1\} \diff \of(a) \sqleq \cf(a)\).

    Finally, the non-automatic inequality \(\of(a) \sqleq \cf(a)\spp\) translates as
    \[ \{ b \mid \forall x.\ x \vee a =1 \implies b \vee x = 1\} = \{b \mid \forall x\in \cf(a).\ b \vee x = 1 \} \sue \upset a. \]
    Clearly, requiring this to hold for every \(a\) is precisely subfitness.

    For (2), observe that \(\of(a) \sqcap \cf(a) \sqleq \Emp\) iff \(0 \in \of(a) \cap \cf(a)\) iff there are some \(x, a'\) such that \(x\vee a = 1\) and \(a' \geq a\) and we have \(x \mee a' = 0\). Clearly, the last statement is equivalent to \(a\) being complemented.
\end{proof}

\phantomsection  %
\addcontentsline{toc}{section}{References}
\bibliographystyle{alpha}
\bibliography{refs}

\begin{thebibliography}{BMP20}

\bibitem[APP23]{arrieta2023spatial}
Igor Arrieta, Jorge Picado, and Ale\v{s} Pultr.
\newblock Notes on the spatial part of a frame.
\newblock {\em Categories and General Algebraic Structures with Applications},
  pages~--, 2023.

\bibitem[Arr21]{arrieta2021complemented}
Igor Arrieta.
\newblock On joins of complemented sublocales.
\newblock {\em Algebra universalis}, 83(1):1--11, 2021.

\bibitem[Bir48]{birkhoff48}
Garrett Birkhoff.
\newblock {\em Lattice Theory}.
\newblock American Mathematical Society, New York, third edition edition, 1948.

\bibitem[BMP20]{ball2020exact}
Richard~N. Ball, M.~Andrew Moshier, and Ale\v{s} Pultr.
\newblock Exact filters and joins of closed sublocales.
\newblock {\em Applied Categorical Structures}, 28:655--667, 2020.

\bibitem[Esc03]{escardo2003joins}
Mart{\'\i}n~H. Escard{\'o}.
\newblock Joins in the frame of nuclei.
\newblock {\em Applied Categorical Structures}, 11(2):117--124, 2003.

\bibitem[Geh06]{gehrke06}
Mai Gehrke.
\newblock Generalized kripke frames.
\newblock {\em Studia Logica}, 84(2):241--275, 2006.

\bibitem[Geh18]{gehrke2018canonical}
Mai Gehrke.
\newblock Canonical extensions: an algebraic approach to stone duality.
\newblock {\em Algebra universalis}, 79(3), 2018.

\bibitem[GF]{GFbook}
Mai Gehrke and Wesley Fussner.
\newblock Canonical extensions quickly.
\newblock (Under preparation).

\bibitem[GH01]{gehrke2001bounded}
Mai Gehrke and John Harding.
\newblock Bounded lattice expansions.
\newblock {\em Journal of Algebra}, 238(1):345--371, 2001.

\bibitem[GJ04]{gehrke2004bounded}
Mai Gehrke and Bjarni J{\'o}nsson.
\newblock Bounded distributive lattice expansions.
\newblock {\em Mathematica Scandinavica}, 94(1):13--45, 2004.

\bibitem[HM81]{hofmann2006local}
Karl~H. Hofmann and Michael~W. Mislove.
\newblock Local compactness and continuous lattices.
\newblock In {\em Continuous Lattices: Proceedings of the Conference on
  Topological and Categorical Aspects of Continuous Lattices, Bremen 1979},
  volume 871 of {\em Lecture Notes in Mathematics}, pages 209--248. Springer
  Verlag, 1981.

\bibitem[Jak20]{jakl20}
Tom{\'a}{\v{s}} Jakl.
\newblock Canonical extensions of locally compact frames.
\newblock {\em Topology and its Applications}, 273:106976, 2020.

\bibitem[Joh85]{johnstone1985vietoris}
Peter~T. Johnstone.
\newblock Vietoris locales and localic semilattices.
\newblock {\em Continuous lattices and their applications}, 101:155--180, 1985.
\newblock Pure and Applied Mathematics Vol. 101.

\bibitem[JT51]{jonssontarski1951boolean}
Bjarni J{\'o}nsson and Alfred Tarski.
\newblock Boolean algebras with operators, {I}.
\newblock {\em American Journal of Mathematics}, 73:891--939, 1951.

\bibitem[JT52]{jonssontarski1952boolean}
Bjarni J{\'o}nsson and Alfred Tarski.
\newblock Boolean algebras with operators, {II}.
\newblock {\em American Journal of Mathematics}, 74:127--162, 1952.

\bibitem[MPP22]{moshier2022some}
M.~Andrew Moshier, Jorge Picado, and Ale{\v{s}} Pultr.
\newblock Some general aspects of exactness and strong exactness of meets.
\newblock {\em Topology and its Applications}, 309:107906, 2022.

\bibitem[MPS20]{moshier2020exact}
M.~Andrew Moshier, Ale\v{s} Pultr, and Anna~Laura Suarez.
\newblock Exact and strongly exact filters.
\newblock {\em Applied Categorical Structures}, 28(6):907--920, 2020.

\bibitem[PP12]{pp2012book}
Jorge Picado and Ale\v{s} Pultr.
\newblock {\em Frames and Locales: Topology without points}, volume~28 of {\em
  Frontiers in Mathematics}.
\newblock Springer Basel, 2012.

\bibitem[PP17]{picado2017boolean}
Jorge Picado and Ale{\v{s}} Pultr.
\newblock A boolean extension of a frame and a representation of discontinuity.
\newblock {\em Quaestiones Mathematicae}, 40(8):1111--1125, 2017.

\bibitem[PP21a]{pp2021notes}
Jorge Picado and Ale{\v{s}} Pultr.
\newblock Notes on point-free topology.
\newblock {\em New Perspectives in Algebra, Topology and Categories: Summer
  School, Louvain-la-Neuve, Belgium, September 12-15, 2018 and September 11-14,
  2019}, pages 173--223, 2021.

\bibitem[PP21b]{pp2021book2}
Jorge Picado and Ale{\v{s}} Pultr.
\newblock {\em Separation in point-free topology}.
\newblock Birkh\"{a}user/Springer Cham, 2021.

\bibitem[PPT19]{picado2019Sc}
Jorge Picado, Ale\v{s} Pultr, and Anna Tozzi.
\newblock Joins of closed sublocales.
\newblock {\em Houston Journal of Mathematics}, 45:21--38, 2019.

\bibitem[Pri84]{priestley1984ordered}
H.~A. Priestley.
\newblock Ordered sets and duality for distributive lattices.
\newblock In Maurice Pouzet and Denis Richard, editors, {\em Orders:
  Description and Roles Ordres: Description et Rôles}, volume~99 of {\em
  North-Holland Mathematics Studies}, pages 39--60. North-Holland, 1984.

\bibitem[Vic97]{vickers1997constructive}
Steven Vickers.
\newblock Constructive points of powerlocales.
\newblock In {\em Mathematical Proceedings of the Cambridge Philosophical
  Society}, volume 122, pages 207--222. Cambridge University Press, 1997.

\bibitem[Wil82]{wille82}
Rudolf Wille.
\newblock Restructuring lattice theory: an approach based on hierarchies of
  concepts.
\newblock In {\em Ordered sets ({B}anff, {A}lta., 1981)}, volume~83 of {\em
  NATO Adv. Study Inst. Ser. C: Math. Phys. Sci.}, pages 445--470. Reidel,
  Dordrecht-Boston, Mass., 1982.

\end{thebibliography}

\bigskip
\paragraph*{Acknowledgement.}

Tom\'a\v s Jakl was funded by the EXPRO project 20-31529X awarded by the Czech Science Foundation and was also supported by the Institute of Mathematics, Czech Academy of Sciences (RVO 67985840).

\end{document}